\documentclass[a4paper,12pt%
]{amsart}

\usepackage{amsmath, amssymb, amsthm, amscd, tikz}
\usepackage{graphicx}
\usepackage{enumerate}
\usepackage{caption}
\usepackage{subcaption}
\usepackage{comment}
\usepackage[all,cmtip]{xy}

\usepackage{geometry}

\usepackage{a4wide}

\setlength{\parskip}{1.2ex}
\setlength{\parindent}{0em}

\usetikzlibrary{arrows}

\usepackage{algorithm}
\usepackage{algpseudocode}

\usepackage[english]{babel}
\usepackage[latin1]{inputenc}
\usepackage[all,cmtip]{xy}
\usepackage{times}
\usepackage{cancel}

\newcounter{theorem}
\newtheorem{thm}[theorem]{Theorem}
\newtheorem{lemma}[theorem]{Lemma}
\newtheorem{prop}[theorem]{Proposition}
\newtheorem{cor}[theorem]{Corollary}
\newtheorem{defn}[theorem]{Definition}

\newtheorem{question}[theorem]{Question}

\theoremstyle{remark}
\newtheorem*{remark*}{Remark}
\newtheorem{remark}[theorem]{Remark}
\newtheorem{example}[theorem]{Example}

\numberwithin{equation}{section}
\numberwithin{theorem}{section}

\newcommand{\defemph}{\emph}


\newcommand{\R}{\mathbb{R}}

\newcommand{\Z}{\mathbb{Z}}
\newcommand{\N}{\mathbb{N}}

\newcommand{\sub}{\varphi} 
\newcommand{\TS}{\Omega} 
\newcommand{\Al}{\mathcal{A}} 
\newcommand{\Be}{\mathcal{B}} 
\newcommand{\Ga}{\mathcal{C}} 
\newcommand{\Ze}{\mathcal{Z}} 


\renewcommand{\setminus}{\backslash}
\renewcommand{\emptyset}{\varnothing}
\newcommand{\Cech}{\v{C}ech{} }

\title[Beyond primitivity]{Beyond primitivity for one-dimensional substitution subshifts and tiling spaces}

\author{Gregory R. Maloney}
\address{Newcastle University}
\thanks{}

\author{Dan Rust}
\address{Bielefeld University}
\thanks{}

\begin{document}

\subjclass[2010]{Primary: 37B10, 55N05.  
Secondary: 54H20, 37B50, 52C23}
\keywords{Minimality, Primitivity, Substitution}
\date{\today}
\thanks{The authors were partly supported by Grant IN-2013-045 from the Leverhulme Trust for International Network.}

\begin{abstract}
We study the topology and dynamics of subshifts and tiling spaces associated to non-primitive substitutions in one dimension.  We identify a property of a substitution, which we call tameness, in the presence of which most of the possible pathological behaviours of non-minimal substitutions cannot occur.  We find a characterisation of tameness, and use this to prove a slightly stronger version of a result of Durand, which says that the subshift of a minimal substitution is topologically conjugate to the subshift of a primitive substitution.  

We then extend to the non-minimal setting a result obtained by Anderson and Putnam for primitive substitutions, which says that a substitution tiling space is homeomorphic to an inverse limit of a certain CW complex under a self-map induced by the substitution.  We use this result to explore the structure of the lattice of closed invariant subspaces and quotients of a substitution tiling space, for which we compute cohomological invariants that are stronger that the \Cech cohomology of the tiling space alone.  
\end{abstract}

\maketitle	
	
	\section{Preliminaries}\label{SEC:prelims}
	\subsection{Outline}\label{SEC:outline}
	The goal of this work is to study one-dimensional tiling spaces arising from non-primitive substitution rules, in terms of the topology, dynamics, and cohomology.
This study naturally divides into two cases: the case where the tiling space is minimal, and the case where it is non-minimal.  
The minimal case is treated in Section \ref{SEC:minimal}, where we identify a property of a substitution, which we call \defemph{tameness}, in the presence of which most of the possible pathological behaviours of non-minimal substitutions cannot occur. In particular, all aperiodic substitutions will be seen to be tame.  By aperiodic, we mean that the subshift of the substitution has no periodic orbits.
The first main result is Theorem \ref{THM:wild-characterisation}, which gives a characterisation of tameness.  
This theorem is used to prove the following result.  
\newtheorem*{THM:main}{Theorem \ref{THM:main}}
\begin{THM:main}
Let $\sub$ be a minimal substitution with non-empty minimal subshift $X_\sub$.  
There exists an alphabet $\Ze$ and a primitive substitution $\theta$ on $\Ze$ such that $X_\theta$ is topologically conjugate to $X_\sub$.
\end{THM:main}

This is similar to, but slightly stronger than, a result from the section on Open problems and perspectives (Section 6.2) of \cite{D:main-result}. 

Examples and applications based on this section are then given in Section \ref{SEC:examples}. 

The non-minimal case is treated in Sections \ref{SEC:non-minimal}, \ref{SEC:CISs} and \ref{SEC:non-min-examples}.  
Non-minimal substitution tiling spaces fall outside the scope of much of the existing literature, which typically deals only with primitive systems.  Nevertheless a theory of non-minimal substitutions is important, even to the study of primitive substitutions, because there are invariants of primitive substitution tiling spaces that are themselves non-minimal substitution tiling spaces.  These invariants originally appeared in \cite{BD:proximal}, and are discussed in more detail in Examples \ref{EX:proximal} and \ref{EX:asymptotic}.  

The main result of Section \ref{SEC:non-minimal} is the following theorem, which is an extension to the non-minimal setting of results obtained by Anderson and Putnam for primitive substitutions \cite{AP}, although their results apply in arbitrary dimension.  
\theoremstyle{theorem}	
\newtheorem*{THM:main2}{Theorem \ref{THM:homeo}}
\begin{THM:main2}
Let $\sub$ be a tame recognisable substitution. There exists a complex $\Gamma$ and a map $f\colon \Gamma \to \Gamma$ such that there is a homeomorphism $h \colon \TS_\sub \to \varprojlim(\Gamma,f)$.
\end{THM:main2}

Section \ref{SEC:CISs} is devoted to building a structure theorem of non-minimal tiling spaces in terms of their closed shift-invariant subspaces. In particular, we identify a correspondence between such subspaces and subcomplexes of the complex $\Gamma$ above. The subspaces are found to be homeomorphic to an inverse limit of self-maps acting on the corresponding subcomplex of $\Gamma$.

Examples are given in Section \ref{SEC:non-min-examples} to justify the level of care that needs to be taken in building the machinery, and to give an exposition of how the machinery is put to use when performing calculations.

\subsection{Subshifts and Tiling Spaces}\label{SUBSEC:subshifts}

		Let $\Al$ be a finite alphabet and for natural numbers $n$, let $\Al^n$ be the set of words of length $n$ using symbols from $\Al$. We denote the length of the word $u = u_1\ldots u_l$ by $|u| = l$. By convention, $\Al^0 = \{\epsilon\}$ where $\epsilon$ is the empty word and $|\epsilon|=0$. Denote the union of the positive-length words by $\Al^+ = \bigcup_{n \geq 1} \Al^n$. If the empty word $\epsilon$ is also included, then we denote the union $\Al^+ \cup \{\epsilon\}$ by $\Al^\ast$. This set $\Al^*$ forms a free monoid under concatenation of words.

		A \emph{substitution} $\sub$ on $\Al$ is a function $\sub \colon \Al \to \Al^+$. We can extend the substitution $\sub$ in a natural way to a morphism $\sub \colon \Al^* \to \Al^*$ given, for a word $u = u_1\ldots u_n \in \Al^n$, by setting $\sub(u) = \sub(u_1) \ldots \sub(u_n)$. The symbol $w_i$ denotes the label assigned to the $i$th component of the bi-infinite sequence $w \in \Al^\Z$. We may further extend the definition of a substitution to bi-infinite sequences $\sub \colon \Al^\Z \to \Al^\Z$. For a bi-infinite sequence $w \in \Al^{\Z}$, with $w = \ldots w_{-2} w_{-1} \cdot w_0 w_1 w_2 \ldots$ we set $$\sub(w) = \ldots \sub(w_{-2}) \sub(w_{-1}) \cdot \sub(w_0) \sub(w_1) \sub(w_2) \ldots$$ with the dot $\cdot$ representing the separator of the $(-1)$st and $0$th component of the respective sequences.
		
		For a substitution $\sub \colon \Al \to \Al^+$ on an alphabet $\Al = \{a^1, a^2, \ldots, a^l\}$, there is an associated substitution matrix $M_{\sub}$ of dimension $l\times l$ given by setting $m_{ij}$, the $i,j$ entry of $M_{\sub}$, to be the number of times that the letter $a^i$ appears in the word $\sub(a^j)$.

		A substitution $\sub$ is called \emph{primitive} if there exists a positive natural number $p$ such that the matrix $M_{\sub}^p$ has strictly positive entries. Equivalently, if there exists a positive natural number $p$ such that for all $a,a'\in\Al$ the letter $a'$ appears in the word $\sub^p(a)$.

		For words $u, v \in \Al^*$, we write $u \subset v$ to mean $u$ is a subword of $v$, and $u \subsetneq v$ to mean $u$ is a proper subword of $v$. For a bi-infinite word $w \in \Al^\Z$, we similarly write $u \subset w$ to mean $u$ is a subword of $w$.

			Let $\sub \colon \Al \to \Al^+$ be a substitution. We say a word $u \in \Al^\ast$ is \emph{admitted} by the substitution $\sub$ if there exists a letter $a\in\Al$ and a natural number $k\geq 0$ such that $u \subset \sub^k(a)$ and denote by $\mathcal{L}^n \subset \Al^n$ the set of all words of length $n$ which are admitted by $\sub$. Our convention is that the empty word $\epsilon$ is admitted by all substitutions. We form the \emph{language} of $\sub$ by taking the set of all admitted words $\mathcal{L} = \bigcup_{n\geq 0} \mathcal{L}^n$.

			We say a bi-infinite sequence $w \in \Al^\Z$ is \emph{admitted} by $\sub$ if every subword of $w$ is admitted by $\sub$ and denote by $X_\sub$ the set of all bi-infinite sequences admitted by $\sub$. The set $X_\sub$ has a natural (metric) topology inherited from the product topology on $\Al^\Z$ and a natural shift map $\sigma \colon X_\sub \to X_\sub$ given by $\sigma(w)_i= w_{i+1}$. We call the pair $(X_\sub, \sigma)$ the \emph{subshift} associated to $\sub$ and we will often abbreviate the pair to just $X_{\sub}$ when the context is clear.

			We remark that it is not necessarily the case that every word in the language of a substitution appears as the subword of a sequence in the subshift - for example $ab$ is in the language of the substitution $\sub \colon a \mapsto ab,\: b \mapsto b$, but the subshift for this substitution is the single periodic sequence $\ldots bbb \ldots$ which does not contain $ab$ as a subword.
			
			We say a word $u$ is \emph{legal} if it appears as a subword of a sequence of the subshift for the substitution $\sub$. Then the set $\hat{\mathcal{L}}_\sub$ of legal words for $\sub$ is a subset of the language $\mathcal{L}_\sub$. If $\hat{\mathcal{L}}_\sub = \mathcal{L}_\sub$ then we say that $\sub$ is an \emph{admissible} substitution---this definition is borrowed from \cite{CS:non-primitive-invariant-measures} where they give the equivalent definition that $\sub$ is admissible if every $a \in \Al$ is a legal letter. Every primitive substitution is admissible.
	Some of the results of this work would be simplified if we chose to focus only on admissible substitutions, however this will not be an assumption that we make.
	
	Let $L$ be a non-empty subset of the subshift $X_\sub$.  
If, for every point $w$ in $L$, it is true that $L = \overline {\{\sigma^i(w)\}}_{i \in \Z}$, the orbit closure of $w$, then $L$ is called a \emph{minimal component} of $X_\sub$.  
If the subshift $X_\sub$ is a minimal component of itself, then $\sub$ is called a minimal substitution and $X_\sub$ is called a \emph{minimal} subshift, otherwise $\sub$ and $X_\sub$ are called \emph{non-minimal}.

For a primitive substitution, any admitted word is also legal.
If $u$ and $v$ are words, let us use the notation $|v|_u$ to denote the number of occurrences of $u$ as a subword of $v$.  
A subshift is called \defemph{linearly recurrent} if there exists a natural number $C \in \N$ such that, for all legal words $u$ and $v$, if $|v| > C|u|$, then $|v|_u \geq 1$.  

One fact that will play an important role in this section is the following, which was proved in \cite{DL:linearly-recurrent}.
\begin{thm}[Damanik--Lenz]\label{THM:linearly-recurrent}
Let $\sub$ be a substitution on $\Al$.  
The subshift $X_\sub$ is minimal if and only if it is linearly recurrent.
\end{thm}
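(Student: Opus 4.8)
I would prove the two implications separately, since one is soft and the other carries all the weight.

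\emph{Linear recurrence $\Rightarrow$ minimality.} For this half I would use nothing about substitutions; it works for any nonempty subshift. Assume $X_\sub$ is linearly recurrent with constant $C$ and fix $w,w' \in X_\sub$. For each $n$ the central block $u = w'_{-n}\cdots w'_{n}$ is legal, and any block $v$ of $w$ of length just over $C|u|$ is a legal word with $|v| > C|u|$; linear recurrence forces $|v|_u \ge 1$, so $u$ occurs in $w$ and some $\sigma^i(w)$ agrees with $w'$ on $[-n,n]$. Letting $n\to\infty$ places $w'$ in the orbit closure of $w$, and since $w,w'$ are arbitrary, every orbit is dense and $X_\sub$ is minimal.

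\emph{Minimality $\Rightarrow$ linear recurrence.} Here the substitution structure is indispensable: by Theorem~\ref{THM:linearly-recurrent}'s own statement minimality is equivalent to every legal word recurring with \emph{bounded} gaps, and the task is to upgrade ``bounded'' to ``linear in the length.'' If $X_\sub$ is finite it is a union of periodic orbits and the claim is immediate, so I assume $X_\sub$ infinite, hence aperiodic. I would partition the legal letters into \emph{growing} letters (those with $|\sub^n(a)|\to\infty$) and \emph{bounded} ones. The first step is to squeeze a primitivity statement out of bare minimality: for any two growing legal letters $a,b$ there is an $n$ with $a \subset \sub^n(b)$, since otherwise the blocks $\sub^n(b)$, centred and passed to a limit, produce a sequence of $X_\sub$ omitting the legal letter $a$, contradicting minimality. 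Thus the growing sub-alphabet is irreducible, and Perron--Frobenius supplies $\lambda > 1$ and constants $0<c_0\le C_0$ with $c_0\lambda^n \le |\sub^n(a)| \le C_0\lambda^n$ for every growing $a$, so the growing block lengths at each scale are uniformly comparable; after passing to a power if necessary this also yields a uniform $p_0$ with $s\subset\sub^{p_0}(c)$ for every growing $c$ and every legal word $s$ of bounded length. A second consequence of minimality is that the growing letters occur \emph{syndetically}, with a uniform gap $g_0$, in every sequence of $X_\sub$.

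Now I would assemble the estimate, using recognisability of the aperiodic minimal substitution (Moss\'e's theorem) to decompose every legal sequence into level-$n$ blocks $\sub^n(\cdot)$. Given a legal word $u$, let $n_0$ be minimal with $\min\{\,|\sub^{n_0}(a)| : a \text{ growing}\,\} \ge |u|$. Desubstituting, $u$ meets only boundedly many level-$n_0$ blocks: at most a couple of growing-letter blocks, each already of length $\ge|u|$, together with the intervening runs of bounded-letter blocks, and those runs are uniformly short because bounded letters have bounded images while growing letters are $g_0$-syndetic. Hence $u \subset \sub^{n_0}(s)$ for a legal word $s$ of length bounded independently of $u$, and there are only finitely many such $s$, so $s\subset\sub^{p_0}(c)$ and therefore $u \subset \sub^{n_0+p_0}(c)$ for \emph{every} growing letter $c$. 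Writing $m=n_0+p_0$ and using the length comparability together with $\min\{|\sub^{n_0-1}(a)|\} < |u|$, one gets $\max_a|\sub^{m}(a)| \le K|u|$ for a universal $K$. Finally, any legal window of length at least $(g_0+2)\max_a|\sub^m(a)|$ contains at least $g_0$ consecutive complete level-$m$ blocks, one of which by syndeticity belongs to a growing letter $c$, and that block contains $u$. Thus every legal word of length exceeding $(g_0+2)K\,|u|$ contains $u$, which is linear recurrence.

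The main obstacle is exactly this second direction, and within it the passage from qualitative to quantitative recurrence. Minimality only hands over the soft facts that legal letters recur and that omitted letters are forbidden; converting these into the scale-invariant bounds $c_0\lambda^n \le |\sub^n(a)| \le C_0\lambda^n$ requires isolating the growing sub-alphabet, proving its irreducibility from minimality, and invoking Perron--Frobenius, while the bounded letters, the possible imprimitivity (periodicity) of the growing sub-alphabet, and the straddling of block boundaries must all be controlled carefully enough that the final gap bound comes out proportional to $|u|$ rather than merely finite.
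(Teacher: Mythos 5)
Your overall architecture is sound, and a line-by-line comparison with the paper is not possible here: the paper imports this theorem from Damanik--Lenz \cite{DL:linearly-recurrent} without proof. Judged on its own, the soft direction (linear recurrence $\Rightarrow$ minimality) is complete and correct. In the hard direction, most of your skeleton is right: dispatching the finite case; the centred-limit omission argument for irreducibility of the expanding sub-alphabet (which legitimately exploits the paper's language-based definition of $X_\sub$, so that limits of admitted words $\sub^n(b)$ lie in $X_\sub$ even when those words are not legal); Perron--Frobenius comparability $c_0\lambda^n \le |\sub^n(a)| \le C_0\lambda^n$; the count showing $u \subset \sub^{n_0}(s)$ with $|s|$ universally bounded; and the final syndeticity step. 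Two repairs are minor. First, Moss\'e's recognisability theorem is stated for primitive substitutions; for general aperiodic ones you would need Bezuglyi--Kwiatkowski--Medynets \cite{BKM:recognisable} --- but in fact you need no recognisability at all, since only the \emph{existence} (not uniqueness) of level-$n$ decompositions is used, and that follows from minimality: the set $\{\sigma^k\sub(w) : w \in X_\sub,\ 0 \le k < |\sub(w_0)|\}$ is a nonempty, compact, shift-invariant subset of $X_\sub$, hence equals $X_\sub$. Second, ``passing to a power'' to obtain the uniform $p_0$ is shaky --- a power of an imprimitive irreducible matrix is reducible --- but $p_0$ is obtained more robustly from qualitative uniform recurrence (minimality plus compactness) applied to the finitely many legal words $s$ with $|s| \le m_0$, combined with your lower bound $|\sub^{p_0}(c)| \ge c_0\lambda^{p_0}$ and the fact that $\sub^{p_0}(c)$ is legal whenever $c$ is.

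The genuine gap is the sentence asserting that ``a second consequence of minimality is that the growing letters occur syndetically, with a uniform gap $g_0$, in every sequence of $X_\sub$.'' This is not a soft consequence of minimality; it is exactly the statement that $\sub$ is tame (legal bounded words have uniformly bounded length), together with the existence of a legal expanding letter. If it failed, your own limiting argument would produce a point of $X_\sub$ all of whose letters are bounded, and minimality would then force \emph{every} legal letter to be bounded; but excluding an infinite minimal subshift over bounded letters requires analysing how bounded words can pump at the edges of the words $\sub^n(a)$ --- this is the property-$(\ast)$ machinery of Lemma \ref{LEM:leftmost-rightmost-periodic} and Theorem \ref{THM:wild-characterisation}, packaged as Corollary \ref{COR:aperiodic_implies_tame} (aperiodic implies tame), and it occupies a substantive portion of Section \ref{SEC:minimal} of this paper. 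Within this paper you may simply cite Corollary \ref{COR:aperiodic_implies_tame}, after which $g_0$ exists (one may take $g_0 = N_\sub$, one more than the longest bounded legal word) and your assembly goes through; but as a blind proof the assertion conceals the single hardest ingredient, and without it the interior bounded-letter runs in your desubstitution count are not uniformly controlled, so the bound $|s| \le m_0$ and hence the linear constant $C = (g_0+2)K$ both collapse.
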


			We say $\sub$ is a \emph{periodic} substitution if $X_\sub$ is finite, and $\sub$ is \emph{non-periodic} otherwise. We say $\sub$ is \emph{aperiodic} if $X_\sub$ contains no $\sigma$-periodic points (equivalently, $X_\sub$ contains no periodic closed invariant subspaces). If $\sub$ is non-periodic and primitive, then $X_\sub$ is aperiodic and topologically a Cantor set (in particular $X_\sub$ is non-empty) and $\sub$ a minimal substitution.
			\begin{remark}
			Often in the literature, the subshift of a substitution is defined in terms of the $\Z$-orbit closure of a particular bi-infinite sequence. For primitive substitutions this sequence-dependent definition coincides with our definition in terms of a language. The terms periodic, non-periodic and aperiodic are then normally associated to properties of a sequence, rather than its subshift. For our purposes, we need to define a substitution subshift independent of some generating sequence, as not all substitution subshifts contain a dense orbit.
			
			One should therefore be careful when justifying the use of the term \emph{non-periodic} to refer to a subshift which may nevertheless contain periodic points under the shift action. In particular, there exist non-periodic substitutions which contain sequences of all three types with respect to the sequence-based definition. For example $\sub\colon a\mapsto aa, b\mapsto aba, c\mapsto ccd, d\mapsto cd, e\mapsto bdecb$.
			
			This potentially confusing naming convention should then be considered in this context: non-periodic substitution subshifts \emph{contain} a point with an infinite orbit (but may also contain finite orbits); aperiodic substitution subshifts consist \emph{only} of points with infinite orbits.
			\end{remark}

		\subsection{Tiling Spaces}
			Let $\sub$ be a substitution on the alphabet $\Al$ with associated subshift $X_\sub$. The \emph{tiling space} associated to $\sub$ is the quotient space $$\TS_\sub = (X_\sub \times [0,1]) /{\sim}$$ where $\sim$ is generated by the relation $(w,0)\sim (\sigma(w),1)$.

The natural translation action $T \mapsto T+t$ for $t\in\R$ equips $\TS_\sub$ with a continuous $\R$ action which is minimal whenever $\sub$ is primitive. In this respect, tiling spaces are closely related to the more well-known spaces, the solenoids. To some degree, tiling spaces may be thought of informally as non-homogeneous solenoids. We note that there exist non-primitive substitutions with associated tiling spaces whose translation action is minimal, so primitivity is only a sufficient condition for minimality. This will be explored in Section \ref{SEC:minimal}.
		
		\begin{defn}
		Let $w = \ldots w_{-2} w_{-1} \cdot w_0 w_1 w_2 \ldots$ be a bi-infinite sequence in $X_\sub$ and let $t\in[0,1)$, so that $(w,t)$ is an element of the tiling space $\TS_\sub$. We define a map on the tiling spaces which we call $\sub \colon \TS_\sub \to \TS_\sub$, given by $$\sub(w,t) = (\sigma^{\lfloor \tilde{t} \rfloor}(\sub(w)), \tilde{t}-\lfloor \tilde{t} \rfloor)$$ where $\tilde{t} = |\sub(w_0)| \cdot t$ and $\lfloor - \rfloor$ is the floor function.
		\end{defn}
		
		This map is continuous. Intuitively, we take a unit tiling in $\TS_\sub$ with a prescribed origin and partition each tile of type $a$ uniformly with respect to the substituted word $\sub(a)$ into tiles of length $\frac{1}{|\sub(a)|}$. We then expand each tile away from the origin so that each new tile is again of unit length, and with the origin lying proportionally above the tile it appears in after partitioning the original tiling.

		A substitution $\sub$ is said to be \emph{recognisable} if the map $\sub \colon \TS_\sub \to \TS_\sub$ is injective.

		It is a result of Moss\'{e} \cite{M:aperiodic} that a primitive substitution is aperiodic if and only if it is recognisable.
		
As with subshifts, there is a notion of minimality and minimal components for tiling spaces.  
We call $\Lambda \subset \TS_\sub$ a \defemph{minimal component} of $\TS_\sub$ if $\Lambda = (L\times I)/\sim$ for some minimal component $L$ of the subshift $X_\sub$, and we say that $\TS_\sub$ is a \defemph{minimal tiling space} if it is a minimal component of itself. In Section \ref{SEC:non-minimal} this notion of minimality will be extended to any compact dynamical system, but for now this definition is more convenient.

There are many properties of primitive substitutions which one is likely to take for granted, and so we take this opportunity to explicitly spell out some of these properties and how such properties can fail in the general case (giving both minimal and non-minimal examples where appropriate).

The following results can be found in various places in the literature. We refer the reader to \cite{S:book} for a concise resource of proofs for most of these results.

\begin{prop}
	Let $\Al$ be an alphabet on $k$ letters. If $\sub \colon \Al \to \Al^+$ is primitive, then:
	\begin{enumerate}
		\item $X_\sub$ is non-empty
		\item $X_{\sub^n} = X_\sub$ for all $n \geq 1$
		\item $|\sub^n(a)| \to \infty$ as $n \to \infty$ for all $a \in \Al$
		\item $\sigma \colon X_\sub \to X_\sub$ is minimal. In particular $\TS_\sub$ is connected
		\item $\sub$ is non-periodic if and only if $\sub$ is aperiodic
		\item $\operatorname{rk}\check{H}^1(\TS) \leq k^2-k+1$ (see \cite{GM:multi-one-d} and \cite{R:mixed-subs})
		\item $\TS_\sub$ has at most $k^2$ asymptotic orbits (see \cite{BDH:asymp-orbits})
		\item If $\sub$ is recognisable then $\sub$ is non-periodic
	\end{enumerate}
\end{prop}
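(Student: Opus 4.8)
The plan is to verify the eight items roughly in the order listed, exploiting the fact that several of the later items are formal consequences of the earlier ones together with Perron--Frobenius theory and the two results already recorded (Damanik--Lenz, Theorem \ref{THM:linearly-recurrent}, and Moss\'e's theorem \cite{M:aperiodic}).

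For (1)--(3) I would work at the level of the abelianisation $M_{\sub}$ and of first/last letters. By primitivity, fix $p$ with $M_{\sub}^p > 0$; then every letter occurs in $\sub^p(a)$ for every $a$. Iterating the ``first letter of $\sub(a)$'' map and the ``last letter of $\sub(a)$'' map and using pigeonhole produces letters $b,c$ and an exponent $n$ with $\sub^n(b)$ beginning in $b$ and $\sub^n(c)$ ending in $c$; the concatenation $\ldots\sub^{2n}(c)\sub^{n}(c)\,c\cdot b\,\sub^n(b)\sub^{2n}(b)\ldots$ stabilises to a bi-infinite fixed point of $\sub^n$, which is admitted, giving (1). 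For (2) I would prove the language identity $\mathcal{L}_{\sub}=\mathcal{L}_{\sub^n}$: the inclusion $\subseteq$ is immediate since $(\sub^n)^k=\sub^{nk}$, and for $\supseteq$, given $u\subset\sub^j(a)$ I choose $s$ with $sn\ge j$ and $\sub^{sn-j}$ positive, pick $b$ with $a\subset\sub^{sn-j}(b)$, and conclude $u\subset\sub^{sn}(b)=(\sub^n)^s(b)$. For (3), once $k\ge 2$, positivity of $M_{\sub}^p$ forces every column sum of $M_{\sub}^p$ to be at least $k\ge 2$, so $|\sub^{(m+1)p}(a)|\ge 2\,|\sub^{mp}(a)|$ and lengths grow geometrically; equivalently the Perron eigenvalue exceeds $1$. (The only exception is the identity on a single letter, which I would note is excluded as periodic.)

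Item (4) is the heart of the proposition, and everything in (5) and (8) will hang on it. I would show $X_{\sub}$ is \emph{linearly recurrent} and then invoke Theorem \ref{THM:linearly-recurrent}: given any legal word $u$, choose $j$ with $u\subset\sub^j(a)$ for some $a$; by primitivity $a$ occurs in $\sub^{j+p}(b)$ for every $b$, so $u$ recurs in every admitted sequence with gaps bounded by $\max_b|\sub^{j+p}(b)|$, and a standard estimate turns this uniform recurrence into the linear bound with a constant depending only on $M_{\sub}^p$. For the connectedness of $\TS_{\sub}$ I would argue directly on the suspension: if $\TS_{\sub}=U\sqcup V$ were a clopen partition, then since each arc $\{w\}\times[0,1]$ is connected it lies wholly in $U$ or in $V$, so $A=\{w:\{w\}\times[0,1]\subset U\}$ is a clopen, $\sigma$-invariant subset of $X_{\sub}$; minimality forces $A\in\{\emptyset,X_{\sub}\}$, whence $\TS_{\sub}$ is connected. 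Items (5) and (8) are then short: a finite minimal subshift is a single periodic orbit, so minimality makes ``$X_{\sub}$ infinite'' (non-periodic) equivalent to ``$X_{\sub}$ has no periodic points'' (aperiodic), giving (5); and by Moss\'e's theorem \cite{M:aperiodic} recognisability is equivalent to aperiodicity for primitive $\sub$, so recognisable $\Rightarrow$ aperiodic $\Rightarrow$ non-periodic by (5), giving (8).

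For (6) and (7) I would lean on the cited machinery rather than reprove it. For (6), the Anderson--Putnam presentation \cite{AP} realises $\TS_{\sub}$ as $\varprojlim(\Gamma,f)$ for a finite one-dimensional CW complex $\Gamma$ built from collared tiles, so $\check{H}^1(\TS_{\sub})$ is the direct limit of the transpose of the induced map on $H^1(\Gamma)$; bounding the number of collared tiles by the count of admitted two-letter words and reducing via the relations yields the rank bound $k^2-k+1$, exactly as carried out in \cite{GM:multi-one-d} and \cite{R:mixed-subs}. For (7) I would cite \cite{BDH:asymp-orbits}, where the $k^2$ bound on asymptotic orbits is obtained by encoding asymptotic pairs through pairs of letters. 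The main obstacle, were one to insist on self-contained proofs, is the minimality/linear-recurrence step (4), since it underpins (5) and (8) and requires the uniform-recurrence estimate, together with the cohomological counting in (6); the remaining items are either elementary or direct appeals to the quoted theorems.
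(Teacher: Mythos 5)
The paper gives no proof of this proposition at all---it is presented as a list of standard facts with a pointer to \cite{S:book}, and items (6) and (7) carry their citations inside the statement---so your write-up cannot be matched against an argument in the text and must stand on its own. Most of it does. The language identity $\mathcal{L}_\sub = \mathcal{L}_{\sub^n}$ for (2) is argued correctly (note that $M_\sub^p>0$ implies $M_\sub^q>0$ for all $q\geq p$ because no column of $M_\sub$ is zero, which is exactly what your choice of $s$ needs); the geometric length growth in (3) is right, and your parenthetical about the identity on one letter is a genuinely necessary caveat, since that substitution is primitive under the paper's definition and (3) is literally false for it; the route to (4) via linear recurrence and Theorem \ref{THM:linearly-recurrent} works, and the needed supertile decomposition can even be done at the level of admitted words (every legal $v$ sits inside some $\sub^m(a)$ with $m \geq j+p$, hence inside a concatenation of blocks $\sub^{j+p}(b)$, so $|v| > 2\max_b|\sub^{j+p}(b)|$ forces $v$ to contain a full block and hence $u$), avoiding preimages of sequences entirely; the clopen-suspension argument for connectedness and the deductions (5) and (8) from minimality and Moss\'e's theorem are sound; and deferring (6) and (7) to \cite{GM:multi-one-d}, \cite{R:mixed-subs} and \cite{BDH:asymp-orbits} is what the paper itself does.

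There is, however, one genuine gap, in (1). You pigeonhole the first-letter and last-letter maps \emph{separately} to obtain $b$ with $\sub^n(b)$ beginning in $b$ and $c$ with $\sub^n(c)$ ending in $c$, and then assert that the $\sub^n$-fixed sequence $\ldots\sub^{2n}(c)\sub^n(c)\,c\cdot b\,\sub^n(b)\sub^{2n}(b)\ldots$ is admitted. Every central subword of this sequence is a subword of $\sub^{kn}(cb)$, so admittedness of the sequence requires the two-letter word $cb$ to be admitted---and nothing in your construction guarantees this. Primitivity says every letter occurs in $\sub^p(a)$, not that every \emph{pair} of letters occurs adjacently: for the Fibonacci substitution $a\mapsto ab$, $b\mapsto a$ the pair $bb$ is never admitted, so a pair produced by two independent pigeonhole arguments can fail to be a word of the language. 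The standard repair is to pigeonhole on admitted two-letter words instead: by your (3) some $\sub^m(a)$ has length at least $2$ and so contains an admitted pair, and the map sending an admitted pair $cb$ to the pair consisting of the last letter of $\sub(c)$ and the first letter of $\sub(b)$ preserves admittedness, since the image pair occurs inside $\sub(cb)$; pigeonhole on this finite set of admitted pairs then yields an admitted seed $cb$ with $\sub^n(c)$ ending in $c$ and $\sub^n(b)$ beginning in $b$, after which your construction goes through verbatim. (Alternatively, a compactness argument---admitted words of unbounded length, each extendable on both sides inside longer admitted words---produces a point of $X_\sub$ without constructing a fixed point at all.) With that one step repaired, the proof is correct.
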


\begin{prop}Counter examples to the above listed properties in the absence of primitivity are given by:
	\begin{enumerate}
		\item Let $\Al = \{a, b\}$. If $\sub \colon a\mapsto b,\: b\mapsto a$ then $X_\sub$ is empty
		\item Let $\Al = \{0, \overline{0}, 1, X\}$. If $\sub \colon 0 \mapsto \overline{0}\overline{0}1\overline{0},\: \overline{0} \mapsto 0010,\: 1\mapsto 1,\: X \mapsto 0\overline{0}$ then $0\overline{0} \in \hat{\mathcal{L}}_\sub$ but $0\overline{0} \notin \hat{\mathcal{L}}_{\sub^2}$ and so $X_{\sub^2}\subsetneq X_\sub$
		\item Let $\Al = \{a, b, c\}$. If $\sub \colon a\mapsto aaca,\: b\mapsto b,\: c\mapsto bb$ then $|\sub^n(b)| \to 1$ and $|\sub^n(c)| \to 2$ as $n \to \infty$. For a non-minimal case, see the above example for point 2 and the letter $1$
		\item See the counterexample for point 2 for a connected example. The substitution $a \mapsto ab,\: b \mapsto a,\: c\mapsto cd,\: d \mapsto c$ has a tiling space $\TS$ with two connected components
		\item Let $\Al = \{a, b, c, d\}$. If $\sub \colon a \mapsto ab,\: b\mapsto a,\: c \mapsto cc,\: d \mapsto ca$ then $$X_\sub = X_{Fib} \sqcup \bigcup_{n \in \Z} \{\sigma^n(\ldots ccc.abaab\ldots)\} \sqcup \{\ldots cc.cc \ldots\}$$ where $Fib$ is the Fibonacci substitution given by restricting $\sub$ to the subalphabet $\{a, b\}$. The substitution $\sub$ is not aperiodic because it contains the point $\ldots cc.cc \ldots$ which is fixed under $\sigma$. The substitution $\sub$ is non-periodic because $X_{Fib}$ is infinite
		\item A minimal counterexample will be given in Section \ref{SEC:examples}
		\item A minimal counterexample will be given in Section \ref{SEC:examples}
		\item Let $\Al = \{a, b\}$. If $a \mapsto ab,\: b\mapsto b$ then $X_\sub = \{\ldots bb.bb \ldots\}$ and $\TS_\sub$ is homeomorphic to a circle, with the induced substitution map $\sub \colon \TS_\sub \to \TS_\sub$ acting as the identity, hence is injective. It follows that $\sub$ is recognisable, but not non-periodic
	\end{enumerate}
\end{prop}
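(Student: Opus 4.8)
The plan is to verify each listed substitution separately, since the statement is a catalogue of independent counterexamples; I would group the immediate items and isolate the two genuinely combinatorial cases, (2) and (5). For \textbf{(1)}, $M_\sub$ is the transposition matrix, so $\sub^k(a),\sub^k(b)\in\{a,b\}$ for every $k$ and no word of length two is admitted; as every bi-infinite sequence contains length-two subwords, $X_\sub=\emptyset$. For \textbf{(3)}, $\sub(b)=b$ and $\sub^n(c)=bb$ for all $n\geq 1$ are immediate, so $|\sub^n(b)|\equiv 1$ and $|\sub^n(c)|\equiv 2$. For \textbf{(8)}, an induction gives $\sub^n(a)=ab^n$, whence the admitted words are exactly the subwords of the $ab^n$ and of $b$; in particular $aa$ and $ba$ are never admitted, so no bi-infinite sequence can contain $a$ (nothing may precede it), forcing $X_\sub=\{z\}$ with $z=\ldots bb.bb\ldots$. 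Then $\TS_\sub=(\{z\}\times[0,1])/((z,0)\sim(z,1))\cong S^1$, and since $|\sub(b)|=1$ the defining formula gives $\sub(z,t)=(z,t)$, so $\sub$ is the identity on $\TS_\sub$, hence injective and recognisable, although $X_\sub$ is finite.

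The heart of the argument is \textbf{(2)}. I first record that $X_{\sub^n}\subseteq X_\sub$ always, since any subword of $\sub^{nk}(y)$ is admitted by $\sub$. The key observation is a colour/parity phenomenon: $\sub(0)$ is a word in $\{\overline 0,1\}$ and $\sub(\overline 0)$ a word in $\{0,1\}$, with $1$ neutral, so $\sub^{2k}(0)$ lies in $\{0,1\}^*$ and $\sub^{2k}(\overline 0)$ in $\{\overline 0,1\}^*$, while the opposite holds for odd powers. Tracking first and last letters (both send $0\mapsto\overline 0\mapsto 0$) then shows that in $\sub^{2k}(X)=\sub^{2k-1}(0)\,\sub^{2k-1}(\overline 0)$ the junction reads $\overline 0\,0$, not $0\,\overline 0$. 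Combining these facts, $0\overline 0$ occurs in none of $\sub^{2k}(0),\sub^{2k}(\overline 0),\sub^{2k}(1),\sub^{2k}(X)$, so $0\overline 0$ is not even admitted by $\sub^2$, a fortiori not legal. Conversely, using that $\sub^{2k}(0)$ ends in $0$ and $\sub^{2k}(\overline 0)$ begins with $\overline 0$, with nested suffixes and prefixes respectively, I would build a bi-infinite word $w=\lt{u}\,.\,\rt{v}$ with $w_{-1}w_0=0\overline 0$; every finite subword of $w$ lies in some $\sub^{2k}(0)\sub^{2k}(\overline 0)=\sub^{2k+1}(X)$ and is thus admitted, so $w\in X_\sub$ and $0\overline 0\in\hat{\mathcal{L}}_\sub$. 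Hence $w\in X_\sub\setminus X_{\sub^2}$ and $X_{\sub^2}\subsetneq X_\sub$.

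For \textbf{(4)} the two alphabets $\{a,b\}$ and $\{c,d\}$ are non-interacting (each letter's image stays in its own pair), so $X_\sub$ is the disjoint union of two copies of the Fibonacci subshift and $\TS_\sub$ splits as two disjoint Fibonacci tiling spaces, each connected by the primitive case of the preceding proposition; this yields two components, while (2) supplies a non-minimal yet connected example. For \textbf{(5)} I would again isolate the transient letter $d$ (it occurs in no image, hence in no legal sequence) and use that the only admitted adjacency between $\{a,b\}$ and $\{c\}$ is the $ca$ produced by $\sub(d)$. Since $\sub^n(d)=c^{2^{n-1}}\sub^{n-1}(a)$, any legal sequence meeting both blocks must have all $c$'s on the left, a single $ca$ junction, and a right tail forced to be the Fibonacci fixed point $\lim\sub^n(a)$; this pins the heteroclinic part down to the single orbit $\{\sigma^n(\ldots ccc.abaab\ldots)\}$, with the all-$c$ fixed point and the full Fibonacci subshift as the remaining pieces, giving the stated decomposition. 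The fixed point $\ldots cc.cc\ldots$ shows non-aperiodicity and the infinitude of $X_{Fib}$ shows non-periodicity. Items \textbf{(6)} and \textbf{(7)} are deferred to Section~\ref{SEC:examples}.

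I expect the main obstacle to be the two directions of (2): ruling out $0\overline 0$ from $\hat{\mathcal{L}}_{\sub^2}$ requires the colour/parity bookkeeping over all letters and all even powers simultaneously, and the matching construction of a witnessing bi-infinite sequence in $X_\sub$ must be checked to be genuinely admitted across the junction. The analogous ``uniqueness of the heteroclinic orbit'' step in (5)---showing that the right tail is forced to be the Fibonacci fixed point rather than an arbitrary legal Fibonacci ray---is the next most delicate point.
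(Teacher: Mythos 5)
Your verification is correct and follows exactly the route the paper's statement presupposes: the paper offers no separate proof for this catalogue of counterexamples, and your fleshed-out checks---the parity/colour bookkeeping with the junction letters of $\sub^{2k-1}(0)\sub^{2k-1}(\overline{0})$ for item (2), the limit word $\lt{u}.\rt{v}$ witnessing $0\overline{0}\in\hat{\mathcal{L}}_\sub$, and the identification $\sub^n(d)=c^{2^{n-1}}\sub^{n-1}(a)$ forcing the unique heteroclinic orbit in item (5)---are precisely the computations the authors leave implicit. No gaps: in particular you correctly establish the stronger claim $0\overline{0}\notin\mathcal{L}_{\sub^2}$ (not merely $\notin\hat{\mathcal{L}}_{\sub^2}$) and record the needed general inclusion $X_{\sub^2}\subseteq X_\sub$.
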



\section{The Minimal Case}\label{SEC:minimal}

There are two main results in this section.  
Firstly, we identify a property, which we call \defemph{tameness}, such that any substitution with this property is well behaved, in the sense that it does not exhibit certain pathologies that can occur in the non-primitive setting.  
Then the first main result is Theorem \ref{THM:wild-characterisation}, which characterises tameness.  

The second main result of this section is the following.  

\begin{thm}\label{THM:main}
Let $\sub$ be a minimal substitution with non-empty minimal subshift $X_\sub$.  
There exists an alphabet $\Ze$ and a primitive substitution $\theta$ on $\Ze$ such that $X_\theta$ is topologically conjugate to $X_\sub$.
\end{thm}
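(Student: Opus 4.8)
The plan is to realise $X_\sub$ as the subshift of a primitive substitution built from \emph{return words}, exploiting the self-similarity that the substitution $\sub$ imposes on its own subshift. First I would make two harmless reductions. Restricting $\sub$ to the sub-alphabet of legal letters changes neither $X_\sub$ (every sequence in the subshift uses only legal letters) nor the inclusion $\sub(X_\sub)\subseteq X_\sub$, so I may assume $\sub$ is admissible. Since $X_\sub$ is minimal and non-empty it is either finite or an aperiodic Cantor set. If $X_\sub$ is finite it is a single periodic orbit of least period $n$; such an orbit is conjugate, by relabelling, to the subshift of the rank-one substitution $\theta\colon z_i\mapsto z_1z_2\cdots z_n$ on $\Ze=\{z_1,\dots,z_n\}$, whose substitution matrix is the all-ones matrix and hence primitive. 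This disposes of the periodic case, and the remainder of the argument concerns the aperiodic case.

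In the aperiodic case I would first invoke Theorem \ref{THM:linearly-recurrent} to obtain a constant $C$ with respect to which $X_\sub$ is linearly recurrent. Because $\sub$ is aperiodic it is tame, and here I would use the characterisation of Theorem \ref{THM:wild-characterisation} to secure the structural facts the construction needs: after passing to a suitable power I may assume $X_{\sub^n}=X_\sub$ and that $\sub$ is proper (all images begin with a common letter and end with a common letter) and recognisable. Choosing an initial letter $a$ with $\sub(a)$ beginning in $a$, I set the marker word to be $u=\sub^{k}(a)$, so that $u$ is a prefix of $\sub(u)$ and occurrences of $u$ are carried to occurrences of $u$ by $\sub$. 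Linear recurrence then forces the set $\Ze=\mathcal{R}_u$ of return words to $u$ to be finite, and every point of $X_\sub$ factors uniquely as a bi-infinite concatenation of these return words, marked by the occurrences of $u$.

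The substitution $\theta$ on $\Ze$ is then defined in the usual way: since $\sub$ sends an occurrence of $u$ to an occurrence of $u$, it carries each return block to a concatenation of return words, and reading this concatenation off gives $\theta\colon\Ze\to\Ze^+$, whose subshift is the derived subshift $\mathcal{D}_u(X_\sub)$. Two things remain. Primitivity of $\theta$ I would obtain from linear recurrence together with tameness: a single application of $\theta$ corresponds to applying $\sub$ to a return block, tameness guarantees these blocks genuinely grow in length under iteration (no return word stagnates), and once $\sub^p$ of a return block is longer than $C\lvert u r' u\rvert$, linear recurrence forces it to contain the word $u r' u$, hence the return word $r'$; finiteness of $\Ze$ then yields a single power $p$ with $r'\subset\theta^{p}(r)$ for all $r,r'\in\Ze$.

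The hard part, and the step I expect to be the main obstacle, is upgrading the return-word coding from a mere factor (or first-return/induced system, which is \emph{not} conjugate to the original in general) to a genuine topological conjugacy $X_\theta=\mathcal{D}_u(X_\sub)\cong X_\sub$. This is exactly where the substitutive structure is indispensable: recognisability makes $\sub$ a homeomorphism onto its image, and with the marker $u=\sub^{k}(a)$ the first-return map to the cylinder $[u]$ is conjugate to $\sigma$ on $X_\sub$ itself by the self-inducedness supplied by $\sub$, so the coding commutes with the shift after the standard reindexing and is bijective. Controlling this self-similarity uniformly over \emph{all} points of the subshift, rather than over a single generating sequence, is precisely the ingredient provided by the tameness characterisation and is what makes the statement slightly stronger than the version indicated by Durand in \cite{D:main-result}.
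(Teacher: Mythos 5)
Your proposal follows the paper's skeleton---the periodic case is dispatched identically, the marker/return-word construction is the paper's Section \ref{SUBSEC:new-sub}, and your primitivity argument (growth of return blocks plus linear recurrence swallowing $ur'u$) is essentially Lemma \ref{LEM:primitivity}---but two claims do not survive scrutiny, and the second is fatal as written. The minor, repairable one: you cannot ``pass to a power and assume $\sub$ is proper.'' No power of Thue--Morse ($0\mapsto 01$, $1\mapsto 10$) is proper, since $\sub^n(0)$ always begins with $0$ and $\sub^n(1)$ with $1$; properness is achieved by \emph{rewriting}, which is a large part of what is being proved, and Theorem \ref{THM:wild-characterisation} supplies no such normalisation (nor does it supply recognisability---that is the Bezuglyi--Kwiatowski--Medynets theorem, and in fact the paper's conjugacy argument never needs it). What you actually need is weaker and is what the paper extracts: a \emph{legal expanding} letter $b$ with $b\subset\sub^N(b)$ in at least two places, obtained from tameness via Lemma \ref{LEM:minimal-seed} and Remark \ref{REM:find-seed} together with linear recurrence. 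Note that finding a legal expanding seed is exactly where property $(\ast)$ and the $\Al_{left}/\Al_{right}$ analysis earn their keep; the eventual cycle of the ``first letter of $\sub(a)$'' map may consist of bounded or illegal letters, so your marker $u=\sub^k(a)$ is not available without this detour.

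The genuine gap is your final step, which you correctly flag as the hard part and then resolve with an identification that is false. The derived subshift $X_\theta=\mathcal{D}_u(X_\sub)$ is conjugate to the \emph{first-return} system on the cylinder of points beginning a return word to $u$, whereas the self-inducedness that $\sub$ provides identifies $(X_\sub,\sigma)$ with the first-return system on the clopen set $\sub^N(X_\sub)$---a \emph{different} clopen set. Induced systems over distinct clopen subsets of the same minimal Cantor system are Kakutani equivalent but in general not conjugate (one step of the derived shift corresponds to a variable number $|v|$ of steps of $\sigma$, and a conjugacy would have to carry the order unit of the dimension group to the class of the return cylinder); Fibonacci, where the derived system happens to reproduce the original, is the misleading special case. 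Your own parenthetical concedes this, and the paper says it outright at the start of Section \ref{SUBSEC:topological-conjugacy}: its return-word substitution $\psi$ (your $\theta$) ``does not necessarily give rise to a topologically conjugate subshift''---it only gives a homeomorphic tiling space. The paper closes the gap with Durand's Proposition \ref{PROP:letter-expansion} applied to the letter-expansion map $g(\tilde v)=v$: this produces a \emph{new} primitive substitution $\theta$ on the tower alphabet $\Ze=\{(\tilde v,k): v\in\Be,\ 1\leq k\leq |v|\}$, and the conjugacy is explicit---the one-block code $h$ reading off the $\Al$-letter at position $k$ of $v$, with inverse the sliding block code $p$ recording which return word covers a given position and the offset within it. This letter-expansion/tower step is precisely what your proposal is missing, and the self-inducedness you invoke cannot substitute for it.
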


The idea of the theorem is that non-primitive substitutions are `pathological' and primitive ones are `well behaved', and the theorem makes it possible to replace a non-primitive substitution with a primitive one if the substitution is minimal.  
This is similar to, but slightly stronger than, a result from the section on Open problems and perspectives (Section 6.2) of \cite{D:main-result}.  
There are three reasons for presenting this result here.  
Firstly, the result of \cite{D:main-result} does not appear to be well known, but is basic enough that it seems worthwhile to draw attention to it.  
Secondly, the proof appearing in \cite{D:main-result} is only a sketch, using the Chacon substitution as an illustrative example, whereas a complete proof appears here.  
Thirdly, the result presented here applies to the bi-infinite context, whereas the result of \cite{D:main-result} applies to the one-sided infinite context.  
In this bi-infinite context there are significant pathologies that are not possible in the one-sided infinite context.  
In particular, there exist examples (such as Example \ref{EX:periodic}) of minimal substitutions for which the subshift is not generated by any finite seed.  
Corollary \ref{COR:aperiodic_implies_tame}, which was originally proved in \cite{BKM:recognisable}, and which is a consequence of Theorem \ref{THM:wild-characterisation}, implies that such a pathological substitution must give rise to a periodic subshift, in which case it is easy to find a primitive substitution giving rise to the same subshift.  

\begin{example}\label{EX:periodic}
Let 
\[
\sub \colon \left\{ \begin{array}{l} a \mapsto ab \\ b \mapsto b \end{array} \right..
\]
Then $X_\sub$ is periodic---it contains only the constant sequence $\ldots bbb\ldots$.  
This sequence does not contain any instance of the letter $a$, which is the only letter of which the images under $\sub^n$ grow without bound.  
\end{example}

\subsection{Periodicity and Aperiodicity}\label{SUBSEC:periodicity}

The lemmas below divide the class of substitutions with minimal subshifts into two subclasses, depending upon whether or not there is any legal letter, the length of which grows without bound under $\sub$.  
In particular, Corollary \ref{COR:aperiodic_implies_tame} implies that, in the absence of such a legal letter, a minimal subshift must be periodic, as in Example \ref{EX:periodic}.  
Some of these results apply more generally to non-minimal substitutions, and will see further use in Section \ref{SEC:non-minimal}.  

These lemmas involve a partition of the alphabet into two subsets.  
\begin{defn}\label{DEF:bounded-expanding}
For a substitution $\sub \colon \Al \to \Al^+$, let us say that a word $u\in \Al^+$ is \defemph{bounded} with respect to $\sub$ if there exists $M\in\N$ such that $|\sub^n(u)|\leq M$ for all $n\in\N$, and \defemph{expanding} if it is not bounded.  

Let us denote the set of bounded letters for $\sub$ by $\Al_B$, and the set of expanding letters by $\Al_\infty$.  
\end{defn}

Then $\Al$ is the disjoint union of $\Al_B$ and $\Al_\infty$, and if $X_\sub$ is non-empty then $\Al_\infty$ is non-empty.  
Note also that, for every $b \in \Al_\infty$, $\sub(b)$ must contain at least one letter in $\Al_\infty$, whereas for every $a \in \Al_B$, $\sub(a)$ contains only letters in $\Al_B$.  

The following definitions will be useful in separating the badly behaved non-primitive substitutions from the well behaved ones.  
	\begin{defn}\label{DEF:wild}
	Let $\sub$ be a substitution, and let $B$ denote the set of bounded legal words for $\sub$. If $B$ is finite, we say $\sub$ is \emph{tame}. If $B$ is infinite, we say $\sub$ is \emph{wild}.
	\end{defn}

The substitution in Example \ref{EX:periodic} is wild, as the periodic sequence $\ldots bb.bb \ldots$ is an element of the subshift and the words $b^n$ are all bounded for $\sub$.

	\begin{example}
		The substitution $\sub' \colon a \mapsto abb,\: b \mapsto bbb$ is tame, as $|\sub(u)| = 3|u|$ for all words $u$.
	\end{example}
	 Note that the subshifts $X_\sub$ and $X_{\sub'}$ for these two examples are the same, so tameness is only a property of a substitution and not its associated subshift.  

A particular goal of introducing these definitions is to to show that, if a minimal substitution is wild, then it is periodic.  

Wildness can be characterised in terms of the following two sets and the property $(\ast)$.  
\begin{defn}\label{DEF:right-bounded}
	Let $\Al_{right}\subset \Al_\infty$ denote the set of expanding letters such that for every $a\in\Al_{right}$ the rightmost letter of $\sub(a)$ is a bounded letter, and define $\Al_{left}$ similarly.  
\end{defn}
\begin{defn}
	Suppose that either there exists a letter $a\in \Al_{right}$ and an increasing sequence of integers $N_i$ such that the rightmost expanding letter appearing in $\sub^{N_i}(a)$ is also in $\Al_{right}$ for all $i\geq 1$, or else there exists a letter $a\in \Al_{left}$ and an increasing sequence of integers $N_i$ such that the leftmost expanding letter appearing in $\sub^{N_i}(a)$ is also in $\Al_{left}$ for all $i\geq 1$.
	
	In such a case, we say that $\sub$ has property $(\ast)$.
\end{defn}

The following lemma is used in the proof of Theorem \ref{THM:wild-characterisation}, which characterises wildness.  
        \begin{lemma}\label{LEM:leftmost-rightmost-periodic}
        Let $\sub$ be a substitution on $\Al$ with property $(\ast)$.  Then $X_\sub$ contains a periodic sequence, the letters of which are all bounded.
        \end{lemma}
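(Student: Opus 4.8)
The plan is to treat the two alternatives in property $(\ast)$ symmetrically, so I would assume the ``right'' alternative holds; the ``left'' case is identical after reflecting all words. First I would track the rightmost expanding letter under iteration. Since $a\in\Al_{right}\subset\Al_\infty$ and the image of any expanding letter contains an expanding letter, the word $\sub^n(a)$ always contains an expanding letter, so I can define $c_n$ to be the rightmost expanding letter occurring in $\sub^n(a)$, with $c_0=a$. Because everything to the right of $c_n$ in $\sub^n(a)$ is bounded, and bounded words map to bounded words, the substitution of that suffix contributes no expanding letters; one checks that $c_{n+1}$ is exactly the rightmost expanding letter of $\sub(c_n)$, and iterating, $c_{n+k}$ is the rightmost expanding letter of $\sub^k(c_n)$. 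Property $(\ast)$ supplies an increasing sequence $N_i$ with $c_{N_i}\in\Al_{right}$, and since $\Al_{right}$ is finite the pigeonhole principle gives a single letter $c\in\Al_{right}$ with $c_{N_i}=c$ for infinitely many $i$. Choosing two such indices $N_i<N_j$ and setting $m=N_j-N_i\ge 1$ yields that $c$ is itself the rightmost expanding letter of $\sub^m(c)$.

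Now I would pass to $\psi=\sub^m$, noting that a word is bounded for $\psi$ if and only if it is bounded for $\sub$, so $\Al_B$ is unchanged. By construction $\psi(c)=P\,c\,S$, where $c$ is expanding, $S$ is the suffix of bounded letters lying to the right of $c$ (non-empty, since $c\in\Al_{right}$ forces the rightmost letter of $\psi(c)$ to be bounded), and $P$ is a prefix. Writing $\phi=\psi|_{\Al_B}$ for the induced substitution on bounded letters, an easy induction gives $\psi^n(c)=Q_n\,c\,\bigl(S\,\phi(S)\,\phi^2(S)\cdots\phi^{n-1}(S)\bigr)$ for some prefixes $Q_n$, because the growing suffix consists entirely of bounded letters and is therefore untouched by the expanding dynamics. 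In particular the suffixes $S\,\phi(S)\cdots\phi^{n-1}(S)$ are the prefixes of a single right-infinite word $r=S\,\phi(S)\,\phi^2(S)\cdots$ over $\Al_B$, and every finite prefix of $r$ is a subword of $\sub^{mn}(c)$ for large $n$, hence is admitted by $\sub$.

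It remains to extract genuine periodicity from $r$ and to produce a bi-infinite point of $X_\sub$. Since every letter of $S$ is bounded, the lengths $|\phi^k(S)|$ are bounded by some $M'$, so the words $\phi^k(S)$ all lie in the finite set of words over $\Al_B$ of length at most $M'$. The sequence $k\mapsto\phi^k(S)$ is determined by a deterministic map on this finite set, hence is eventually periodic: there are $k_0\ge 0$ and $q\ge 1$ with $\phi^{k_0+q+j}(S)=\phi^{k_0+j}(S)$ for all $j\ge 0$. Consequently the tail $\phi^{k_0}(S)\,\phi^{k_0+1}(S)\cdots$ of $r$ is \emph{purely} periodic with period word $\Pi=\phi^{k_0}(S)\cdots\phi^{k_0+q-1}(S)\in\Al_B^{+}$. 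Let $w$ be the bi-infinite sequence of period $|\Pi|$ obtained by repeating $\Pi$. Every finite subword of $w$ occurs in some power $\Pi^{N}$, which occurs inside $r$---hence inside some admitted prefix of $r$---and is therefore admitted; thus $w\in X_\sub$. Since $w$ is $\sigma$-periodic and consists only of letters of $\Pi\in\Al_B^{+}$, it is the desired periodic sequence of bounded letters.

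The bookkeeping with the rightmost expanding letter is routine; the step I expect to require the most care is the passage from the right-infinite admitted word $r$ to an actual periodic element of $X_\sub$. The subtlety is that eventual periodicity of $r$ does not follow merely from each block $\phi^k(S)$ being bounded in length---one must observe that consecutive blocks are generated deterministically, so that the block \emph{sequence}, and not just the block lengths, eventually cycles, giving a genuine period word $\Pi$ whose powers are admitted and hence place $w$ in the subshift.
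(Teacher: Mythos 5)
Your proof is correct and follows essentially the same route as the paper's: reduce by symmetry to the right case, use pigeonhole on the rightmost expanding letter to find $c \in \Al_{right}$ with $c$ the rightmost expanding letter of $\sub^m(c)$, iterate to accumulate a growing bounded suffix, and use eventual periodicity of the deterministic block sequence to extract an admitted periodic word of bounded letters. The only cosmetic differences are that where you concatenate $q$ consecutive blocks into the period word $\Pi$, the paper instead replaces $\sub$ by a power so that the block sequence becomes eventually constant (a single repeated block), and your explicit check that the bounded suffix $S$ is non-empty makes precise a point the paper leaves implicit.
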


	\begin{proof}
	Suppose that there exists a letter $a\in \Al_{right}$ and an increasing sequence of integers $N_i$ such that the rightmost expanding letter of $\sub^{N_i}(a)$ is also in $\Al_{right}$; the case with leftmost expanding letters is similar.  Note that the rightmost expanding letter of $\sub^{N_k}(a)$ must also have the same property as $a$ for the shifted sequence of integers $M_{i}= N_{i-k}$. So, by possibly choosing a different $a\in \Al_{right}$ we may further assume without loss of generality that there is a power $N$ so that the rightmost expanding letter of $\sub^{N}(a)$ is $a$. So, let $\sub^{N}(a) = vau$ where $u$ is a bounded word. Then by induction, we have $$\sub^{(k+1)N}(a) = \sub^{kN}(v)\ldots \sub^{N}(v) v a u \sub^N(u) \ldots \sub^{kN}(u).$$
	
	Now, as $u$ is a bounded word, there exists a $K$ such that $|\sub^{(K+1)N}(u)| = |\sub^{KN}(u)|$ and as there are only finitely many words of this length, by possibly replacing $\sub$ with a power, we can choose $K$ such that $\sub^{(K+1)N}(u) = \sub^{KN}(u)$. So for all $j \geq K$, the word $(\sub^{KN}(u))^j$ appears as a subword of $\sub^n(a)$ for some $n$. As such, the periodic sequence $$\ldots \sub^{KN}(u) \sub^{KN}(u)\sub^{KN}(u) \ldots$$ is admitted by $\sub$. This means that the subshift $X_\sub$ contains a periodic point, and, as $u$ is bounded, so is $\sub^{KN}(u)$.
	\end{proof}

The main result of this section is the following theorem, which characterises wildness.  
	\begin{thm}\label{THM:wild-characterisation}
	Let $\sub$ be a substitution on an alphabet $\Al$.  
The following are equivalent:  
\begin{enumerate}
\item  $\sub$ is wild.  
\item  $\sub$ has property $(\ast)$.
\end{enumerate}
	\end{thm}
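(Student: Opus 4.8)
The plan is to prove both implications, and the real work lies in reformulating property $(\ast)$ as a finitary statement about a single map on the expanding letters. For an expanding letter $a\in\Al_\infty$ the word $\sub(a)$ contains at least one expanding letter, so I may let $R(a)\in\Al_\infty$ be the rightmost expanding letter occurring in $\sub(a)$; this defines a map $R\colon\Al_\infty\to\Al_\infty$. Since every letter strictly to the right of $R(a)$ in $\sub(a)$ is bounded, an easy induction shows that the rightmost expanding letter of $\sub^n(a)$ is exactly $R^n(a)$. Defining $L\colon\Al_\infty\to\Al_\infty$ symmetrically, I would first check that property $(\ast)$ is equivalent to the assertion that $R$ has a periodic cycle meeting $\Al_{right}$, or $L$ has a periodic cycle meeting $\Al_{left}$. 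Indeed, because $\Al_\infty$ is finite, the forward $R$-orbit of any letter is eventually periodic, so ``$R^{N_i}(a)\in\Al_{right}$ for an increasing sequence $N_i$ and some $a\in\Al_{right}$'' happens precisely when the eventual cycle reached contains a letter of $\Al_{right}$.

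For $(\ast)\Rightarrow$ wild I would simply invoke Lemma \ref{LEM:leftmost-rightmost-periodic}: property $(\ast)$ produces a periodic sequence $\ldots ccc\ldots\in X_\sub$ all of whose letters are bounded, with $c$ a non-empty period word. Every power $c^m$ is then legal, and since a concatenation of bounded letters is a bounded word, each $c^m$ lies in the set $B$ of bounded legal words. As the $c^m$ have unbounded length, $B$ is infinite and $\sub$ is wild.

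The substantial direction is wild $\Rightarrow(\ast)$, which I would prove by contraposition: assuming neither $R$ nor $L$ has a cycle meeting $\Al_{right}$, respectively $\Al_{left}$, I would show $B$ is finite. Fix $D=\max_{c\in\Al}|\sub(c)|$ and $M=\max_{x\in\Al_B}\sup_n|\sub^n(x)|$, the latter finite by the definition of bounded letters; note that any bounded word $W$ satisfies $|\sub^n(W)|\le M|W|$. The key uniform estimate is that the maximal bounded suffix of $\sub^n(a)$ has length bounded independently of $n$ and $a$: by a telescoping identity this suffix is a concatenation of $\sub$-images of the one-step emitted suffixes $s(R^m(a))$ for $0\le m<n$ (where $s(R^m(a))$ is the bounded suffix of $\sub(R^m(a))$), and such a piece is non-empty exactly when $R^m(a)\in\Al_{right}$. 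The hypothesis forces the forward $R$-orbit of $a$ to meet $\Al_{right}$ only finitely often, and each surviving piece is the image of a bounded word of length at most $D$, hence of length at most $MD$; so the suffix length is at most $|\Al_\infty|\,MD$. The symmetric statement bounds prefixes. Finally I would trace the genealogy of an interior maximal bounded block $V\subset\sub^k(b)$: it decomposes as a bounded suffix emitted on its left, the $\sub$-image of a maximal bounded block one level down, and a bounded prefix emitted on its right, with the flanking expanding letters at successive levels related by $R$ on the left and $L$ on the right. Unrolling to the base level expresses $|V|$ as a sum of emitted pieces plus a short core, and the same finiteness of $\Al_{right}$- and $\Al_{left}$-visits bounds the whole sum uniformly in $k$ and $b$; blocks touching an end of $\sub^k(b)$ are controlled directly by the prefix/suffix estimate. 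Hence every admitted, and in particular every legal, bounded word has uniformly bounded length, so $B$ is finite and $\sub$ is tame.

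The main obstacle is precisely this genealogical bookkeeping in the wild $\Rightarrow(\ast)$ direction. The naive estimate, bounding a length-$k$ block by summing one emitted piece per level, gives only a linear-in-$k$ bound and is useless. Two points rescue the argument: each emitted piece is the $\sub$-image of a bounded word and so has length at most $MD$ no matter how many times $\sub$ has since been applied; and, crucially, under the no-cycle hypothesis the emitting letters run through an eventually periodic $R$-orbit (respectively $L$-orbit) that enters $\Al_{right}$ (respectively $\Al_{left}$) only during its finite pre-period, so only boundedly many emitted pieces are non-empty. Making the boundary cases and the bottoming-out of the recursion fit the same uniform bound is the fiddly part that the write-up must handle with care.
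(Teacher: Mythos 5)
Your proposal is correct and follows essentially the same route as the paper's proof: $(2)\Rightarrow(1)$ by invoking Lemma \ref{LEM:leftmost-rightmost-periodic}, and the contrapositive of $(1)\Rightarrow(2)$ by showing that, in the absence of property $(\ast)$, the bounded prefixes and suffixes of the words $\sub^n(a)$ (the paper's finite sets $U_{left}$ and $U_{right}$) and hence all legal bounded words have uniformly bounded length. Your $R$/$L$-map reformulation of $(\ast)$ and the genealogical unrolling of maximal bounded blocks merely make explicit, with sound bookkeeping, the steps the paper compresses into ``there exists an $N$ such that\dots'' and ``It is easy to see\dots''.
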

\begin{proof}
The fact that (2) implies (1) is an immediate consequence of Lemma \ref{LEM:leftmost-rightmost-periodic}.  

To see that (1) implies (2), suppose that $\sub$ does not have property $(\ast)$. Then there is no $a\in \Al_{left}$ and increasing sequence of integers $N_i$ such that the leftmost expanding letter appearing in $\sub^{N_i}(a)$ is also in $\Al_{left}$ for all $i\geq 1$. By this assumption, there exists an $N$ such that the leftmost expanding letter in $\sub^{N+k}(a)$ is never in $\Al_{left}$ for any expanding letter $a$. Let $U_{left}$ be the set of bounded words that appear at the start of any word of the form $\sub^n(a)$ for any expanding letter $a$. This set is finite because $\sub^{N+k}(a)$ will be a word of the form $ubv$ where $u$ is bounded and the leftmost letter of $\sub(b)$ is expanding and also not in $\Al_{left}$. Let $k_{left} = \max\{|u| \mid u\in U_{left}\}$.
	
	As $\sub$ does not have property $(\ast)$, there is also no $a\in \Al_{right}$ and increasing sequence of integers $N_i$ such that the leftmost expanding letter appearing in $\sub^{N_i}(a)$ is also in $\Al_{right}$ for all $i\geq 1$. Then we can similarly form $U_{right}$, the set of bounded words that appear at the end of any word of the form $\sub^n(a)$ for $a\in\Al_{right}$. Let $k_{right} = \max\{|u| \mid u\in U_{right}\}$.
	
	It is easy to see that the only legal bounded words for $\sub$ are either bounded words appearing as subwords contained in the interior of $\sub(a)$ for an expanding $a$, or words of the form $u_1u_2$ for $u_1\in U_{right}$ and $u_2\in U_{left}$. It follows that every bounded word has length at most $\max\{ k_{left} + k_{right}, |\sub(a)| \mid a \in \Al\}$ and so $\sub$ is tame.
\end{proof}
	\begin{cor}\label{COR:leftmost-rightmost1}
	 If the leftmost and rightmost letter of $\sub(a)$ are elements of $\Al_\infty$ for all $a\in\Al_\infty$, then $\sub$ is tame.
	\end{cor}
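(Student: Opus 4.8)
The plan is to deduce this directly from Theorem \ref{THM:wild-characterisation}, which asserts that $\sub$ is wild if and only if it has property $(\ast)$; equivalently, $\sub$ is tame precisely when it fails to have property $(\ast)$. So the goal reduces to showing that the hypothesis rules out property $(\ast)$ entirely.

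First I would translate the hypothesis into a statement about the sets $\Al_{right}$ and $\Al_{left}$. Recall that $\Al_{right}$ consists of those expanding letters $a$ for which the rightmost letter of $\sub(a)$ is bounded. The hypothesis says that for every $a\in\Al_\infty$ the rightmost letter of $\sub(a)$ lies in $\Al_\infty$, i.e.\ is expanding rather than bounded; hence no expanding letter can meet the defining condition for membership in $\Al_{right}$, and so $\Al_{right} = \emptyset$. The identical argument with ``leftmost'' in place of ``rightmost'' gives $\Al_{left} = \emptyset$.

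Finally I would observe that property $(\ast)$ requires, in either of its two disjuncts, the existence of a letter $a$ belonging to $\Al_{right}$ or to $\Al_{left}$. Since both of these sets are empty, neither disjunct can be satisfied, so $\sub$ does not have property $(\ast)$. By the equivalence in Theorem \ref{THM:wild-characterisation}, $\sub$ is therefore not wild, which is exactly to say that $\sub$ is tame. There is no real obstacle here beyond correctly reading the definitions: the content of the corollary is entirely carried by the characterisation theorem, and the hypothesis is tailor-made to force $\Al_{right}$ and $\Al_{left}$ to vanish.
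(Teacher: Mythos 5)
Your proposal is correct and is precisely the argument the paper intends: the corollary is stated without proof immediately after Theorem \ref{THM:wild-characterisation} because, as you observe, the hypothesis forces $\Al_{right} = \Al_{left} = \emptyset$, so neither disjunct of property $(\ast)$ can hold, and tameness follows from the characterisation. Your reading of Definition \ref{DEF:right-bounded} and the reduction to the theorem are both accurate, so there is nothing to add.
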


The next corollary, which originally appeared in \cite{BKM:recognisable}, and which says that an aperiodic substitution is tame, follows from Theorem \ref{THM:wild-characterisation} and Lemma \ref{LEM:leftmost-rightmost-periodic}.  
\begin{cor}[Proposition 5.5 in \cite{BKM:recognisable}]\label{COR:aperiodic_implies_tame}
Let $\sub$ be a substitution on $\Al$. If $\sub$ is aperiodic, then $\sub$ is tame.
\end{cor}

Lemma \ref{LEM:leftmost-rightmost-periodic}, Theorem \ref{THM:wild-characterisation}, and Corollaries \ref{COR:leftmost-rightmost1} and \ref{COR:aperiodic_implies_tame} do not include minimality as a hypothesis, but in the presence of minimality there are further consequences.  
Any non-periodic minimal substitution is aperiodic, so Corollary \ref{COR:aperiodic_implies_tame} implies in particular that a non-periodic minimal substitution is tame.  

The following easy lemma is an immediate consequence of the definition of tameness (Definition \ref{DEF:wild}), and will be useful for rewriting tame minimal substitutions.  
\begin{lemma}\label{LEM:minimal-seed}
Let $\sub$ be a tame substitution on $\Al$.  If $X_\sub$ is non-empty, then it contains a bi-infinite sequence $w \in X_\sub$ with the property that there exists $M\in\N$ such that every word $u\subset w$ of length exceeding $M$ contains an expanding letter.  In particular, $w$ contains infinitely many expanding letters.  
\end{lemma}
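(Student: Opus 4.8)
The plan is to reduce everything to a single combinatorial observation about letters, and then read off both assertions from the finiteness provided by tameness.

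First I would record the criterion that a word is bounded precisely when all of its letters lie in $\Al_B$; equivalently, a word is expanding if and only if it contains at least one letter of $\Al_\infty$. One direction is immediate: if $u$ contains an expanding letter $b$, then $|\sub^n(u)| \geq |\sub^n(b)|$, and the right-hand side is unbounded in $n$ by definition of $\Al_\infty$, so $u$ is expanding. Conversely, if every letter of $u = a_1\ldots a_k$ lies in $\Al_B$, then choosing $M_i$ with $|\sub^n(a_i)| \leq M_i$ for all $n$ gives $|\sub^n(u)| = \sum_{i=1}^{k} |\sub^n(a_i)| \leq \sum_{i=1}^{k} M_i$, so $u$ is bounded.

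Next I would use tameness directly. Since $\sub$ is tame, the set $B$ of bounded legal words is finite, so I may set $M = \max\{|u| : u \in B\}$ (with $M = 0$ if $B = \emptyset$). I claim that any $w \in X_\sub$, which exists because $X_\sub$ is non-empty, has the required property. Indeed, every subword $u \subset w$ is legal, being a subword of a sequence in the subshift; if moreover $|u| > M$, then $u \notin B$, so $u$ is a legal word that fails to be bounded, and by the criterion above it must contain an expanding letter.

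Finally, for the ``in particular'' clause I would argue by contradiction. If $w$ contained only finitely many expanding letters, there would be an index $N$ with $w_i \in \Al_B$ for all $i > N$; the length-$(M+1)$ block $w_{N+1}\ldots w_{N+M+1}$ would then consist only of bounded letters, hence be bounded by the criterion, while also being legal, placing it in $B$ and contradicting $|u| \leq M$ for every $u \in B$. So $w$ contains infinitely many expanding letters. I do not anticipate any real obstacle: the only points needing care are the letter-level boundedness criterion of the first step, since boundedness is defined for arbitrary words rather than single letters, and the ``in particular'' clause, which requires this brief contradiction rather than following formally from the displayed property.
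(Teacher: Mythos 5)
Your proof is correct and is exactly the argument the paper intends: the paper offers no written proof, calling the lemma ``an immediate consequence of the definition of tameness,'' and your two steps---the letter-level criterion that a word is bounded iff all its letters lie in $\Al_B$, followed by taking $M = \max\{|u| : u \in B\}$ so that every legal word longer than $M$ must be expanding---are precisely that immediate consequence, spelled out (and your verification of the ``in particular'' clause via long blocks of bounded letters is the right bookkeeping). Nothing further is needed.
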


\begin{remark}\label{REM:find-seed}
If $\sub$ is an aperiodic substitution on $\Al$, there is a recipe for finding legal expanding letters.  
Let $S$ denote the set of all pointed words in $\mathcal{L}_\sub$ of the form
\begin{equation}\label{EQ:seed}
u = b_{-k-1} a_{-k}\cdots a_{-1}.a_0\cdots a_lb_{l+1},
\end{equation}
where $b_{-k-1},b_{l+1}\in \Al_\infty$ and $a_i\in \Al_B$ for $-k\leq i \leq l$.  
Here the pointed words $a.bc$ and $ab.c$ are different.  
$S$ consists of words in the language of $\sub$, but they need not all be legal words, although by Lemma \ref{LEM:minimal-seed} $S$ contains at least one legal word.  

Define a function $f:S\to S$ as follows.  
For a pointed word $u$ of the form in \ref{EQ:seed}, the words $\sub(b_{-k-1})$ and $\sub(b_{m+1})$ are subwords of $w = \sub(u)$ occurring at the beginning and the end respectively.  
Each of these words contains at least one expanding letter.  
Let $b^-$ be the last such letter occurring in $\sub(b_{-k-1})$, and let $b^+$ be the first such letter occurring in $\sub(b_{m+1})$.  
Then $w$ contains a subsequence of the form $w_{m_1}\ldots w_{m_2}$, where $m_1 < 0 \leq m_2$, $w_{m_1} = b^-$, $w_{m_2} = b^+$, and $w_i \in \Al_B$ for all $m_1 < i < m_2$.  
Moreover, $w_{m_1} \ldots w_{m_2} \in S$.  
Therefore let us define $f(u) = w_{m_1} \ldots w_{m_2}$ (seen as a pointed word).  

Choose a word $u \in S$.  
Such a word can be found by considering the sets $\{ \sub^n(a) : a \in \Al\}$---by Lemma \ref{LEM:minimal-seed}, for sufficiently high $n$ this set contains a word with expanding letters in at least two positions, which can then be shifted and truncated to obtain $u\in S$.  

Consider the forward $f$-orbit of $u$.  
If this orbit were infinite, that would imply that $\sub$ satisfied the hypothesis of Lemma \ref{LEM:leftmost-rightmost-periodic}, using either $a = b_{-k-1}$ or $a = b_{l+1}$.  
This would imply that $X_\sub$ contained a periodic point, contradicting the hypothesis of aperiodicity.  
Therefore the forward $f$-orbit of $u$ is finite.  
This means that some word $v$ in this forward $f$-orbit is sent to itself under a power of $f$; $v$ can then be used as the seed to produce a bi-infinite word in $X_\sub$ that is fixed under $\sub$, and both of the expanding letters in $v$ are legal.  
\end{remark}

\subsection{A New Substitution}\label{SUBSEC:new-sub}

Any periodic minimal subshift is equal to the subshift of a primitive substitution of constant length (say, the substitution that sends each legal letter to the same sequence $u$ with the property that $\ldots u.uu\ldots$ is in the subshift), so in the periodic case the conclusion of Theorem \ref{THM:main} is immediately true.  
Therefore we may suppose henceforth that the minimal subshift $X_\sub$ is non-periodic, and hence tame, and so by Lemma \ref{LEM:minimal-seed} and Remark \ref{REM:find-seed}, that $X_\sub$ contains a bi-infinite sequence $w$ that is invariant under $\sub^N$ and that contains a legal letter $b \in \Al_\infty$.  
Moreover, $\sub^N(b)\subset \sub^N(w) \in X_\sub$ and $X_\sub$ is linearly recurrent, so for sufficiently large $N$, $\sub^N(b)$ must contain $b$.  
Similarly, linear recurrence implies that any legal word $u$ appears in $\sub^{k_uN}(b)$ for some $k_u\in\N$.  
By passing to a multiple of $N$ if necessary, we may suppose further that $\sub^N(b)$ contains at least two copies of the letter $b$.

Define $\Be := \{ bu : u \text{ does not contain } b \text{ and } bub\text{ is legal}\}$.  
In the terminology of \cite{D:main-result}, these are the \defemph{return words to $b$}.  

Enumerate the elements of $\Be\setminus \{ b\}$: $\Be\setminus \{ b\} = \{ v_1,\ldots , v_k\}$.  
If $b\in\Be$, then write $v_0 = b$.  

We can break $\sub^N(b)$ into block form:
\begin{align*}
\sub^N(b) & = uv_{01}\ldots v_{0r_0},
\end{align*}
where $u$ does not contain $b$ and, for $1\leq j \leq r_0$, $v_{0j}$ has the form $bv$ for some $v$ that does not contain $b$.  
Moreover, as $\sub^N(b)$ contains $b$ in at least two distinct places, we know that $r_0 > 1$.  
And, as $b$ is legal, so is $\sub^N(b)$, so if $j<r_0$ then the sequence $v_{0j}b$ is legal, and so $v_{0j}\in \Be$.  
The word $v_{0r_0}$ need not be in $\Be$.  

For each $i\geq 1$, we can write 
\begin{align*}
\sub^N(v_i) & = \sub^N(b)w_iv_{i1}\ldots v_{ir_i},
\end{align*}
where $r_i\geq 0$, $w_i$ does not contain $b$, and, for $1\leq j\leq r_i$, $v_{ij}$ has the form $bv$ for some $v$ that does not contain $b$.  
If $r_i > 0$, then for all $j < r_i$, the word $v_{ij}b$ appears in $\sub^N(v_i)$, and hence is legal, so $v_{ij} \in \Be$.  
The word $v_{ir_i}$ need not be in $\Be$, but $v_ib$ is legal, and hence $\sub^N(v_ib)$ is legal, and this word contains $v_{ir_i}ub$.  
Therefore, if $r_i>0$, then $v_{ir_i}u \in \Be$; let us denote this word by $v_{ir_i}'$.  

Further, although the word $v_{0r_0}$ from above need not be in $\Be$, for all $i$ with $r_i > 0$ it is true that $v_{0r_0}w_i \in \Be$, and for all $i$ with $r_i = 0$ it is true that $v_{0r_0}w_iu \in \Be$.  
Let us denote by $w_i'$ the word $v_{0r_0}w_i$ if $r_i>0$ or $v_{0r_0}w_iu$ if $r_i = 0$.  
Also $v_{0r_0}u \in \Be$; let us denote this word by $v_{0r_0}'$.  

Let $\Ga$ be a new alphabet, disjoint from $\Al$ and $\Be$, but with the same number of elements as $\Be$, and let $\alpha \colon \Be \to \Ga$ be a bijection of sets.  
The function $\alpha$ extends naturally to a map $\Be^+ \to \Ga^+$.  
For $v \in \Be$, let $\tilde{v}$ denote $\alpha(v)$.  
Define a substitution $\psi \colon \Ga \to \Ga^+$ by
\begin{align*}
\psi(\tilde{v}_0) & = \tilde{v}_{01}\ldots \tilde{v}_{0r_0-1}\tilde{v}_{0r_0}'
\intertext{ if $v_0 = b\in\Be$, and }
\psi(\tilde{v}_i) & = \left\{ \begin{array}{ll} \tilde{v}_{01}\ldots \tilde{v}_{0r_0-1}\tilde{w}_i'\tilde{v}_{i1}\ldots \tilde{v}_{ir_i-1}\tilde{v}_{ir_i}' & \text{ if } r_i>0 \\
\tilde{v}_{01}\ldots \tilde{v}_{0r_0-1}\tilde{w}_i' & \text{ if } r_i=0 
\end{array}\right.
\end{align*}
for all $i > 0$.  

\begin{lemma}\label{LEM:primitivity}
The substitution $\psi \colon \Ga \to \Ga^+$ defined above is primitive.  
\end{lemma}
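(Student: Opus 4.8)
The plan is to read $\psi$ as the return-word substitution induced by $\sub^N$ and to deduce primitivity from the linear recurrence of $X_\sub$ (Theorem \ref{THM:linearly-recurrent}), which holds because $X_\sub$ is minimal. Write $\kappa\colon\Ga^\ast\to\Al^\ast$ for the monoid homomorphism determined by $\kappa(\tilde v)=v$ for $v\in\Be$, so that $\kappa(\psi^p(\tilde v))$ is the $\Al$-word obtained by spelling out the return words named by $\psi^p(\tilde v)$. Two structural facts will drive the argument. First, inspecting each clause of the definition of $\psi$ shows that \emph{every} image $\psi(\tilde v_i)$ begins with $\tilde v_{01}\cdots\tilde v_{0,r_0-1}$; since $r_0>1$, the letter $\tilde v_{01}$ (which lies in $\Ga$ because $v_{01}\in\Be$) occurs in every $\psi(\tilde v_i)$, and in particular $\psi(\tilde v_{01})$ begins with $\tilde v_{01}$. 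Thus in the dependency digraph of $\psi$ there is an edge from every vertex to $\tilde v_{01}$ together with a loop at $\tilde v_{01}$, and the words $\psi^p(\tilde v_{01})$ are nested as prefixes, so once a letter appears it persists. It therefore suffices to prove that $\psi^p(\tilde v_{01})$ contains every letter of $\Ga$ for some (hence all larger) $p$: since $\psi(\tilde v_i)\ni\tilde v_{01}$, this gives $\psi^{p+1}(\tilde v_i)\supseteq\psi^p(\tilde v_{01})\supseteq\Ga$ for all $i$, i.e.\ $M_\psi^{p+1}>0$.

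Second, because each $v\in\Be$ begins with $b$ and contains no other $b$, the occurrences of $b$ in any word $\kappa(\psi^q(\tilde v))$ are exactly the boundaries of its return-word blocks; hence the return-word decomposition of $\kappa(\psi^q(\tilde v))$ is precisely $\psi^q(\tilde v)$. Consequently, whenever the word $v_j b$ occurs in $\kappa(\psi^q(\tilde v))$, its leading $b$ is a block boundary and the block it opens is exactly $v_j$, so $\tilde v_j$ occurs in $\psi^q(\tilde v)$. The remaining task is then one of legality and length: I must show that $\kappa(\psi^p(\tilde v_{01}))$ is legal and long, and then invoke linear recurrence.

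Legality is where the boundary bookkeeping built into the definition of $\psi$ earns its keep, and this is the main obstacle. The key is the word identity $\sub^N(v)\,u=u\,\kappa(\psi(\tilde v))$ for every $v\in\Be$, which I would verify by checking the three clauses of the definition against $\sub^N(b)=uv_{01}\cdots v_{0r_0}$ and the block expansions of $\sub^N(v_i)$ (the primed words $v_{ir_i}'$, $w_i'$, $v_{0r_0}'$ are designed precisely so that the leading $u$ cancels). Extending multiplicatively gives $\sub^N(\kappa(\tilde W))\,u=u\,\kappa(\psi(\tilde W))$ for all $\tilde W\in\Ga^+$, and an induction then yields $C_p\,\kappa(\psi^p(\tilde v))=\sub^{pN}(v)\,C_p$, where $C_p=\sub^{(p-1)N}(u)\cdots\sub^N(u)\,u$. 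A short separate induction shows that $C_p$ is a prefix of $\sub^{pN}(b)$, hence of $\sub^{pN}(v_{01})$. Specialising to $v=v_{01}$, the word $C_p\,\kappa(\psi^p(\tilde v_{01}))=\sub^{pN}(v_{01})\,C_p$ is then a prefix of $\sub^{pN}(v_{01}b)$; since $v_{01}b$ is legal, so is this word, and therefore so is its factor $\kappa(\psi^p(\tilde v_{01}))$.

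Finally, comparing lengths in $C_p\,\kappa(\psi^p(\tilde v_{01}))=\sub^{pN}(v_{01})\,C_p$ gives $|\kappa(\psi^p(\tilde v_{01}))|=|\sub^{pN}(v_{01})|\to\infty$, because $v_{01}$ contains the expanding letter $b$. Linear recurrence supplies a constant $L$ such that every legal word of length exceeding $L|y|$ contains the legal word $y$; applying this to the finitely many words $y=v_jb$ (note that $\Be$ is finite, again by linear recurrence, since the return times to the legal letter $b$ are bounded), I conclude that for all sufficiently large $p$ the legal word $\kappa(\psi^p(\tilde v_{01}))$ contains every $v_jb$, whence $\psi^p(\tilde v_{01})$ contains every $\tilde v_j$. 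Combined with the first paragraph, this establishes that $\psi$ is primitive.
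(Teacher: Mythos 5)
Your proof is correct, and it follows the same broad strategy as the paper's---both arguments rest on linear recurrence of the minimal subshift $X_\sub$ together with the left-proper form of $\psi$ (every image begins with $\tilde{v}_{01}$)---but you execute the key transfer step by a genuinely different and more explicit mechanism. The paper works entirely on the $\sub$ side: it uses linear recurrence to find, for each $v\in\Be$, a power $n_v$ with $vb\subset \sub^{n_vN}(b)$, exploits the nesting $\sub^{kN}(b)\subset\sub^{lN}(b)$ to plant all the words $vb$ simultaneously inside $\sub^{lN}(b)$, and then, since every $\sub^N(w)$ begins with $uv_{01}$, locates all of them inside the block $\sub^{lN}(v_{01})$ of $\sub^{(l+1)N}(w)$ starting at index $|\sub^{lN}(u)|$---concluding directly that $\psi^{l+1}(\tilde{w})$ contains every letter of $\Ga$. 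The dictionary between occurrences of return words in that picture and letters of $\psi$-images is left implicit there (the careful index bookkeeping is the paper's only gesture toward it). You instead prove that dictionary outright: the intertwining identity $\sub^N(v)\,u = u\,\kappa(\psi(\tilde{v}))$ and its iterate $C_p\,\kappa(\psi^p(\tilde{v})) = \sub^{pN}(v)\,C_p$, which incidentally confirms that the primed words $v_{ir_i}'$, $w_i'$, $v_{0r_0}'$ were defined precisely to make the leading $u$ cancel. From this you get legality and the exact length of $\kappa(\psi^p(\tilde{v}_{01}))$, apply linear recurrence to that word to capture every $v_jb$, and convert occurrences back to letters of $\psi^p(\tilde{v}_{01})$ via uniqueness of the return-word decomposition ($b$'s occur exactly at block boundaries, since every element of $\Be$ starts with $b$ and contains no other). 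Your route is longer but buys full rigour at exactly the point the paper glosses, and the identity $C_p\,\kappa(\psi^p(\tilde{v})) = \sub^{pN}(v)\,C_p$ is of independent use (it is essentially why $\TS_\psi\cong\TS_\sub$, as used in Section \ref{SEC:examples}); your side remarks---that $\Be$ is finite because linear recurrence bounds return times to $b$, and that $|\sub^{pN}(v_{01})|\to\infty$ because $v_{01}$ contains the expanding letter $b$---are also correct and needed.
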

\begin{proof}
For all $v\in\Be$ there exists $n_v\in\N$ such that $vb$ is a subword of $\sub^{n_vN}(b)$.  
But the hypothesis that $b$ is a subword of $\sub^N(b)$ means that, for all $k\leq l$, $\sub^{kN}(b)$ is a subword of $\sub^{lN}(b)$.  
Thus, picking $l = \max_{v\in\Be} n_v$ means that, for all $v\in\Be$, $vb$ is a word of $\sub^{lN}(b)$.  
Because all of the words $\{ vb : v\in\Be\}$ can be found in $\sub^{lN}(b)$, and because any two of these can have overlap in at most their first or last letters, it is possible to find all of the elements of $\Be$ as subwords of $\sub^{lN}(b)$, no two of which share any common indices.  

Moreover, for all $w \in \Be$, $\sub^N(w)$ starts with $uv_{01}$ and $b$ is a subword of $v_{01}$, so $\sub^{(l+1)N}(w)$ contains every $v \in \Be$ within the block $\sub^{lN}(v_{01})$ that begins at index $|\sub^{lN}(u)|$.  

Then for all $w \in \Be$, $\psi(\tilde{w})$ starts with $\tilde{v}_{01}$, so $\psi^{l+1}(\tilde{w})$ contains $\tilde{v}$ for all $v \in \Be$.  
Therefore $\psi$ is primitive.  
\end{proof}

\subsection{Topological Conjugacy}\label{SUBSEC:topological-conjugacy}

The new substitution $\psi$ is related to $\sub$ (specifically, they give rise to homeomorphic tiling spaces---see Section \ref{SUBSEC:subshifts}), but it does not necessarily give rise to a topologically conjugate subshift.  
For this the following result, proved in \cite[Proposition 3.1]{D:main-result} and paraphrased here, will be useful.  

\begin{prop}\label{PROP:letter-expansion}
Let $\psi \colon \Ga \to \Ga^+$ be a primitive substitution and let $g$ be a map from $\Ga$ to $\Al^+$.  
Let $X_g \subset \Al^\Z$ denote the subshift generated by $g(X_\psi)$---that is, $X_g := \{ \sigma_\Al^n(g(x)) : x \in X_\psi, n \in \Z\}$.  
Then there exists an alphabet $\Ze$, a primitive substitution $\theta \colon \Ze \to \Ze^+$, and a map $h \colon \Ze \to \Al$ such that $h(X_\theta) = X_g$.
\end{prop}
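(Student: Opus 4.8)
The plan is to realise $X_g$ as a one-block factor of a primitive substitution living on an ``expanded'' alphabet that records, for each letter $c\in\Ga$, a position inside the image word $g(c)$. Write $\ell_c=|g(c)|$ and set $\Ze=\{(c,j): c\in\Ga,\ 0\le j\le \ell_c-1\}$. I would define the coding $h\colon\Ze\to\Al$ by letting $h((c,j))$ be the $(j{+}1)$st letter of $g(c)$, and the expansion morphism $E\colon\Ga^\ast\to\Ze^\ast$ on letters by $E(c)=(c,0)(c,1)\cdots(c,\ell_c-1)$, so that $h\circ E=g$ as maps $\Ga^\ast\to\Al^\ast$. The subshift I really want to produce is $Y:=\overline{\{\seq^n(E(x)):x\in X_\psi,\ n\in\Z\}}\subset\Ze^\Z$; since $h$ is a one-block code it commutes with the shift and $h(Y)=X_g$, so it suffices to build a primitive $\theta$ with $X_\theta=Y$.

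The substitution $\theta$ is essentially forced by the requirement that it intertwine with $\psi$ through $E$, namely $\theta\circ E=E\circ\psi$. Concretely I would define $\theta$ on the block $E(c)$ by cutting the word $E(\psi(c))$ (of length $|g(\psi(c))|$) into $\ell_c$ consecutive nonempty pieces and declaring these to be $\theta((c,0)),\ldots,\theta((c,\ell_c-1))$; then $\theta(E(c))=E(\psi(c))$ for every letter, hence $\theta\circ E=E\circ\psi$ and, inductively, $\theta^n\circ E=E\circ\psi^n$. Before cutting I first replace $\psi$ by a power (legitimate since $X_{\psi^p}=X_\psi$ for primitive $\psi$) so that $|g(\psi(c))|\ge 2\ell_c$ for all $c$; this is possible because $|g(w)|\ge|w|$ and $|\psi^p(c)|\to\infty$, the only exception being the trivial case where $\psi$ is the identity on a one-letter alphabet, in which $X_g$ is a single periodic orbit and the statement is elementary. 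With this slack in hand I take the cut to be as balanced as possible, so that every piece, and hence every image $\theta((c,j))$, has length at least two. This balancing is the crucial design choice: a careless cut (for example making most images single letters) can destroy primitivity, as already happens for $\psi\colon c\mapsto cc$, $g\colon c\mapsto ab$.

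With all letters of $\Ze$ expanding (each image has length $\ge2$, so $|\theta^n((c,j))|\ge 2^n$), I would prove primitivity as follows. Each $\theta^n((c,j))$ is a subword of $\theta^n(E(c))=E(\psi^n(c))$, so after trimming its two end blocks it equals $E(w)$ for a subword $w$ of $\psi^n(c)$ with $|w|\to\infty$. Since $\psi$ is primitive, $X_\psi$ is minimal and hence linearly recurrent (Theorem \ref{THM:linearly-recurrent}), so once $|w|$ is large it contains every letter of $\Ga$; then $E(w)$, and therefore $\theta^n((c,j))$, contains every block $E(c')$ and in particular every letter of $\Ze$. As this holds uniformly for large $n$, the matrix of $\theta$ is eventually strictly positive, i.e. $\theta$ is primitive.

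It remains to identify $X_\theta$ with $Y$. The inclusion $Y\subseteq X_\theta$ is routine: any finite subword of $E(x)$ sits inside $E(u)$, a factor of $E(\psi^n(c))=\theta^n(E(c))$ for a subword $u\subset x$, so it is admitted by $\theta$. The reverse inclusion is where the real work lies, and is the step I expect to be the main obstacle: I must show the block structure is recognisable, that is, that every $y\in X_\theta$ parses uniquely as a bi-infinite concatenation of blocks $E(c)$. The key observation is that every $\theta$-admitted word is literally a factor of a concatenation of genuine blocks $E(c')$ (being a factor of some $E(\psi^n(c))$), and within such a concatenation the second coordinate equals $0$ exactly at block boundaries, with gaps bounded by $\max_c\ell_c$. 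Hence in $y$ the positions carrying second coordinate $0$ are syndetic and cut $y$ into genuine blocks, yielding $y=\seq^k(E(x))$ for a unique $x\in\Ga^\Z$; reading off the blocks shows every subword of $x$ is $\psi$-admitted, so $x\in X_\psi$ and $y\in Y$. Therefore $X_\theta=Y$ and $h(X_\theta)=h(Y)=X_g$, as required.
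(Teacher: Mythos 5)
Your proposal is correct and is essentially the construction the paper relies on: the paper does not prove Proposition \ref{PROP:letter-expansion} itself but cites \cite[Proposition 3.1]{D:main-result}, whose alphabet is exactly your $\Ze$ of pairs $(c,j)$ with $h$ a one-block code, and whose substitution is obtained by the same expansion-and-cutting of $E(\psi(c))$ into $|g(c)|$ pieces after passing to a power of $\psi$. Your balanced cut guaranteeing all images have length at least two, the primitivity argument via linear recurrence, and the parsing of $X_\theta$ by the second-coordinate-zero markers are correct fillings-in of the details that the paper leaves to Durand's proof.
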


We can apply this result to the current setting by using the substitution $\psi \colon \Ga \to \Ga^+$ defined above, which was shown to be primitive in Lemma \ref{LEM:primitivity}, and the map $g \colon \Ga \to \Al^+$ defined by $g(\tilde{v}_i) = v_i$, where $v_i \in \Al^+$ is viewed as a word possibly consisting of more than one letter.  
Then the subshift $X_g$ from the statement of Proposition \ref{PROP:letter-expansion} is exactly the original substitution subshift $X_\sub$.  
Therefore Proposition \ref{PROP:letter-expansion} guarantees the existence of a factor map---in fact, a one-block code \cite{LM:book}---from a primitive substitution subshift $X_\theta$ to the given minimal substitution subshift $X_\sub$.  

If we look at how $\Ze$ and $\theta$ are defined in the proof of Proposition \ref{PROP:letter-expansion} in \cite{D:main-result}, then it becomes clear that the factor map $h$ is in fact a topological conjugacy---i.e., it has an inverse that is also a factor map.  
Indeed, $\Ze$ is the set of all pairs $(\tilde{v},k)$, where $v \in\Be$ and $1 \leq k \leq |v|$.  
Every sequence $w \in X_\sub$ can be represented uniquely as a concatenation of return words $v \in \Be$ (with the origin possibly contained in the interior of such a word).  
Then there is a map $p: X_\sub\to \Ze^\Z$ defined in the following way on a sequence $w\in X_\sub$: If $w_j$ falls at position $k$ in the return word $v_i$, then $p(w)_j = (\tilde{v}_i,k)$.  
This is a sliding block code with block size equal to $\max_{v \in \Be}|v|$, and the one-block code $h$ is its inverse.  
The usefulness of Proposition \ref{PROP:letter-expansion} is in showing that $p(X_\sub)$ is in fact a primitive substitution subshift, which completes the proof of Theorem \ref{THM:main}.

\section{Examples and Applications}\label{SEC:examples}

The primitive substitution subshift $X_\theta$ is topologically conjugate to the original minimal subshift $X_\sub$, which is a very strong condition, but this comes at a price: if we follow the recipe from \cite[Proposition 3.1]{D:main-result} strictly, then the new alphabet $\Ze$ may be quite large---see the discussion after Proposition \ref{PROP:cohomology-rank}, below, for an example in which $|\Al| = 2$, $|\Ga| = 3$ and $|\Ze| = 9$.  
For some computational purposes, particularly purposes involving tiling spaces, the substitution $\psi\colon \Ga \to \Ga^+$ can be just as good as $\theta$, and typically uses a smaller alphabet.  

Consider the substitutions $\sub \colon \Al \to \Al^+$ and $\psi \colon \Ga \to \Ga^+$ from Theorem \ref{THM:main}, and the map $\alpha \colon \Ga \to \Be^+ \subset \Al^+$.  
Then the tiling spaces $\TS_\sub$ and $\TS_\psi$ are homeomorphic via the map 
\begin{align*}
f \colon \TS_\psi & \to \TS_\sub \\
(w,t) & \mapsto (\sigma^{\lfloor \tilde{t} \rfloor}(\alpha(w)), \tilde{t}-\lfloor \tilde{t} \rfloor),
\end{align*}
where $\tilde{t}=|\alpha(w_0)|\cdot t$.  

This means that, for practical purposes, we can use $\TS_\psi$ to compute the topological invariants of $\TS_\sub$.  
This is the approach in the following examples and applications, which illustrate the construction outlined in Section \ref{SEC:minimal}.  
The first example illustrates some of the greater `freedom' in behaviour exhibited by minimal non-primitive substitutions on small alphabets.
	
	Recall from \cite{GM:multi-one-d} and \cite{R:mixed-subs} that, if $\TS_\sub$ is the tiling space associated to a primitive substitution $\sub$ on an alphabet $\Al$ with $k$ letters, the rank of the first \Cech cohomology $\check{H}^1$ of $\TS_\sub$ is bounded above by $k^2-k+1$ and this bound is tight.  Recall from \cite{BDH:asymp-orbits} that $X_\sub$ has at most $k^2$ asymptotic orbits (equivalently, $\TS_\sub$ has at most $k^2$ asymptotic arc components) and this bound is tight.  These results both fail spectacularly if we allow for non-primitive minimal substitutions---this result suggests that the alphabet size is not as much of a limiting factor with respect to the topological and dynamical properties of a substitution.
	\begin{prop}\label{PROP:cohomology-rank}
		Let $\Al = \{a, b\}$ be an alphabet on only two letters. For all $n \geq 2$ there exists a minimal substitution $\sub_n \colon \Al \to \Al^+$ such that $\check{H}^1(\TS_{\sub_n})$ has rank $n$ and $X_{\sub_n}$ has at least $n$ asymptotic orbits.
	\end{prop}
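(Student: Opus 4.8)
The plan is to exhibit an explicit family and then transport every computation to the primitive return-word substitution supplied by Section \ref{SEC:minimal}. For $n\geq 2$ define $\sub_n\colon\Al\to\Al^+$ by $\sub_n(a)=a$ and
\[
\sub_n(b)=bab\,a^2b\,a^3b\cdots a^nb,
\]
so that $a$ is bounded and $b$ is expanding. A direct check shows that the only $a$-run lengths produced by iterating $\sub_n$ lie in $\{1,\dots,n\}$: the internal gaps of $\sub_n(b)$ are $1,\dots,n$; a gap $a^k$ sits between the trailing $b$ and the leading $b$ of adjacent images of $b$ and so is preserved; and distinct gaps never merge. Hence $a^{n+1}$ is not admitted, the set of bounded legal words is finite, and $\sub_n$ is tame. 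The return words to $b$ are then exactly $v_j=ba^j$ for $1\leq j\leq n$.

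First I would establish minimality. Since $a^{n+1}$ is inadmissible, no sequence of $X_{\sub_n}$ can contain an infinite run of $a$, so every element of $X_{\sub_n}$ contains infinitely many copies of $b$ in both directions and decomposes uniquely into return words. Following Section \ref{SUBSEC:new-sub}, the induced return-word substitution is
\[
\psi_n\colon \Ga\to\Ga^+,\qquad \psi_n(\tilde v_j)=\tilde v_1\tilde v_2\cdots\tilde v_n\tilde v_j,
\]
whose matrix $I+J$ (the identity plus the all-ones matrix) is strictly positive, so $\psi_n$ is primitive and $X_{\psi_n}$ is minimal. As the return-word recoding carries orbits bijectively to orbits (equivalently, by Theorem \ref{THM:linearly-recurrent} applied to the evident linear recurrence), $X_{\sub_n}$ is minimal as well. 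The recoding also yields a homeomorphism $\TS_{\sub_n}\cong\TS_{\psi_n}$ (the homeomorphism $f$ of Section \ref{SEC:examples}, built from the inclusion $\Ga\hookrightarrow\Be^+\subset\Al^+$), so it suffices to compute the invariants of the primitive space $\TS_{\psi_n}$.

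For the cohomology I would use the Anderson--Putnam method. Every image $\psi_n(\tilde v_j)$ begins with $\tilde v_1$, so the right border is forced, whereas the left border is not; I therefore collar on the left. The collared tiles are the $3n-2$ legal $2$-blocks, and the collared substitution has the uniform shape $(c,a)\mapsto (c,1)\,(1,2)(2,3)\cdots(n-1,n)\,(n,a)$, which makes its matrix highly structured. I expect the eigenvalue bookkeeping to be the main obstacle: one must verify that the collared matrix has eigenvalues $n+1$, $1$ with multiplicity $2n-2$, and $0$ with multiplicity $n-1$; that the collared complex $\Gamma$ has exactly $n$ vertex classes (one per junction letter), whence $H^1(\Gamma)$ has rank $2n-1$; and that the induced vertex map is the identity, which removes a copy of $1$ to multiplicity $n-1$. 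What survives on $H^1(\Gamma)$ is the eigenvalue $n+1$ together with $1$ to multiplicity $n-1$ and $0$ to multiplicity $n-1$, so the rank of $\varinjlim(H^1(\Gamma),f^*)$, being the number of nonzero eigenvalues, equals $n$. This gives $\operatorname{rk}\check H^1(\TS_{\sub_n})=n$. (For $n=2$ the collared matrix has spectrum $\{0,1,1,3\}$ and one recovers rank $2$, a useful sanity check.)

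Finally, for the asymptotic orbits I would produce them by hand in $X_{\psi_n}$, the asymptotic structure being invariant under the homeomorphism above. Since each $\psi_n(\tilde v_j)$ ends in $\tilde v_j$ with penultimate letter $\tilde v_n$, for every $j$ there is a left-infinite $\psi_n$-fixed word $\lambda_j$ terminating in $\tilde v_j$, and these $n$ words are pairwise distinct; since every image begins with $\tilde v_1$ there is a single right-infinite fixed word $\rho$. The $n$ points $z^{(j)}=\lambda_j.\rho$ all agree on $[0,\infty)$ and differ at position $-1$, so they are pairwise forward asymptotic, and aperiodicity of $\rho$ forces them into $n$ distinct orbits. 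Hence $X_{\sub_n}$ has at least $n$ asymptotic orbits, completing the proof.
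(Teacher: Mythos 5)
Your family is, up to exchanging the names of $a$ and $b$, exactly the one used in the paper: the paper takes $\sub_n \colon a \mapsto ab\,ab^2\cdots ab^n\,a$, $b \mapsto b$, which is your word $b\,ab\,a^2b\cdots a^nb$ re-bracketed after the relabelling, and both produce the same return-word substitution $\psi_n \colon i \mapsto 12\cdots n\,i$, the same transfer of invariants through the tiling-space homeomorphism $\TS_{\sub_n}\cong\TS_{\psi_n}$, and essentially the same asymptotic-orbit argument (left-infinite fixed tails ending in the distinct letters $i$, one common right-infinite tail beginning with $1$, legal seeds $i.1$). The genuine divergence is the cohomology step. The paper observes that $\psi_n$ is left-proper and invokes the standard result that then $\check{H}^1(\TS_{\psi_n}) \cong \varinjlim(\Z^n, M_n^T)$ with $M_n = \mathbf{1}_n + I_n$, which is visibly of full rank, so the rank-$n$ conclusion is a one-liner. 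You instead rebuild the left-collared AP complex: your bookkeeping is in fact correct --- the collared alphabet consists of the $3n-2$ legal $2$-blocks, the column of the collared letter $(c,a)$ is $e_{(c,1)}+\sum_{i=1}^{n-1}e_{(i,i+1)}+e_{(n,a)}$, from which one checks that the matrix has rank $2n-1$ and that $\ker(M-I)$ has dimension $2n-2$, the complex has $n$ vertices fixed by the induced map, and the surviving nonzero spectrum on $H^1(\Gamma)$ is $n+1$ together with $1$ of multiplicity $n-1$, giving rank $n$ --- but you flag these multiplicity claims with ``one must verify'' rather than proving them, and the left-proper shortcut makes the entire verification unnecessary. (Your AP route also tacitly uses aperiodicity/recognisability of $\psi_n$; the paper gets this from non-periodicity plus Moss\'{e}, and your appeal to ``aperiodicity of $\rho$'' in the orbit-counting step needs the same input.) One point where you do more than the paper: you sketch a proof of minimality via the return-word recoding, and this works cleanly here precisely because the prefix $u$ is empty, so the intertwining $g\circ\psi_n = \sub_n\circ g$ holds exactly and $X_{\sub_n}$ is the expansion of the minimal primitive subshift $X_{\psi_n}$; the paper simply leaves minimality to the reader.
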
	
	\begin{proof}
	We construct $\sub_n$ explicitly and use the methods from Section \ref{SEC:minimal} to prove the claim.	
	
		We define our family of substitutions $\sub_n$ by
		$$
		\sub_n \colon
			\left\{
				\begin{array}{rcl}
					a & \mapsto & ab \, ab^2 \, \ldots ab^{n-1} \, ab^n \, a \\
					b & \mapsto & b
				\end{array}
			\right.
		$$
		We leave confirmation of minimality of the substitution $\sub_n$ to the reader. The decomposition $\Al = \Al_{\infty} \sqcup \Al_B = \{a\} \sqcup \{b\}$ is quickly found and, as $\sub_n$ satisfies the hypotheses of Corollary \ref{COR:leftmost-rightmost1} we know that $a$ can be used as the seed letter for our return words. The set $S = \{ ab^i.b^ja \mid i+j\leq n,\: 0\leq i, j \}$ defined in Remark \ref{REM:find-seed} has a fixed point under $f^1 (=\mbox{Id}_S)$, and $\sub_n(a)$ contains at least two distinct copies of $a$ so we can choose $N=1$.
		
		The return words to $a$ are $\Be_n = \{ ab^i \mid 1\leq i\leq n \}$. Let $v_i = ab^i$. The word $\sub(a)$ can be written as $u v_{01} \ldots v_{0r_0}$ with $u = \epsilon$ the empty word, $r_0 = n+1$, $v_{0i} = ab^i = v_i$ for $1 \leq i \leq n$ and $v_{0r_0} = a$. For each $1 \leq i \leq n$ we can write $\sub(v_i) = \sub(a) w_i$ with $w_i = b^i$ and we note that $r_i = 0$ for each $i\geq 1$. So, $w'_i = v_{0r_0} w_i u = a b^i = v_i$.
		
		We form $\Ga_n = \{ \tilde{v} \mid v \in \Be_n \} = \{ \tilde{v}_i \mid 1\leq i \leq n\}$ and define, for each $1 \leq i \leq n$ the substitution $\psi_n \colon \Ga_n \to \Ga_n^+$ by
		$$
			\begin{array}{rcl}
				\psi_n(\tilde{v}_i) & = & \tilde{v}_{01} \tilde{v}_{02} \ldots \tilde{v}_{0n} \tilde{w}'_{i} \\
				& = & \tilde{v}_1 \tilde{v}_2  \ldots \tilde{v}_n \tilde{v}_i.
			\end{array}
		$$
		This can more succinctly be written on the alphabet $\{1,\ldots, n\}$ as
		$$\psi_n\colon i \mapsto 1 2  \ldots n i.$$

		By Lemma \ref{LEM:primitivity} and the discussion in Section \ref{SUBSEC:subshifts}, $\psi_n$ is a primitive substitution whose tiling space $\TS_{\psi_n}$ is homeomorphic to $\TS_{\sub_n}$.  By the non-periodicity of $\sub_n$ and a result of Moss\'{e} \cite{M:aperiodic}, $\psi_n$ is recognisable. We notice that $\psi_n$ is also a left-proper substitution and so by standard results about left-proper substitutions (see \cite{S:book}), the first \Cech cohomology of $\TS_{\psi_n}$ (and hence of $\TS_{\sub_n}$) is given by the direct limit of the transpose of the incidence matrix of $\psi_n$ acting on the group $\Z^n$.

		The incidence matrix of $\psi_n$ is the symmetric matrix $M_n = \mathbf{1}_n + I_n$ where $\mathbf{1}_n$ is the $n\times n$ matrix of all $1$s, and $I_n$ is the $n\times n$ identity matrix. It is easy to check that $M_n$ has full rank and so $$\mbox{rk} \check{H}^1(\TS_{\sub_n}) = \mbox{rk} \check{H}^1(\TS_{\psi_n}) = \mbox{rk} \varinjlim(M_n) = n.$$
		
		To prove the claim about asymptotic orbits, we note that there exists a right infinite sequence $v$ such that for every $i\in \{1,2,\ldots, n\}$ there exists a left infinite sequence $u_i$ (found by repeated substitution on the sequence $i.1$) such that $u_ii.1v$ is a point in $X_{\psi_n}$. By construction then, $u_ii.1v = u_jj.1v$ if and only if $i=j$, and the bi-infinite sequences $u_ii.1v$ and $u_jj.1v$ agree on all indices right of the origin for all pairs $i,j$. It follows that each pair $i,j$ leads to a right asymptotic pair of orbits in $X_{\psi_n}$ and so there exist at least $n$ asymptotic orbits.
		
		Equivalently then, $\TS_{\psi_n}$ has at least $n$ asymptotic arc components. These are preserved under homeomorphism and so $\TS_{\sub_n}$ also has at least $n$ asymptotic arc components. Equivalently, $X_{\sub_n}$ has at least $n$ asymptotic orbits.
	\end{proof}

	We emphasise that our algorithm for producing a primitive substitution with a homeomorphic tiling space is much more computationally simple than the corresponding algorithm for the substitution with a topologically conjugate subshift. As an example, compare the presentation and relatively short computation of the substitution $\psi_3$ in the above proof with the following. The reader is invited to verify, following the proof of \cite[Proposition 3.1]{D:main-result}, that the substitution $\theta$ defined by
$$
\begin{array}{lcllcllcl}
\theta(A) & = & AB        & \theta(L) & = & AB      & \theta(W) & = & AB     \\
\theta(B) & = & LMNWXYZAB & \theta(M) & = & LMN     & \theta(X) & = & LMN    \\
          &   &           & \theta(N) & = & WXYZLMN & \theta(Y) & = & WXYZ   \\
          &   &           &           &   &         & \theta(Z) & = & WXYZ   
\end{array}
$$
produces a subshift that is topologically conjugate to $\sub_3$, where the conjugacy $$h \colon \{A,B,L,M,N,W,X,Y,Z\} \to \{a,b\}$$ is given by $h(A) = h(L) = h(W) = a$ and $h(B) = h(M) = h(N) = h(X) = h(Y) = h(Z) = b$.  
(Of course, it is clear that a smaller alphabet can be used; this is what is obtained when the recipe is followed without modification.)

	The reader is encouraged to try the example $\sub \colon a\mapsto acb,\: b\mapsto adb,\: c\mapsto dd,\: d\mapsto d$ where the function $f$ is not a bijection; and to also try the example of the non-primitive Chacon substitution $\sub \colon a\mapsto aaba,\: b\mapsto b$ where $\Be$ contains the single letter return word $a$ and so $v_0$ needs to be treated.
	
	\section{The Non-Minimal Case}\label{SEC:non-minimal}

	In this section, we now turn our attention to the case of those substitutions which give rise to non-minimal subshifts. We call such substitutions \emph{non-minimal substitutions}.

In the primitive case, a standard approach is to replace the alphabet $\Al$ with a new alphabet consisting of \defemph{collared letters}, which are copies of the letters from $\Al$ containing extra information about the letters that appear around them in elements of $X_\sub$.  
It is important to be careful about how we extend this idea to the non-minimal case, as a non-minimal substitution is non-primitive, so in particular it may have letters in its alphabet that do not appear in any element of $X_\sub$.  
The $n$-collared alphabet, which we will define below, is a subset of $\Al^{2n+1}$, where a $(2n+1)$-letter word should be thought of as a single letter---the one at position $n+1$---along with extra information about the $n$ letters that lie to either side of it.  

View the words in $\Al^{2n+1}$ as being indexed so that their middle letter is at position $0$.  
Let $a \in \Al$ and let $u = a_{-n}\ldots a_{-1} a a_1 \ldots a_n \in \Al^{2n+1}$ be a word whose central letter is $a$; then we define an \emph{$n$-collared letter} to be this formal pair $(a,u)$ and denote it by $a_u$.

	If $u$ is the word $ c_1 c_2 \ldots c_l \in \Al^*$, then for those $i$ and $n$ where it is well-defined, let $c_i(n)$ be the subword $c_{i-n} \ldots c_i \ldots c_{i+n}$.
	Suppose $\sub(a) = b_1 \ldots b_k$ and let $a_u$ be an $n$-collared letter. Note that $b_1 \ldots b_k$ is a subword of $\sub(u)$, so we can define $b_i(n)$ for each $1\leq i \leq k$.

There is an induced substitution $\sub_n'$ defined on $\Al^{2n+1}$ by 
	$$\sub_n'(a) =
	({b_1})_{b_1(n)}\ldots ({b_k})_{b_k(n)}.$$

Let us define an $n$-collared substitution, $\sub_n$, by restricting $\sub_n'$ to a sub-alphabet $\Al_n\subset \Al^{2n+1}$. The definition of such a subalphabet requires some work.

In defining the collared version of a substitution, it is tempting to take as a new alphabet the set of all collared letters $a_u$ such that $u$ is a legal word for $\sub$. The problem is that this alphabet essentially ignores the illegal letters in the original alphabet. Illegal letters are still crucial in generating pieces of the subshift which nevertheless do not contain the illegal letter. We are then in the position of having to include collared versions of the illegal letters into our new alphabet in a well-defined and consistent manner. There are various valid methods of doing this and the following is only one approach of many.

We may suppose $\Al$ contains at least one letter, say $b$, otherwise $X_\sub$ is empty.  
Then we define $\Al_n$ and $\sub_n$ in terms of this $b$ (so they are not, in general, unique, although we will show below in Proposition \ref{PROP:collared-substitution} that the resulting subshift is always topologically conjugate to $X_\sub$).

\begin{defn}\label{DEF:collar}
Let $n$ be a non-negative integer and let $\sub$ be a substitution on $\Al$.  
Define the following two sets.  
\begin{align*}
\Al_n' & := \{ a_u\in \Al^{2n+1} : u \text{ is legal } \} \\
\Al_n'' & := \{ a_u \in \Al^{2n+1} : u = a_{-n}\cdots a_n,  a_i = b \text{ for all } i\neq 0, a = a_0  \text{ is illegal }\}.   
\end{align*}
Then define the \defemph{$n$-collared alphabet} 
\[
\Al_n := \{ a_u \in \Al^{2n+1} : \text{ there exists } k\geq 0 \text{ such that } a_u \subset \sub_n'^k(c_v) \text{ for some } c_v\in \Al_n' \cup \Al_n''\}.  
\]
Define the \defemph{$n$-collared substitution} $\sub_n$ to be the restriction of $\sub_n'$ to $\Al_n$.  

These definitions depend upon the choice of letter $b$, but let us suppose that a letter $b$ has been chosen, and that, for all $n$, $\Al_n$ and $\sub_n$ have been defined using this letter.  
\end{defn}

It is clear from the way that $\Al_n$ is defined that $\sub_n(a_u) \in \Al_n^*$ for all $a_u\in\Al_n$, so $\sub_n$ is indeed a substitution on $\Al_n$.  
Next let us show that $X_{\sub_n}$ is topologically conjugate to $X_\sub$.  

	Every bi-infinite sequence in $X_\sub$ can be rewritten using $n$-collared letters using the local rule that if $u$ is the $(2n+1)$-letter word which contains the symbol $a$ at its center, then this instance of $a$ is replaced by $a_u$. This map embeds the subshift $X_\sub$ into the full shift $(\Al_n')^\Z\subseteq \Al_n^\Z$. Call this embedding $\iota_n \colon X_\sub \to \Al_n^\Z$. This map $\iota_n$ is clearly a topological conjugacy onto its image (the inverse is given by forgetting the collaring $a_u \mapsto a$).

\begin{prop}\label{PROP:collared-substitution}
$X_{\sub_n} = \iota_n(X_\sub)$.  
\end{prop}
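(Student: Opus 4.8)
The plan is to prove the two inclusions $\iota_n(X_\sub)\subseteq X_{\sub_n}$ and $X_{\sub_n}\subseteq \iota_n(X_\sub)$ separately; the engine for both is a pair of compatibility facts relating $\sub$, the uncollared substitution $\sub_n'$, and the forgetful map $\pi$ that sends a collared letter $a_u$ to its central letter $a$ (extended to words and to bi-infinite sequences). First I would record the identity $\pi\circ\sub_n' = \sub\circ\pi$, which is immediate from $\sub_n'(a_u)=(b_1)_{b_1(n)}\ldots(b_k)_{b_k(n)}$ where $\sub(a)=b_1\ldots b_k$. Second, and more importantly, I would isolate the \emph{locality} of collaring under substitution: the collar attached to the $i$th letter of $\sub_n'(a_u)$ is the $(2n+1)$-window centred at that letter inside $\sub(u)$, so any collar appearing in $(\sub_n')^{k}(a_u)$ whose window lies entirely inside the central block $\sub^k(a)$ is determined by $\sub^k(a)$ alone, independently of the rest of $u$. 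Because every image $\sub(c)$ has length at least $1$, a short index count shows that this is the case precisely for the letters lying at distance at least $n$ from the two ends of $\sub^k(a)$.

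For $\iota_n(X_\sub)\subseteq X_{\sub_n}$, I would take $w\in X_\sub$ and an arbitrary finite factor $W=\iota_n(w)_{[p,q]}$, and show that $W$ is admitted by $\sub_n$. Since $\Al_n$ is $\sub_n'$-invariant and $\Al_n'\cup\Al_n''\subseteq\Al_n$, it suffices to find $d_s\in\Al_n'\cup\Al_n''$ and $l\geq 0$ with $W\subset(\sub_n')^{l}(d_s)$. The key observation is that $W$ is determined by $v:=w_{[p-n,q+n]}$ alone, since each collar occurring in $W$ reads context only inside $v$. As $v$ is a factor of $w\in X_\sub$ it is admitted, so $v\subset\sub^k(a)$ for some $a\in\Al$. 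I would then pick a seed $a_U$ with central letter $a$: a legal window $U$ if $a$ is legal, so that $a_U\in\Al_n'$, and the dummy window $b\cdots b\,a\,b\cdots b$ if $a$ is illegal, so that $a_U\in\Al_n''$. By the locality fact, the collars of the central letters of $v$ computed inside $(\sub_n')^{k}(a_U)$ agree with those prescribed by $W$, since their windows stay inside $\sub^k(a)$; hence $W$ occurs as a factor of $(\sub_n')^{k}(a_U)$.

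For the reverse inclusion $X_{\sub_n}\subseteq\iota_n(X_\sub)$, I would take $Z\in X_{\sub_n}$ and set $w:=\pi(Z)$. Applying $\pi$ to a factor $Z_{[p,q]}\subset(\sub_n')^{l}(d_s)$ and using $\pi\circ\sub_n'=\sub\circ\pi$ yields $w_{[p,q]}\subset\sub^{l}(\pi(d_s))$, so every factor of $w$ is admitted and $w\in X_\sub$. It then remains to check that $Z=\iota_n(w)$, i.e.\ that the collar carried by each $Z_i$ is exactly the window $w_{[i-n,i+n]}$. For this I would show that $\sub_n'$ preserves \emph{consistency}, meaning that adjacent collars agree on their overlap: the collars inside $\sub_n'(a_u)$ are all windows of the single word $\sub(u)$, and across the junction in $\sub_n'(a_u)\sub_n'(a'_{u'})$ of a consistent adjacent pair they agree because $\sub(u)$ and $\sub(u')$ share a common central block. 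By induction each $(\sub_n')^{l}(d_s)$ is consistent, hence so is every factor of $Z$ and therefore $Z$ itself; and a globally consistent bi-infinite sequence satisfies $Z=\iota_n(\pi(Z))$.

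The hard part will be the index bookkeeping behind the locality and consistency statements rather than any conceptual difficulty. One must verify carefully that a $(2n+1)$-window centred on a sufficiently interior letter is insensitive to the (possibly illegal or dummy) context carried by the seed, and that this interior regime covers exactly the letters constituting $W$. The auxiliary letters $\Al_n''$ are what make the forward inclusion succeed for factors of $X_\sub$ that arise only from illegal letters, and the reassurance that these extra seeds do not enlarge the subshift is supplied in the reverse inclusion by the identity $\pi\circ\sub_n'=\sub\circ\pi$; reconciling these two roles is the delicate point.
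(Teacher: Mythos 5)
Your proposal is correct and follows essentially the same route as the paper's proof: both inclusions are established factor-by-factor, with a factor of $\iota_n(w)$ extended by $n$ letters on each side, realised inside some $\sub^k(a)$, and lifted via a seed from $\Al_n' \cup \Al_n''$ (legal window or dummy $b$-collar), while the reverse inclusion forgets collars and uses $\pi \circ \sub_n' = \sub \circ \pi$. Your explicit locality and consistency lemmas merely flesh out steps the paper compresses into ``by construction, $\sub_n^k(a_s)$ contains the collared word $u$'' and into the tacit assertion that showing $\pi(w') \in X_\sub$ suffices for surjectivity (which indeed rests on every element of $X_{\sub_n}$ carrying consistent collars, exactly as you verify).
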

\begin{proof}
$\iota_n$ is injective, so to prove the claim it will suffice to show that it maps $X_\sub$ surjectively onto $X_{\sub_n}$.  
Pick $w\in X_\sub$ and consider its image $\iota_n(w)$.  
To show that this image is in $X_{\sub_n}$, it is enough to show that, for an arbitrary $n$-collared word $u\subset \iota_n(w)$ it is true that $u$ is in the language of $\sub_n$.  

Say that $u$ is the subword of $\iota_n(w)$ that begins at index $i$ and ends at index $j>i$.  
Consider the word $v\subset w$ that begins at index $i-n$ and ends at index $j+n$.  
As $v\subset w$, it is in the language of $\sub$, and so there is some letter $a\in \Al$ and $k\geq 0$ such that $v\subset \sub^k(a)$.  
$\Al_n$ contains collared versions of all legal and illegal letters, so there is a collared letter $a_s \in \Al_n$.  
By construction, $\sub_n^k(a_s)$ contains the collared word $u$.  
Since $u\subset \iota_n(w)$ was arbitrary, $\iota_n(w)\in X_{\sub_n}$.  
Thus $\iota_n(X_\sub)\subseteq X_{\sub_n}$.  

For $w'\in X_{\sub_n}$, let $w\in\Al^\Z$ be the word obtained from $w'$ by forgetting collars.  
To prove that $\iota_n$ is surjective, it will suffice to show that, for arbitrary $w'\in X_{\sub_n}$, it is true that $w\in X_\sub$.  

Pick an arbitrary $u\subset w$.  
Say that $u$ is the subword of $w$ that begins at index $i$ and ends at index $j>i$.  
Consider the word $u'\subset w'$ that begins at index $i-n$ and ends at index $j+n$.  
As $u'\subset w'$, it is in the language of $\sub_n$, and so there is some collared letter $a_s\in \Al_n$ and $k\geq 0$ such that $u'\subset \sub_n^k(a_s)$.  
But then $\sub^k(a)$ contains the word $u$, and hence $u$ is in the language of $\sub$.  
Since $u\subset w$ was arbitrary, this means $w\in X_\sub$, and so $\iota_n$ is surjective.  
\end{proof}

	Under the assumption that $\Al_n$ and $\Al_m$ are defined using the same letter $b$, there is a forgetful map $f_{n,m} \colon \Al_n \to \Al_m$ where, if $$u = a_{-n} \ldots a_{-m} \ldots a_{-1} a a_1 \ldots a_m \ldots a_n$$ and $$v = a_{-m} \ldots a_{-1} a a_1 \ldots a_m $$ then we define $f_{n,m}(a_u) = a_v$.
	We can extend this forgetful map to $\Al_n^*$ and $\Al_n^\Z$.
	
	This $n$-collared substitution commutes with the forgetful maps.
	That is, $$\sub_m \circ f_{n,m} = f_{n,m} \circ \sub_n.$$
	If $l<m<n$, then $f_{m,l}\circ f_{n,m} = f_{n,l}$.  
	Note that $\Al_0 = \Al$ and $\sub_0 = \sub$, and by Proposition \ref{PROP:collared-substitution}, $f_{n,0}$ is a topological conjugacy, from which it follows that $f_{n,m}\colon X_{\sub_n} \to X_{\sub_m}$ is a topological conjugacy for all $n \geq m \geq 0$.

Recall the following definition.
A substitution \defemph{forces the border at level $k$} if every appearance of $\sub^k(a)$ as a $k$-supertile extends uniquely for all $a\in \Al$.  That is, if $w \in X_\sub$ is an admitted bi-infinite sequence and it is decomposed into words of the form $\sub^k(a)$ for letters $a\in\Al$, then for every $a$, there exist unique letters $a_l, a_r$ such that every appearance of $\sub^k(a)$ in the decomposition of $w$ sits as an interior subword of a word of the form $a_l\sub^k(a)a_r$.
	
	\begin{lemma}\label{LEM:border-forcing}
		Let $\sub$ be a tame substitution. Let $N$ be one greater than the maximum length of any bounded word for $\sub$, $N = \max_{u\in B} |u|+1$. The substitution $\sub_N \colon \Al_N\to \Al_N^+$ forces the border at some level $k$.
	\end{lemma}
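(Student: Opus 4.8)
The plan is to choose the level $k$ large enough that each collar, after being expanded by $\sub^k$, reaches far enough to pin down the \emph{full} collar of the neighbouring collared letter. The key structural input is the choice of $N$: since $\sub$ is tame, the set $B$ of bounded legal words is finite, so $N$ is well defined and finite, and moreover every legal word of length $N$ must contain an expanding letter---otherwise it would be a bounded legal word of length $N > \max_{u\in B}|u|$, contradicting the definition of $N$.

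First I would fix notation for an appearance of a $k$-supertile. A point of $X_{\sub_N}$ is, by Proposition \ref{PROP:collared-substitution}, of the form $\iota_N(x)$ for some $x\in X_\sub$, and a level-$k$ decomposition presents it as $\sigma^i\sub_N^k(z)$ for a level-$k$ sequence $z=\iota_N(y)\in X_{\sub_N}$. If the supertile under consideration is $\sub_N^k(a_u)$ with $a_u = z_j$, then the collar $u = a_{-N}\cdots a_{-1}aa_1\cdots a_N$ records exactly the $N$ neighbours of $a=y_j$ in the level-$k$ sequence $y$. In particular $a_1,\ldots,a_N$ are the $N$ letters of $y$ following $a$, so the portion of $x$ immediately to the right of the occurrence of $\sub^k(a)$ is the start of $\sub^k(a_1)\sub^k(a_2)\cdots = \sub^k(a_1\cdots a_N\cdots)$; the situation on the left is symmetric.

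The heart of the argument is to show that the collared letter immediately to the right of the supertile is a function of $a_u$ alone. This collared letter is the collaring of the first letter $e_1$ of $\sub^k(a_1\cdots)$, and its collar is the $N$-neighbourhood of $e_1$ in $x$: its left half is the suffix of length $N$ of $\sub^k(a_{-N}\cdots a_{-1}a)$, and its right half consists of the letters in positions $2$ through $N+1$ of $\sub^k(a_1\cdots)$. Both halves are determined by the word $a_{-N}\cdots a_N$ recorded by $u$, together with $k$, provided $\sub^k$ has expanded the relevant blocks to length at least $N+1$. Here I use the choice of $N$: the legal words $a_1\cdots a_N$ and $a_{-N}\cdots a_{-1}$ have length $N$ and hence each contain an expanding letter, so $|\sub^k(a_1\cdots a_N)|$ and $|\sub^k(a_{-N}\cdots a_{-1})|$ tend to infinity with $k$. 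Since $\Al_N\subset\Al^{2N+1}$ is finite, there are only finitely many legal collared letters, so a single $k$ works for all of them: I would take $k$ large enough that $|\sub^k(v)|\geq N+1$ for every legal word $v$ of length $N$. For such $k$ the full collar of the right neighbour, and symmetrically of the left neighbour, is determined by $a_u$, so these neighbours are the same collared letters $a_r$ and $a_l$ for every appearance of $\sub_N^k(a_u)$. This is precisely the statement that $\sub_N$ forces the border at level $k$.

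The main obstacle is the collar bookkeeping: one must verify not merely that the central letter of the neighbour is forced, but that its entire $N$-collar is forced, which is exactly why $N$ is taken strictly larger than the longest bounded word and why the estimate $|\sub^k(v)|\geq N+1$ (rather than $\geq N$) is the one needed. A secondary point to handle carefully is that the collar of a level-$k$ letter genuinely records its level-$k$ neighbours; this is where one uses that the level-$k$ decomposition arises from a point $z\in X_{\sub_N}$, so that $z=\iota_N(y)$ and the collaring is consistent with the neighbourhood structure of $y$.
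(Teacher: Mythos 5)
Your proof is correct and takes essentially the same route as the paper's: both use tameness to guarantee that each half of the collar $u$, being a legal word of length $N$, contains an expanding letter, choose $k$ so that the substituted images extend more than $N$ letters beyond the edges of the supertile $\sub^k(a)$, and conclude that $a_u$ alone determines the adjacent $N$-collared letters on both sides. Your write-up is merely a little more explicit than the paper's in verifying that the \emph{entire} collar of each neighbour (not just its central letter) is forced, and in noting that the level-$k$ decomposition comes from a point of $X_{\sub_N}=\iota_N(X_\sub)$ so that collars record genuine neighbours.
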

	\begin{proof}
		Let $k$ be such that, for every expanding letter $a \in \Al$, we have $|\sub^k(a)| > N$. Let $a_u \in \Al_N$ be an $N$-collared letter such that $\sub^k_N(a_u)$ appears as a subword of $w \in X_{\sub_N}$. By our choice of $N$, there exists a letter $l = w_{i-j}$ to the left of $a$ in $u$ and a letter $r = w_{i+j'}$ to the right of $a$ in $u$ that are both expanding letters. Further, the indices $j$ and $j'$ can be chosen so that $j,j' < N$.
		
		So, $u = w_{i-N}\ldots l \ldots a \ldots r \ldots w_{i+N}$ where $w_i = a$ is the central letter of the word and then $$\sub^k(u) = \sub^k(w_{i-N})\ldots \sub^k(l) \ldots \sub^k(a) \ldots \sub^k(r) \ldots \sub^k(w_{i+N})$$
		
		As $|\sub^k(l)| > N$ and $|\sub^k(r)| > N$, and we know all tiles within $N$ places of $a$ in $u$, we can determine all $N$-collared tiles out until at least the rightmost $N$-collared letter of $\sub^k(l)$ to the left of any appearance of $\sub^k(a_u)$ in the decomposition of $w$ as $k$-supertiles, and at least the leftmost $N$-collared letter of $\sub^k(r)$ to the right of any appearance of $\sub^k(a_u)$ in the decomposition of $w$ as $k$-supertiles. These tiles lie outside of $\sub^k_N(a_u)$ and so $\sub^k_N(a_u)$ uniquely extends as a $k$-supertile in $X_{\sub_N}$. It follows that $\sub_N$ forces the border at level $k$.
	\end{proof}

		Let $\sub$ be a substitution on the alphabet $\Al$ and let $\TS$ be the associated tiling space. Use the convention that a point $T\in \TS$ is written coordinate-wise as $(w,t)$, $w \in X_\sub$ and $t \in [0,1)$. Recall \cite{AP} that we define the \emph{Anderson-Putnam complex} $\Gamma$ of $\sub$ to be $\TS/{\sim}$ where $\sim$ is the equivalence relation given by taking the transitive closure of the relation $(w,t)\sim (w',t')$ if $t = t' \in (0,1)$ and $w_0 = w'_0$ or $t = t' = 0$ and $w_{-1} = w'_{-1}$ or $w_0 = w'_0$.
		\begin{defn}
		We define the \emph{$n$-collared Anderson Putnam complex} $\Gamma_n$ to be the Anderson-Putnam complex associated to the $n$-collared substitution $\sub_n$.
		
		Let $p_n \colon \TS \to \Gamma_n$ be the natural quotient map. We define a map $f_n \colon \Gamma_n \to \Gamma_n$ to be the unique map which makes the following square  commute
		$$
			\begin{tikzpicture}[node distance=2.5cm, auto]
  				\node (00) {$\Gamma_n$};
  				\node (10) [node distance=2cm, above of=00] {$\TS$};
  				\node (01) [right of=00] {$\Gamma_n$};
  				\node (11) [right of=10] {$\TS$};

  				\draw[->] (10) to node [swap] {$p_n$} (00);
  				\draw[->] (11) to node [swap] {$p_n$} (01);
  				\draw[->] (01) to node [swap] {$f_n$} (00);
  				\draw[->] (11) to node [swap] {$\sub$} (10);
  				\draw (11) to node  {$\cong$} (10);
			\end{tikzpicture}
		$$
	\end{defn}
	For a tame substitution $\sub$, let $N_\sub = \max_{u\in B} |u|+1$ be one greater than the length of the longest legal bounded word for $\sub$. The following theorem allows us to replace the hypothesis of primitivity with tameness for the classic Anderson-Putnam Theorem \cite{AP} if we allow ourselves to collar letters out to a sufficient radius.
	\begin{thm}\label{THM:homeo}
	Let $\sub$ be a tame recognisable substitution. The natural map $h \colon \TS \to \varprojlim(\Gamma_{N_\sub}, f_{N_\sub})$ given by $$h(x) = (p_{N_\sub}(x), p_{N_\sub}(\sub^{-1}(x)), p_{N_\sub}(\sub^{-2}(x)), \ldots )$$ is a homeomorphism.
	\end{thm}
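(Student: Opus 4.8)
The plan is to follow the classical Anderson--Putnam strategy, adapted so that tameness plays the role that primitivity plays in the original argument. The map $h$ is assembled from the quotient maps $p_{N_\sub}\colon \TS_\sub \to \Gamma_{N_\sub}$, applied along the backward $\sub$-orbit of a point; the hypothesis that $\sub$ is recognisable guarantees that $\sub\colon \TS_\sub \to \TS_\sub$ is injective, so that $\sub^{-1}$ is a well-defined continuous map on its image and the coordinates $p_{N_\sub}(\sub^{-k}(x))$ make sense. First I would check that $h$ is well-defined, i.e.\ that the tuple it produces is a genuine element of the inverse limit: this amounts to verifying that $f_{N_\sub}\circ p_{N_\sub} = p_{N_\sub}\circ \sub$ on $\TS_\sub$, which is exactly the commuting square from the definition of $f_n$. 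Continuity of $h$ is then immediate, since each coordinate is a composite of continuous maps and the inverse limit carries the subspace topology from the product. Since $\TS_\sub$ is compact and $\varprojlim(\Gamma_{N_\sub},f_{N_\sub})$ is Hausdorff, it will suffice to prove that $h$ is a continuous bijection.

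The heart of the argument is injectivity, and this is where border forcing enters. By Lemma~\ref{LEM:border-forcing}, because $\sub$ is tame and $N_\sub$ is one greater than the longest bounded legal word, the collared substitution $\sub_{N_\sub}$ forces the border at some level $k$. The point of border forcing is that knowing the $k$-supertile containing a point, together with the position within it, already determines a large neighbourhood of the point in the tiling; equivalently, the partition of $\TS_\sub$ into $k$-supertiles is encoded by $p_{N_\sub}$ together with the higher backward coordinates. Concretely, I would argue that if $h(x) = h(y)$, then $x$ and $y$ agree on progressively larger windows: the $m$th coordinate $p_{N_\sub}(\sub^{-m}(x)) = p_{N_\sub}(\sub^{-m}(y))$ records the location of $\sub^{-m}(x)$ within a tile, and applying $\sub^m$ and invoking border forcing lets one recover the pattern of $x$ out to a radius that grows with $m$. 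Letting $m\to\infty$, the windows exhaust $\R$ (here one uses that $\TS_\sub$ contains a point with infinitely many expanding letters, via Lemma~\ref{LEM:minimal-seed}, so that $N_\sub$-collared $k$-supertiles genuinely grow), forcing $x = y$.

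For surjectivity, I would take an arbitrary element $(x_0,x_1,x_2,\ldots)$ of the inverse limit, with $f_{N_\sub}(x_{m+1}) = x_m$, and reconstruct a point of $\TS_\sub$ mapping to it. Each $x_m\in\Gamma_{N_\sub}$ specifies a tile (an $N_\sub$-collared letter) together with a position in $[0,1]$; applying the substitution $m$ times blows this up into an $m$-fold supertile, and the compatibility condition $f_{N_\sub}(x_{m+1}) = x_m$ guarantees that these supertiles are nested consistently, with the marked point always in the same place. Border forcing again ensures that the nested supertiles determine a coherent tiling of all of $\R$ in the limit: the forced borders mean each supertile sits unambiguously inside the next, so the union is a well-defined tiling $T\in\TS_\sub$, and by construction $p_{N_\sub}(\sub^{-m}(T)) = x_m$ for every $m$. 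A routine check confirms $h(T) = (x_0,x_1,\ldots)$.

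The main obstacle I anticipate is the injectivity step, and specifically making rigorous the claim that border forcing causes the determined windows to exhaust the whole line rather than stabilising at some bounded radius. In the primitive case this is automatic because every letter is expanding and supertiles grow uniformly; here, in the tame non-minimal setting, one must be careful that the backward orbit $\sub^{-m}(x)$ is defined for all $m$ (which recognisability supplies, but one should confirm $x$ lies in $\bigcap_m \sub^m(\TS_\sub)$, or handle the bounded-letter regions where supertiles may not grow) and that the growing windows are anchored on expanding letters so that their lengths tend to infinity. The presence of bounded letters means a supertile decomposition can contain arbitrarily long runs of bounded tiles that do not expand, so the argument must locate the marked point relative to nearby expanding letters---precisely the content guaranteed by Lemma~\ref{LEM:minimal-seed} and the choice of $N_\sub$---to ensure that the forced region around $x$ really does grow without bound.
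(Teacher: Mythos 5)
Your proposal follows essentially the same route as the paper's proof, which is in fact far terser: the paper simply notes that recognisability makes $\sub^{-1}$ (and hence $h$) well defined, that the choice of $N_\sub$ together with Lemma \ref{LEM:border-forcing} gives border forcing at some level $k$, so that a point of the inverse limit describes a unique tiling of the line (yielding bijectivity), and concludes via the compact-to-Hausdorff continuous bijection argument. Your elaborations of injectivity and surjectivity, including the concern about growth of the determined windows near bounded letters, are correct and are exactly what the cited border-forcing lemma (via the choice of $N_\sub$ exceeding the longest bounded legal word, rather than Lemma \ref{LEM:minimal-seed}) is designed to handle.
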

	\begin{proof}
	Recognisability of $\sub$ means that $\sub \colon \TS \to \TS$ has a well-defined inverse and so $h$ is well-defined. By the choice of $N_\sub$ and Lemma \ref{LEM:border-forcing}, the $N_\sub$-collared substitution $\sub_{N_\sub}$ forces the border at level $k$. Hence, a point in the inverse limit describes a unique tiling of the line, and so $h$ is both injective and surjective.	As $h$ is a continuous bijection from a compact space to a Hausdorff space, $h$ is a homeomorphism.
	\end{proof}
	If we are only concerned with aperiodic substitutions (which are tame by Corollary \ref{COR:aperiodic_implies_tame}), then we may further reduce the list of hypotheses for this theorem by making use of a result of Bezuglyi, Kwiatowski and Medynets \cite{BKM:recognisable}.
	\begin{thm}[Bezuglyi--Kwiatowski--Medynets]
	If $\sub$ is aperiodic, then $\sub$ is recognisable.
	\end{thm}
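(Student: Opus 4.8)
The plan is to prove this the way Moss\'e proves the primitive case (cf.\ \cite{M:aperiodic}), adapted to the non-primitive setting using tameness. First I would reformulate injectivity of $\sub\colon\TS\to\TS$ as a combinatorial \emph{unique desubstitution} property: $\sub$ is recognisable if and only if there is a \emph{recognisability radius} $L\in\N$ such that, for every $w\in X_\sub$, whether an index $i$ is a boundary between consecutive supertiles $\sub(a)$ in a desubstitution of $w$ is determined by the window $w_{i-L}\ldots w_{i+L}$. Equivalently, no $z\in\sub(X_\sub)\subset X_\sub$ admits two distinct supertile decompositions agreeing on arbitrarily long central windows. (If $X_\sub=\emptyset$ the claim is vacuous, so assume $X_\sub\neq\emptyset$.) Suppose this fails. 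Then for every $L$ there is an $L$-ambiguous configuration, and since $X_\sub$ is a compact subshift I can pass to a limit to obtain $x,x'\in X_\sub$ and an alignment under which $\sub(x)$ and $\sub(x')$ coincide as a single bi-infinite sequence $z$, while the induced cut-point sets $C,C'\subset\Z$ differ.

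Next I would use aperiodicity and tameness to locate the ambiguity on the expanding skeleton. By Corollary \ref{COR:aperiodic_implies_tame}, $\sub$ is tame, so the set $B$ of bounded legal words is finite; hence every maximal bounded block occurring in any $w\in X_\sub$ has length at most $\max_{u\in B}|u|$, and by Lemma \ref{LEM:minimal-seed} expanding letters occur with uniformly bounded gaps. Consequently any ambiguity confined to the bounded blocks is of uniformly bounded range and can be absorbed into the recognisability radius; the only way a \emph{uniform} radius $L$ can fail is through the expanding supertiles, whose lengths $|\sub^n(a)|$ for $a\in\Al_\infty$ grow without bound. I therefore pass to the coarser \emph{expanding} cut-point sets, keeping only boundaries at images of expanding letters, and conclude that $C$ and $C'$ already differ at the level of these expanding cuts. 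The mismatch is then located where an expanding supertile $\sub(a)$ of one decomposition overlaps shifted expanding supertiles of the other.

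The heart of the argument, and the step I expect to be the main obstacle, is to show that such a persistent mismatch forces $z$ to be periodic, contradicting aperiodicity. The local data at a mismatch --- the pair of expanding letters meeting at a cut together with the finite offset $c'-c$ between the two alignments --- takes only finitely many values, so along the (necessarily infinite) mismatch region some configuration recurs. Here I would run a pumping argument: inflating by $\sub$ lengthens the expanding supertiles while preserving the combinatorial type of the mismatch, which squeezes out any variation in the offset and propagates a single constant offset coherently along all of $\Z$; comparing the two decompositions of the same sequence $z$ under this constant shift then exhibits a nontrivial period of $z$. Since $z\in X_\sub$, this produces a $\sigma$-periodic point in $X_\sub$, contradicting the hypothesis that $\sub$ is aperiodic. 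The delicate point, exactly as in Moss\'e's original proof, is controlling how the offset interacts with the boundaries as one inflates --- showing the offset cannot wander and that its constancy is genuinely a period rather than a mere coincidence on a sparse set --- and it is here that the growth of expanding supertile lengths must be used decisively. This contradiction shows that a finite $L$ exists, so $\sub$ admits unique desubstitution with uniform radius, and therefore $\sub\colon\TS\to\TS$ is injective; that is, $\sub$ is recognisable. This recovers Proposition 5.5 of \cite{BKM:recognisable}.
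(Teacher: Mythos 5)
First, a point of comparison: the paper contains no proof of this statement at all --- it is quoted verbatim from \cite{BKM:recognisable} and used as a black box, so there is no internal argument to match yours against. Judged on its own merits, your proposal is a plan rather than a proof, and the plan has a genuine hole exactly where you flag it. The step ``a recurring mismatch configuration, pumped by $\sub$, squeezes out any variation in the offset and propagates a single constant offset coherently along all of $\Z$'' \emph{is} the theorem; it is the hard content of Moss\'e's argument even in the primitive case, and in the non-primitive setting the difficulties are worse, not absorbed by tameness. Distinct expanding letters can grow at wildly different rates (even polynomially of different degrees), so inflating a mismatch configuration does not obviously ``preserve its combinatorial type'': the offset between the two cut sets is stretched by different amounts on the two sides of a cut, and the bounded blocks separating expanding letters, though of uniformly bounded length by tameness, can absorb and re-emit offset as you inflate. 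You have no linear recurrence, no uniform repetitivity, and no Perron--Frobenius control to run the usual bounded-distortion bookkeeping, and you do not say what replaces them. One genuine simplification you could have exploited more explicitly --- and which is friendlier here than in Moss\'e's setting --- is that aperiodicity forbids \emph{every} periodic point of $X_\sub$, so it suffices to produce \emph{some} periodic legal sequence (as in Lemma \ref{LEM:leftmost-rightmost-periodic}) rather than to show $z$ itself is periodic; but the proposal does not carry out either version.

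There is a second, more localized gap in your reduction. From failure of a uniform radius $L$ you extract, by compactness, $x,x'\in X_\sub$ with $\sub(x)=\sub(x')=z$ and then assert ``the induced cut-point sets $C,C'$ differ.'' That does not follow: injectivity of $\sub\colon\TS\to\TS$ can fail with $C=C'$, namely when $\sub(a)=\sub(b)$ for distinct legal letters $a\neq b$ occurring in interchangeable legal contexts. Your cut-set and offset machinery says nothing about this case, and it must be ruled out using aperiodicity by a separate (if easier) argument; note the paper's own remark that for the periodic substitution $a\mapsto ab$, $b\mapsto b$ the induced map on $\TS_\sub$ is injective, so the interplay between letter-level ambiguity, cut ambiguity, and periodicity is delicate. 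Two minor corrections: the fact that maximal bounded blocks in every $w\in X_\sub$ have uniformly bounded length follows directly from tameness (finiteness of the set $B$ of bounded legal words, Definition \ref{DEF:wild}), not from Lemma \ref{LEM:minimal-seed}, which only asserts the \emph{existence} of one sequence with that property; and your opening equivalence between injectivity of $\sub$ on $\TS_\sub$ and a uniform recognisability radius, while standard, itself deserves a compactness proof in the stated generality. As it stands the proposal correctly identifies the architecture of a Moss\'e-style proof but leaves its load-bearing wall --- the offset-rigidity/pumping step, in a setting without primitivity --- as an acknowledged conjecture, so it does not establish the theorem of \cite{BKM:recognisable}.
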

	\begin{cor}
	Let $\sub$ be an aperiodic substitution. The map $h \colon \TS \to \varprojlim(\Gamma_{N_\sub}, f_{N_\sub})$ is a homeomorphism.
	\end{cor}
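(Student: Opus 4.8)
The plan is to observe that this corollary is a direct logical composition of results already assembled in the excerpt: the only task is to verify that an aperiodic substitution satisfies both hypotheses of Theorem \ref{THM:homeo}---tameness and recognisability---after which that theorem applies verbatim. There is no genuinely new content to establish here, so I would keep the argument to a few lines.

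First I would record that, since $\sub$ is aperiodic, Corollary \ref{COR:aperiodic_implies_tame} (originally Proposition 5.5 of \cite{BKM:recognisable}) applies and yields that $\sub$ is tame. This is not merely a formal step: tameness guarantees that the set $B$ of bounded legal words is finite, which is exactly what makes the quantity $N_\sub = \max_{u\in B}|u|+1$ appearing in the statement well-defined. In particular the collared alphabet $\Al_{N_\sub}$, the collared substitution $\sub_{N_\sub}$, and hence the complex $\Gamma_{N_\sub}$ and the map $f_{N_\sub}$ are all defined, so that the target $\varprojlim(\Gamma_{N_\sub}, f_{N_\sub})$ of the map $h$ even makes sense.

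Next I would invoke the Bezuglyi--Kwiatowski--Medynets theorem, stated immediately above this corollary, which asserts that every aperiodic substitution is recognisable; thus $\sub$ is recognisable as well. Having now verified that $\sub$ is both tame and recognisable---precisely the two hypotheses of Theorem \ref{THM:homeo}---I would simply apply that theorem to conclude that the natural map $h\colon \TS \to \varprojlim(\Gamma_{N_\sub}, f_{N_\sub})$ is a homeomorphism. I do not anticipate any obstacle whatsoever: the substance of the corollary lies entirely in the two implications (aperiodic $\Rightarrow$ tame and aperiodic $\Rightarrow$ recognisable), both of which are already in hand, so the result follows by pure composition and requires no further computation.
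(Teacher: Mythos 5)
Your proof is correct and follows exactly the route the paper intends: the corollary is stated immediately after the remark that aperiodic substitutions are tame (Corollary \ref{COR:aperiodic_implies_tame}) and the Bezuglyi--Kwiatowski--Medynets theorem giving recognisability, so the result is a direct application of Theorem \ref{THM:homeo}. Your additional observation that tameness is what makes $N_\sub$ (and hence $\Gamma_{N_\sub}$, $f_{N_\sub}$) well-defined is a worthwhile point that the paper leaves implicit.
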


	\begin{remark}
	There exist recognisable substitutions which are not aperiodic or even non-aperiodic. Take as an example the substitution $\sub \colon a \mapsto ab,\: b \mapsto b$ from Example \ref{EX:periodic}, whose induced substitution on the tiling space is just the identity map on a circle, and so is injective, hence $\sub$ is recognisable even though $\sub$ is a periodic substitution. This is perhaps surprising to a reader who is used to primitive substitutions, where recognisability, non-periodicity and aperiodicity are all equivalent.
	\end{remark}	
	\section{Closed Invariant Subspaces of Non-minimal Tiling Spaces}\label{SEC:CISs}
	\subsection{Invariant Subspaces}
	
	Let $\TS$ be a compact metric space and let $G$ act continuously on the right of $\TS$ via $\rho\colon \TS \times G\to \TS$ and let $\rho_{\tau}\colon \TS \to \TS$ be given by $x \mapsto \rho(x,\tau)$. We will normally only consider $G = \R$ or $G = \Z$, but the following machinery is applicable in the general case (in particular, tiling spaces in higher dimensions which have actions of higher dimensional Euclidean groups).
	
	If $\Lambda$ is a closed subspace of $\TS$ such that $\rho_\tau(\Lambda) = \Lambda$ for all $\tau \in G$, we call $\Lambda$ a \emph{closed invariant subspace} with respect to the action, or \emph{CIS} for short. The set of CISs $\mathcal{C}$ forms a lattice under inclusion of subspaces. The least elements of $\mathcal{C}$ that are not empty are the minimal sets of the action on $\TS$. The unique maximal element of $\mathcal{C}$ is the whole space $\TS$. Let us observe, without making further comment, the interesting fact that $\mathcal{C}^C = \{\TS \setminus \Lambda \mid \Lambda \in \mathcal{C}\}$ is a topology on the set $\TS$ (in general, coarser than the original topology induced by the metric on $\TS$). This topology is indiscrete if and only if $(\TS,\rho)$ is minimal. Any continuous map between dynamical systems which maps orbits to orbits will also induce a continuous map between the spaces endowed with the topology $\mathcal{C}^C$, and so the homeomorphism type of the topological space $(\Omega,\mathcal{C}^C)$ is an orbit equivalence invariant of the dynamical system $(\Omega,\rho)$.  
	
		\begin{lemma}\label{LEM:orbits}
			Let $f \colon \TS \to \TS'$ be a continuous map which maps $G$-orbits to $G$-orbits. If $\Lambda$ is a CIS of $\TS$ with respect to the action of $G$ on $\TS$, then $f(\Lambda)$ is a CIS of $\TS'$ with respect to the action of $G$ on $\TS'$.
			
		\end{lemma}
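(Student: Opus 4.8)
The plan is to verify separately that $f(\Lambda)$ is closed and that it is $G$-invariant; both are soft consequences of the hypotheses once the setup is pinned down, and they combine at once to give the conclusion. Throughout I take $\TS'$ to be a (compact) metric space carrying a continuous right $G$-action, as in the ambient setup, and I write $\rho'_\tau$ for the action on $\TS'$ and $x\cdot G$ for the $G$-orbit of a point.

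First I would dispatch closedness using compactness alone. Since $\TS$ is compact and $\Lambda$ is a closed subspace, $\Lambda$ is itself compact. The map $f$ is continuous, so $f(\Lambda)$ is a compact subset of $\TS'$; and since $\TS'$ is metric, hence Hausdorff, every compact subset is closed. Therefore $f(\Lambda)$ is closed. Note that this half of the argument uses only compactness of $\TS$ and continuity of $f$, and not the orbit structure at all—which is precisely why working with compact ambient spaces is essential.

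Next I would prove invariance, which is where the orbit hypothesis enters. Fix $\tau\in G$ and a point $y\in f(\Lambda)$, say $y=f(x)$ with $x\in\Lambda$. Because $\Lambda$ is a CIS it is invariant, so the whole orbit $x\cdot G$ lies in $\Lambda$. By hypothesis $f$ sends this orbit onto a $G$-orbit of $\TS'$, and since $y=f(x)$ belongs to $f(x\cdot G)$, that image orbit must be exactly $y\cdot G$. Hence $y\cdot G = f(x\cdot G)\subseteq f(\Lambda)$, so in particular $\rho'_\tau(y)\in f(\Lambda)$. This shows $\rho'_\tau(f(\Lambda))\subseteq f(\Lambda)$; and since every $y\in f(\Lambda)$ satisfies $y=\rho'_\tau(\rho'_{\tau^{-1}}(y))$ with $\rho'_{\tau^{-1}}(y)\in y\cdot G\subseteq f(\Lambda)$, we also get $f(\Lambda)\subseteq \rho'_\tau(f(\Lambda))$. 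Thus $\rho'_\tau(f(\Lambda))=f(\Lambda)$ for all $\tau\in G$.

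The main obstacle—really the only delicate point—is the precise reading of the phrase ``maps $G$-orbits to $G$-orbits.'' The invariance argument needs $f(x\cdot G)$ to be a \emph{full} orbit of $\TS'$, not merely a subset of one; only then does the entire orbit $y\cdot G$ sit inside $f(\Lambda)$, yielding genuine invariance rather than mere forward-invariance under each $\rho'_\tau$. I would therefore state this interpretation explicitly at the outset. With that understood, the two halves combine immediately: $f(\Lambda)$ is closed and $G$-invariant, i.e.\ a CIS of $\TS'$.
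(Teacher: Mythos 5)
Your proposal is correct and follows essentially the same route as the paper's proof: closedness via compactness of $\Lambda$, continuity of $f$, and the Hausdorff property of $\TS'$, and invariance via the observation that $f$ carries the full orbit $\mathcal{O}_x \subseteq \Lambda$ onto the full orbit $\mathcal{O}_y$, forcing $\mathcal{O}_y \subseteq f(\Lambda)$. Your explicit verification of both inclusions $\rho'_\tau(f(\Lambda)) \subseteq f(\Lambda)$ and $f(\Lambda) \subseteq \rho'_\tau(f(\Lambda))$, and your flagging that ``maps orbits to orbits'' must mean onto full orbits, merely make precise steps the paper leaves implicit.
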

		\begin{proof}
			Let $\Lambda$ be a CIS of $\TS$. As $\TS$ is compact and $\Lambda$ is a closed subspace, $\Lambda$ is compact, so the image of $\Lambda$ under a continuous map is compact. As $\TS'$ is Hausdorff, a compact subspace of $\TS'$ must be closed, and so $f(\Lambda)$ is a closed subspace of $\TS'$.
			
			Let $\mathcal{O}_x = \{ \rho_\tau(x) \mid \tau \in G\}$ be the orbit of a point $x \in \TS$ under the $G$-action. From the definition of a CIS, if $x \in \Lambda$ then $\rho_\tau(x) \in \Lambda$ for all $\tau$ and so $\Lambda$ contains $\mathcal{O}_x$ for all points $x\in \Lambda$. If $y\in f(\Lambda)$, then there exists an $x \in \Lambda$ such that $f(x) = y$. The image of an orbit under $f$ is also an orbit, and so as $f(\mathcal{O}_x) \subset f(\Lambda)$, and as $y$ is a point on that orbit, we find that $f(\mathcal{O}_x) = \mathcal{O}_y$ and $\mathcal{O}_y \subset f(\Lambda)$. Hence $f(\Lambda)$ is invariant under the action of $G$, and so forms a CIS.
		\end{proof}
		Let $\TS$ be a compact metric space on which the group $G$ acts on the right and let $\mathcal{C}$ be the set of CISs for $\TS$.
		\begin{defn}
			The \emph{inclusion diagram} $D_{\TS}$ for $\TS$ is a diagram whose objects are the elements of $\mathcal{C}$ and whose arrows $i_{jk} \colon \Lambda_j \to \Lambda_k$ are given by inclusion for every pair $j, k$ such that $\Lambda_j \subset \Lambda_k$.
			
			The \emph{inclusion cohomology diagram} of $\TS$, denoted by $\check{H}^*(D_\TS)$, is given by the diagram of groups induced by applying the \Cech cohomology functor to $D_{\TS}$.
		\end{defn}
		\begin{defn}
			The \emph{quotient diagram} $D^{\TS}$ for $\TS$ is a diagram with objects $\TS/\Lambda$ for every $\Lambda \in \mathcal{C}$ and an arrow $q_{jk} \colon \TS/\Lambda_j \to \TS/\Lambda_k$ given by the quotient map for every pair $j,k$ such that $\Lambda_j \subset \Lambda_k$.
			
			The \emph{quotient cohomology diagram} of $\TS$, denoted by $\check{H}^*(D^\TS)$, is given by the diagram of groups induced by applying the \Cech cohomology functor to $D^{\TS}$.
		\end{defn}
		These diagrams clearly have the same shape given by the correspondence between $\Lambda$ and $\TS/\Lambda$ for every CIS $\Lambda$.
		\begin{example}
		Consider the shift-orbit $\mathcal{O}(w)$ of the sequence $w = \ldots aaaa.bcbcbc\ldots$. The closure of $\mathcal{O}(w)$ in $\{a,b,c\}^\Z$ also contains three additional sequences in the limit and so the subshift $X_w$ generated by $w$ is $$X_w = \mathcal{O}(w) \sqcup \Lambda_1 \sqcup \Lambda_2$$ where $\Lambda_1 = \{\ldots aaa.aaa\ldots\} $ and $\Lambda_2 = \{\ldots bcbc.bcbc\ldots,\ldots cbcb.cbcb \ldots\}$.
		The dynamical system has two minimal sets $\Lambda_1$ and $\Lambda_2$ and one other proper non-empty closed invariant subspace given by their disjoint union $\Lambda_3 = \Lambda_1 \sqcup \Lambda_2$. The inclusion and quotient diagrams $D_{X_w}$ and $D^{X_w}$ for this dynamical system are given below, with the composition of arrows being implicit.
				$$\begin{tikzpicture}[node distance=1cm, auto]
  		\node (1) {};
  		\node (Y) [below of=1] {};
  		\node (X) [right of=1] {$X_w$};
  		\node (2) [below of=X] {$\Lambda_3$};
  		\node (3) [right of=2] {};
  		\node (4) [below of=Y] {$\Lambda_1$};
  		\node (5) [right of=4] {};
  		\node (6) [right of=5] {$\Lambda_2$};
  		\node (7) [below of=4] {};
  		\node (8) [right of=7] {$\emptyset$};
  		\node (9) [right of=8]{};
  		\draw[right hook->] (4) to node [swap] {} (2);
  		\draw[right hook->] (6) to node [swap] {} (2);
  		\draw[right hook->] (8) to node [swap] {} (4);
  		\draw[right hook->] (8) to node [swap] {} (6);
  		\draw[right hook->] (2) to node [swap] {} (X);
  		
  		\node (A) [left of=4, node distance=1.5cm] {$D_{X_w}\colon$};
  		
  		\node (1b) [right of=1, node distance=6cm]{};
  		\node (Yb) [below of=1b] {};
  		\node (Xb) [right of=1b] {$\{\ast\}$};
  		\node (2b) [below of=Xb] {$\Z^*$};
  		\node (3b) [right of=2b] {};
  		\node (4b) [below of=Yb] {$X_w$};
  		\node (5b) [right of=4b] {};
  		\node (6b) [right of=5b] {$\Z^{**}$};
  		\node (7b) [below of=4b] {};
  		\node (8b) [right of=7b] {$X_w$};
  		\node (9b) [right of=8b]{};
  		\draw[->>] (4b) to node [swap] {} (2b);
  		\draw[->>] (6b) to node [swap] {} (2b);
  		\draw[->>] (8b) to node [swap] {} (4b);
  		\draw[->>] (8b) to node [swap] {} (6b); 
  		\draw[->>] (2b) to node [swap] {} (Xb);
  		
  		\node (Ab) [left of=4b, node distance=1.5cm] {$D^{X_w}\colon$};
		\end{tikzpicture}$$
		where $\Z^*$ is the one-point compactification of the integers whose point at $\infty$ is the equivalence class $[\Lambda_3]$ shrunk to a point and where $\Z^{**}$ is the two-point compactification of the integers $\Z\cup\{-\infty,\infty\}$ whose point at $-\infty$ is the point $\{\ldots aaa.aaa \ldots\}$ and whose point at $\infty$ is the equivalence class $[\Lambda_2]$ shrunk to a point. Note that one of the copies of $X_w$ in the right hand diagram is really $X_w$ but with the single point $\Lambda_1$ replaced (or rather renamed) with the equivalence class $[\Lambda_1]$.
		\end{example}
		\begin{example}
		The inclusion and quotient diagrams for a compact metric space need not be finite. Take the toral product system $\TS = S^1\times S^1$ with rotation action $\rho \colon \TS \times\R \to \TS$ given by $\rho((\theta_1,\theta_2),t) = (\theta_1, \theta_2+t)\mod 1$. For every $\theta\in S^1$, the meridional circle $\{\theta\}\times S^1$ is closed and invariant (and in fact minimal). Moreover, every CIS of $(\Omega,\rho)$ is of the form $C\times S^1$ for some closed set $C$ and so the inclusion and quotient diagrams are isomorphic (as lattices) to the lattice of closed subsets of the circle $S^1$.
		
		More generally, if every $\rho$-orbit of the compact dynamical system $(\TS,\rho)$ is closed, then the lattice of CISs $\mathcal{C}$ is naturally isomorphic to the lattice of closed sets for the orbit space $\TS/\rho$ under the map taking every CIS to the equivalence class represented by that closed union of orbits.
		\end{example}
		
		\begin{remark}
			Note that all of the arrows appearing in $D_\TS$ and $D^\TS$ commute with the $G$-action induced on the objects $\Lambda$ and $\TS/\Lambda$ for all $\Lambda \in \mathcal{C}$. (The action is well defined on quotients because either an orbit is mapped injectively onto a subspace of $\TS/\Lambda$ or it is mapped to the point $[\Lambda] \in \TS/\Lambda$.) So the inclusion and quotient diagrams both admit commuting $G$-actions.
		\end{remark}
		
		If $D$ and $E$ are diagrams of groups, we say a collection of homomorphisms $f = \{f_i \colon G \to H \mid G\in D, H \in E\}$ is a \emph{map of diagrams} if the diagram $D \sqcup_f E$ commutes, where the objects of $D \sqcup_f E$ are given by the disjoint union of the objects in $D$ and $E$ and the homomorphisms of $D \sqcup_f E$ are given by the union of the homomorphisms in $D$ and $E$ together with the homomorphisms in $f$.
		
		\begin{defn}
			Let $f \colon \TS \to \TS'$ be an orbit-preserving map. We define the \emph{induced map on inclusion cohomology diagrams} $f^* \colon \check{H}^*(D_{\TS'}) \to \check{H}^*(D_\TS)$ by $$f^* = \{f|_\Lambda^*\colon \check{H}^*(\Lambda') \to \check{H}^*(\Lambda) \mid f(\Lambda) = \Lambda'\}.$$
		\end{defn}
		Lemma \ref{LEM:orbits} tells us that this induced map of diagrams of groups $f^*$ is non-empty (for all non-empty $\TS'$).
		
		\begin{lemma}\label{LEM:induced}
			The induced map on inclusion cohomology diagrams $f^*$ is a map of diagrams of groups.
		\end{lemma}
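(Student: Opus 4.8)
The plan is to reduce the lemma to the functoriality of \Cech cohomology applied to a family of commuting squares of spaces, the only genuine input being Lemma \ref{LEM:orbits}. First I would unwind the definitions: by construction $f^*$ is the collection of homomorphisms $f|_\Lambda^* \colon \check{H}^*(f(\Lambda)) \to \check{H}^*(\Lambda)$, one for each CIS $\Lambda$ of $\TS$, where $f(\Lambda)$ is a CIS of $\TS'$ by Lemma \ref{LEM:orbits}, so that $f|_\Lambda \colon \Lambda \to f(\Lambda)$ is a well-defined continuous map inducing $f|_\Lambda^*$. To prove that $f^*$ is a map of diagrams I must show that the combined diagram $\check{H}^*(D_{\TS'}) \sqcup_{f^*} \check{H}^*(D_\TS)$ commutes. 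The two constituent diagrams already commute, being the images under the contravariant \Cech functor of the inclusion diagrams $D_{\TS'}$ and $D_\TS$, in which any composite of inclusion arrows of CISs is again the inclusion arrow. It therefore suffices to check commutativity of each square joining the two diagrams along arrows of $f^*$.

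Next I would pin down these squares. Fix CISs $\Lambda_1 \subset \Lambda_2$ of $\TS$, with inclusion arrow $i \colon \Lambda_1 \hookrightarrow \Lambda_2$ in $D_\TS$. Since $f$ is a function we have $f(\Lambda_1) \subset f(\Lambda_2)$, and by Lemma \ref{LEM:orbits} both are CISs of $\TS'$, giving an inclusion arrow $j \colon f(\Lambda_1) \hookrightarrow f(\Lambda_2)$ in $D_{\TS'}$. These assemble into a square of spaces that commutes on the nose,
\[
j \circ f|_{\Lambda_1} = f|_{\Lambda_2} \circ i,
\]
since both composites send a point $x \in \Lambda_1$ to $f(x) \in f(\Lambda_2)$.

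Finally I would apply the functor. Because \Cech cohomology is contravariant and satisfies $(g \circ h)^* = h^* \circ g^*$, the displayed identity yields
\[
f|_{\Lambda_1}^* \circ j^* = i^* \circ f|_{\Lambda_2}^*,
\]
which is exactly the commutativity of the relevant square of the combined diagram: its horizontal edges are the arrows $f|_{\Lambda_2}^*$ and $f|_{\Lambda_1}^*$ of $f^*$, while its vertical edges $j^*$ and $i^*$ are the corresponding arrows of $\check{H}^*(D_{\TS'})$ and $\check{H}^*(D_\TS)$. Since every arrow of the combined diagram factors through inclusion-induced arrows and arrows of $f^*$, commutativity of all such squares, together with that of the two constituent diagrams, forces the whole diagram to commute, so $f^*$ is a map of diagrams.

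I do not expect a real obstacle here: the argument is purely formal. The only point requiring care---and the sole place where the orbit-preserving hypothesis on $f$ is used---is the appeal to Lemma \ref{LEM:orbits}, which guarantees that $f$ carries CISs to CISs and preserves their inclusions, so that the squares above are honestly indexed by the objects and arrows of the two diagrams; everything else is the functoriality bookkeeping.
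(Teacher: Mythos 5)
Your proof is correct and follows essentially the same route as the paper: fix CISs $\Lambda_1 \subset \Lambda_2$, observe that the square $j \circ f|_{\Lambda_1} = f|_{\Lambda_2} \circ i$ commutes on the nose by evaluating at points, and apply the contravariant \Cech functor to obtain $i^* \circ f|_{\Lambda_2}^* = f|_{\Lambda_1}^* \circ j^*$. Your additional remarks---that Lemma \ref{LEM:orbits} is the sole use of the orbit-preserving hypothesis, and that commutativity of the constituent diagrams plus these squares forces commutativity of the combined diagram---are accurate elaborations of the same argument, not a different method.
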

		\begin{proof}
			Suppose $f|_\Lambda^*, f|_{\Lambda'}^* \in f^*$ and suppose without loss of generality that $\Lambda \subset \Lambda'$ with inclusion map $i \colon \Lambda \to \Lambda'$. Then we must have $f(\Lambda)\subset f(\Lambda')$ and an inclusion map $j\colon f(\Lambda) \to f(\Lambda')$. If $x \in \Lambda$ then $f|_{\Lambda'} (i (x)) = f|_{\Lambda'}(x) = f(x)$ and $j ( f|_\Lambda (x)) = j(f(x)) = f(x)$. So,
			$$f|_{\Lambda'} \circ i = j \circ f|_{\Lambda}$$ and then by applying the \Cech cohomology functor we get $$i^* \circ f|_{\Lambda'}^* = f|_\Lambda^* \circ j^*$$ as required.
		\end{proof}
		
		For a CIS $\Lambda$ of $\TS$, let $q_\Lambda \colon \TS \to \TS/\Lambda$ be the corresponding quotient map. For an orbit-preserving map $g \colon \TS \to \TS'$, if $g(\Lambda) = \Lambda'$, then there is a unique map $g_\Lambda \colon \TS/\Lambda \to \TS'/\Lambda'$ such that $$g_\Lambda \circ q_\Lambda = q_{\Lambda'} \circ g$$
		\begin{defn}
			Let $g \colon \TS \to \TS'$ be an orbit-preserving map. We define the \emph{induced map on quotient cohomology diagrams} $g^* \colon \check{H}^*(D^{\TS'}) \to \check{H}^*(D^\TS)$ by $$g^* = \{g_\Lambda^* \colon \check{H}^*(\TS'/\Lambda') \to \check{H}^*(\TS/\Lambda) \mid g(\Lambda) = \Lambda'\}$$
		\end{defn}
		Lemma \ref{LEM:orbits} tells us that this induced map $g^*$ is non-empty.
		
		\begin{lemma}
			The induced map on quotient cohomology diagrams $g^*$ is a map of diagrams of groups.
		\end{lemma}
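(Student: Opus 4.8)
The plan is to follow the same strategy as the proof of Lemma \ref{LEM:induced}: first establish the relevant commuting square at the level of spaces, and then apply the (contravariant) \Cech cohomology functor. This statement is simply the quotient-side mirror of Lemma \ref{LEM:induced}, so the structure of the argument should be parallel.

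First I would fix two CISs $\Lambda_1\subset \Lambda_2$ of $\TS$, together with the quotient-diagram arrow $q_{12}\colon \TS/\Lambda_1\to \TS/\Lambda_2$. Writing $\Lambda_1' = g(\Lambda_1)$ and $\Lambda_2' = g(\Lambda_2)$, Lemma \ref{LEM:orbits} guarantees these are CISs of $\TS'$, and $\Lambda_1\subset\Lambda_2$ forces $\Lambda_1'\subset\Lambda_2'$, so there is a corresponding arrow $q_{12}'\colon \TS'/\Lambda_1'\to \TS'/\Lambda_2'$ in $D^{\TS'}$. The goal at the space level is the commutativity of the square
\begin{equation*}
g_{\Lambda_2}\circ q_{12} = q_{12}'\circ g_{\Lambda_1}.
\end{equation*}

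The key step is to verify this using the universal property of the quotient maps $q_{\Lambda_i}\colon \TS\to \TS/\Lambda_i$, which are surjective. The defining relations of the induced maps give $g_{\Lambda_i}\circ q_{\Lambda_i} = q_{\Lambda_i'}\circ g$ for $i = 1,2$, while the diagram arrows satisfy $q_{12}\circ q_{\Lambda_1} = q_{\Lambda_2}$ and $q_{12}'\circ q_{\Lambda_1'} = q_{\Lambda_2'}$. Precomposing each side of the target identity with $q_{\Lambda_1}$ and substituting, both sides reduce to $q_{\Lambda_2'}\circ g$; since $q_{\Lambda_1}$ is surjective this forces $g_{\Lambda_2}\circ q_{12} = q_{12}'\circ g_{\Lambda_1}$. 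Applying \Cech cohomology to this square then yields $q_{12}^*\circ g_{\Lambda_2}^* = g_{\Lambda_1}^*\circ (q_{12}')^*$, which is exactly the commutativity of $D^{\TS'}\sqcup_{g^*} D^{\TS}$ needed for $g^*$ to be a map of diagrams of groups; together with Lemma \ref{LEM:orbits}, which ensures $g^*$ is non-empty, this completes the argument.

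There is no serious obstacle here; the only points requiring care are bookkeeping ones. First, one must confirm that each auxiliary map $g_\Lambda$ is genuinely well-defined, which relies on $g(\Lambda)=\Lambda'$ so that $q_{\Lambda'}\circ g$ collapses all of $\Lambda$ to the basepoint $[\Lambda']$ and therefore descends through $q_\Lambda$. Second, one must track the variance correctly: the quotient-diagram arrows run from the smaller CIS to the larger one (opposite to the inclusion-diagram convention), while the induced maps run from the $\TS'$-side to the $\TS$-side, and the cancellation in the key step depends on surjectivity of $q_{\Lambda_1}$ rather than on any injectivity.
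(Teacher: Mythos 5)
Your proof is correct and takes exactly the route the paper intends: the paper's own ``proof'' is just the remark that the argument is very similar to that of Lemma \ref{LEM:induced}, and your write-up is precisely that parallel argument, establishing the commuting square at the space level (via surjectivity of $q_{\Lambda_1}$, the natural substitute for the pointwise check in the inclusion case) and then applying the \Cech cohomology functor. Your two bookkeeping remarks---well-definedness of $g_\Lambda$ and the reversed direction of the quotient-diagram arrows---are exactly the points where care is needed, and you handle them correctly.
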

		The proof is very similar to the proof of Lemma \ref{LEM:induced}
		
		\begin{thm}
			Inclusion and quotient cohomology diagrams, together with their induced maps, are contravariant functors from the category of $G$-actions on compact metric spaces and orbit-preserving maps to the category of diagrams of abelian groups and homomorphisms.
		\end{thm}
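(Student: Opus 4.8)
The plan is to verify the two functor axioms---preservation of identities and contravariant preservation of composition---for each of the two assignments, since the rest of the data has already been supplied. On objects the assignment $\TS \mapsto \check{H}^*(D_\TS)$ (respectively $\TS \mapsto \check{H}^*(D^\TS)$) lands in the target category by construction: $D_\TS$ is a diagram of compact spaces and inclusions, so applying the contravariant \Cech cohomology functor yields a diagram of abelian groups. On morphisms, Lemma \ref{LEM:orbits} guarantees that an orbit-preserving map $f \colon \TS \to \TS'$ carries each CIS $\Lambda$ to a CIS $f(\Lambda)$ of $\TS'$, so the restriction $f|_\Lambda \colon \Lambda \to f(\Lambda)$, and via the universal property of the quotient the map $f_\Lambda \colon \TS/\Lambda \to \TS'/f(\Lambda)$, are both defined; Lemma \ref{LEM:induced} and its quotient analogue then show that $f^*$ and $g^*$ are genuine maps of diagrams of groups. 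I would note at the outset that these induced maps are \emph{functional}, in the sense that each object of the target diagram receives exactly one homomorphism, from a determined source object, which is precisely what makes their composition in the target category unambiguous.

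For the identity axiom I would observe that $\mathrm{id}_\TS$ fixes every CIS $\Lambda$ and restricts to $\mathrm{id}_\Lambda$, so functoriality of $\check{H}^*$ gives $(\mathrm{id}_\TS|_\Lambda)^* = \mathrm{id}_{\check{H}^*(\Lambda)}$, whence $(\mathrm{id}_\TS)^*$ is the identity map of diagrams. For the quotient version, the uniqueness clause in the definition of $f_\Lambda$ forces $(\mathrm{id}_\TS)_\Lambda = \mathrm{id}_{\TS/\Lambda}$, since $\mathrm{id}_{\TS/\Lambda} \circ q_\Lambda = q_\Lambda \circ \mathrm{id}_\TS$, and again $\check{H}^*$ sends this to the identity.

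For composition, let $f \colon \TS \to \TS'$ and $g \colon \TS' \to \TS''$ be orbit-preserving, fix a CIS $\Lambda$ of $\TS$, and write $\Lambda' = f(\Lambda)$ and $\Lambda'' = g(\Lambda') = (g\circ f)(\Lambda)$. In the inclusion case the crucial identity is $(g\circ f)|_\Lambda = g|_{\Lambda'} \circ f|_\Lambda$, which is immediate from the definitions; contravariance of $\check{H}^*$ then gives $((g\circ f)|_\Lambda)^* = (f|_\Lambda)^* \circ (g|_{\Lambda'})^*$. Reading off the components of $f^* \circ g^*$, the component at $\Lambda$ is exactly $(f|_\Lambda)^* \circ (g|_{\Lambda'})^* \colon \check{H}^*(\Lambda'') \to \check{H}^*(\Lambda)$, so $(g\circ f)^* = f^* \circ g^*$.

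For the quotient diagrams the analogous step is $(g\circ f)_\Lambda = g_{\Lambda'} \circ f_\Lambda$, and here the main point requiring care is that I would \emph{not} argue pointwise but instead invoke the universal property: both sides are maps $\TS/\Lambda \to \TS''/\Lambda''$, and since $g_{\Lambda'} \circ f_\Lambda \circ q_\Lambda = g_{\Lambda'} \circ q_{\Lambda'} \circ f = q_{\Lambda''} \circ g \circ f = q_{\Lambda''} \circ (g\circ f)$, the composite $g_{\Lambda'} \circ f_\Lambda$ satisfies the defining relation of $(g\circ f)_\Lambda$, so the two coincide by uniqueness. Applying $\check{H}^*$ once more reverses the order to give $(g\circ f)^* = f^* \circ g^*$ for the quotient functor as well. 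Every step beyond this compatibility of induced quotient maps reduces to the functoriality and contravariance of \Cech cohomology applied componentwise, so that verification via the uniqueness property is the only place where genuine attention is needed.
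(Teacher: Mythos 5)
Your proof is correct and takes essentially the same route as the paper: the composition law for the inclusion diagrams is verified componentwise from the identity $(g\circ f)|_\Lambda = g|_{f(\Lambda)}\circ f|_\Lambda$ together with contravariant functoriality of \Cech cohomology, which is precisely the paper's argument. You are in fact somewhat more complete than the paper, which omits the identity axiom entirely and dismisses the quotient case with ``a similar argument''---your universal-property verification that $(g\circ f)_\Lambda = g_{\Lambda'}\circ f_\Lambda$ (checking $g_{\Lambda'}\circ f_\Lambda\circ q_\Lambda = q_{\Lambda''}\circ(g\circ f)$ and invoking uniqueness) is exactly the step the paper leaves implicit, and your observation that each object of the target diagram receives exactly one component map is the right justification that composition of diagram maps is unambiguous.
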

		
		\begin{proof}
			Let $\TS \stackrel{f}{\to} \TS' \stackrel{g} \to \TS''$ be a pair of orbit preserving maps. Let $\Lambda$ be a CIS of $\TS$. The map of diagrams of groups $f^* \circ g^*$ is the set of all compositions $f|_{\Lambda}^* \circ g|_{f(\Lambda)}^*$ which by functoriality of cohomology is equal to $(g \circ f)|_{\Lambda}^*$. The map of diagrams of groups $(g \circ f)^*$ is the set of all maps $(g \circ f)|_{\Lambda}^*$ for CISs $\Lambda$ of $\TS$ and so $f^* \circ g^* = (g \circ f)^*$ as required.
			
			A similar argument shows the functoriality of the quotient cohomology diagram.
		\end{proof}
		
		\begin{cor}
			Both $\check{H}^*(D_{\TS})$ and $\check{H}^*(D^{\TS})$ are at least as strong an invariant of tiling spaces (up to orbit-equivalence) as \Cech cohomology.
		\end{cor}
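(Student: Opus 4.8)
The plan is to exhibit the ordinary \Cech cohomology $\check{H}^*(\TS)$ as the group sitting at a single, canonically distinguished vertex of each cohomology diagram, and then to let the functoriality established in the preceding theorem do the rest. The crucial observation is that the whole space $\TS$ is itself a CIS --- indeed the unique maximal element of the lattice $\mathcal{C}$ --- and that $\emptyset$ is its unique minimal element. Hence $\check{H}^*(\TS)$ appears as the group attached to the top vertex $\Lambda=\TS$ of the inclusion diagram $\check{H}^*(D_\TS)$; and since $\TS/\emptyset=\TS$, the same group $\check{H}^*(\TS)$ appears at the bottom vertex $\Lambda=\emptyset$ of the quotient diagram $\check{H}^*(D^\TS)$.

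First I would observe that these vertices are intrinsic to the shape of the diagram: in $\mathcal{C}$ the element $\TS$ is the unique CIS into which every other CIS includes, and $\emptyset$ is the unique CIS that includes into every other one. Given an orbit-equivalence $\phi\colon\TS\to\TS'$, Lemma~\ref{LEM:orbits} applied to both $\phi$ and $\phi^{-1}$ shows that $\Lambda\mapsto\phi(\Lambda)$ is an inclusion-preserving bijection of the two CIS lattices, so it carries maxima to maxima and minima to minima; in particular $\phi(\TS)=\TS'$ and $\phi(\emptyset)=\emptyset$. Consequently the diagram isomorphism induced by the functoriality theorem matches the distinguished vertex of $\TS$ with that of $\TS'$.

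Next I would read off the component of the induced map at this vertex. By the definition of the induced map on inclusion cohomology diagrams, its component at $\Lambda=\TS$ is $\phi|_\TS^*=\phi^*\colon\check{H}^*(\TS')\to\check{H}^*(\TS)$, which is an isomorphism because $\phi$ is a homeomorphism; the quotient case is identical, using the vertex $\Lambda=\emptyset$ together with $\TS/\emptyset=\TS$ and $\phi_\emptyset=\phi$. This amounts to a natural projection of each diagram-valued invariant onto $\check{H}^*(-)$, so that $\check{H}^*(\TS)$ is recoverable from $\check{H}^*(D_\TS)$ and from $\check{H}^*(D^\TS)$. Therefore, whenever two tiling spaces have isomorphic diagram invariants their ordinary \Cech cohomologies must agree, which is precisely the assertion that the diagram invariants are at least as strong.

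The single point needing care --- and the nearest thing to an obstacle --- is to confirm that an abstract isomorphism of the diagrams genuinely respects the marked vertex, rather than merely permuting groups within a common shape. This is settled by noting that the marking (top for the inclusion diagram, bottom for the quotient diagram) is determined by the lattice order alone and is preserved by any orbit-equivalence via Lemma~\ref{LEM:orbits}; everything else is formal functoriality.
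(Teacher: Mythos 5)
Your proposal is correct and takes essentially the same approach as the paper, which states this corollary without proof as an immediate consequence of the preceding functoriality theorem: the ordinary group $\check{H}^*(\TS)$ is read off at the vertex $\Lambda=\TS$ of the inclusion diagram (respectively at $\Lambda=\emptyset$, with the paper's convention $\TS/\emptyset=\TS$, of the quotient diagram), and an orbit equivalence induces an isomorphism there. Your additional check that this vertex is intrinsic to the diagram's shape---being the unique extremum of the lattice $\mathcal{C}$, preserved by orbit equivalences via Lemma \ref{LEM:orbits}---is precisely the detail the paper leaves implicit.
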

		
		We will see in the next section that examples exist where $\check{H}^*(D_{\TS})$ and $\check{H}^*(D^{\TS})$ can distinguish pairs of spaces whose cohomology coincides. So they are in fact strictly stronger invariants than \Cech cohomology on its own.
	
	\subsection{Invariant Subspaces of Substitution Tiling Spaces}
		Let $\sub$ be a tame recognisable substitution, let $\TS$ be the associated tiling space and let $\rho \colon \TS \times \R \to \TS$ be the associated flow on $\TS$ given by $$\rho((w,t),\tau) = (\sigma^{\lfloor t+\tau \rfloor} (w),t+\tau \mod 1).$$ Note that orbits in this setting are precisely the path components of the tiling space. So, even though the previous machinery has been defined for dynamical systems, for tiling spaces the dynamical and topological setting coincide. We could have just as easily considered the set of closed unions of path components, rather than closed invariant subspaces.

		\begin{lemma}\label{LEM:finite-CISs}
			Let $\mathcal{C}$ be the set of CISs for a tame recognisable substitution $\sub$ on the alphabet $\Al$. The set $\mathcal{C}$ is finite.
		\end{lemma}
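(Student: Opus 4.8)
The plan is to use the inverse limit presentation of $\TS$ from Theorem \ref{THM:homeo} and to show that a CIS is completely encoded by, and recoverable from, the sequence of subcomplexes it projects to at each level of the inverse limit. Write $\Gamma = \Gamma_{N_\sub}$ and $f = f_{N_\sub}$, and use Theorem \ref{THM:homeo} (which applies since $\sub$ is tame and recognisable) to identify $\TS$ with $\varprojlim(\Gamma, f)$, with projections $\pi_i \colon \TS \to \Gamma$ satisfying $f \circ \pi_{i+1} = \pi_i$. Since $\Gamma$ is a finite $1$-dimensional CW complex, it has only finitely many subcomplexes; write $\mathcal{S}$ for this finite set. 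The map $f$ is cellular, so it carries each subcomplex to a subcomplex and hence induces a self-map $F \colon \mathcal{S} \to \mathcal{S}$ given by $F(D) = f(D)$.

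To each CIS $\Lambda$ I would assign the sequence $C_i := \pi_i(\Lambda)$. Two facts make this useful. First, each $C_i$ is a subcomplex: because $\Lambda$ is $\rho$-invariant, and $\pi_i$ carries the flow to a motion along the edges of $\Gamma$, the image $\pi_i(\Lambda)$ is a closed set which, together with any open edge it meets, contains the whole edge and its endpoints; thus $C_i \in \mathcal{S}$. Second, $f(C_{i+1}) = C_i$, immediately from $f \circ \pi_{i+1} = \pi_i$. Therefore the sequence $(C_i)_{i \ge 0}$ is a point of the inverse limit $\varprojlim(\mathcal{S}, F)$ of the finite set $\mathcal{S}$ along the single self-map $F$.

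The crux is to show that $\Lambda \mapsto (C_i)_i$ is injective, which I would do by proving that $\Lambda$ is recovered as $\varprojlim(C_i, f)$. The inclusion $\Lambda \subseteq \varprojlim(C_i, f)$ is clear. For the reverse, given a compatible point $z = (z_i)$ with $z_i \in C_i = \pi_i(\Lambda)$, choose $y^{(i)} \in \Lambda$ with $\pi_i(y^{(i)}) = z_i$; as $\Lambda$ is compact metric there is a convergent subsequence $y^{(i_k)} \to y \in \Lambda$, and since $\pi_j(y^{(i_k)}) = f^{i_k - j}(z_{i_k}) = z_j$ for all $i_k \ge j$, continuity of $\pi_j$ gives $\pi_j(y) = z_j$ for every $j$, so $y = z \in \Lambda$. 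Hence $\Lambda = \varprojlim(C_i, f)$ and the assignment is injective. Finally, $\varprojlim(\mathcal{S}, F)$ is finite: every coordinate of a point of it lies in the eventual image $\mathcal{S}_\infty = \bigcap_{n} F^n(\mathcal{S})$, on which $F$ restricts to a bijection, so the whole sequence is determined by its zeroth coordinate; thus $|\mathcal{C}| \le |\varprojlim(\mathcal{S}, F)| = |\mathcal{S}_\infty| \le |\mathcal{S}| < \infty$.

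I expect the main obstacle to be the two geometric claims about the projections $\pi_i(\Lambda)$: that flow-invariance forces each projection to be a genuine subcomplex (rather than an arbitrary closed subset, which is precisely what makes the target finite), and the compactness argument showing that a CIS is reconstructed as the inverse limit of its projections. Once these are in place, the finiteness reduces to the elementary observation that an inverse limit of a finite set along a single self-map is finite.
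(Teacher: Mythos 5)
Your proposal is correct, but it takes a genuinely different route from the paper's proof. The paper works at a single level: it aims to show that the map $\Lambda \mapsto p_N(\Lambda)$ from CISs to subcomplexes of $\Gamma_N$ is injective, via a hands-on combinatorial argument---given $T \in \Lambda' \setminus \Lambda$, closedness of $\Lambda$ yields a patch $u$ of $T$ occurring in no tiling of $\Lambda$; tameness and recognisability are then used \emph{directly} to show that the shortest legal words $v$ with $u \subset \sub^n(v)$ have length at most $N+1$, so each fits inside a single $N$-collared letter $a_v$, and pulling back by the homeomorphism $\sub_N^{-n}$ produces CISs whose images in $\Gamma_N$ differ on the interior of the edge labelled $a_v$. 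You avoid this patch-pullback analysis entirely: you code each CIS by the full backward sequence $C_i = \pi_i(\Lambda) = p_N(\sub^{-i}(\Lambda))$, prove injectivity of the coding by a soft compactness argument reconstructing $\Lambda = \varprojlim(C_i, f)$ (your diagonal step is sound: $\pi_j(y^{(i)}) = f^{i-j}(z_i) = z_j$ uses only the compatibility of $z$), and then observe that compatible sequences over the finite set $\mathcal{S}$ lie in the eventual image $\mathcal{S}_\infty$, on which $F$ is a bijection, giving at most $|\mathcal{S}|$ of them. What each buys: your argument uses tameness and recognisability only as packaged inside Theorem \ref{THM:homeo}, so it is more modular and would apply verbatim to any flow space presented as an inverse limit of a single finite complex with cellular bonding map in which CISs project to subcomplexes; your $\mathcal{S}_\infty$ step also anticipates the paper's later discussion of eventual ranges. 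The paper's argument is, in principle, sharper: it asserts that a CIS is determined by a \emph{single} subcomplex, which underpins the later correspondence between CISs and eventual ranges and the bound $|\mathcal{C}| \leq |EV|$; your coding determines $\Lambda$ only from the whole sequence, which suffices for finiteness but would not by itself support that later correspondence. (In fairness, the paper's displayed contradiction actually distinguishes $p_N(\sub_N^{-n}(\Lambda))$ from $p_N(\sub_N^{-n}(\Lambda'))$, and passing from the hypothesis $p_N(\Lambda) = p_N(\Lambda')$ to equality at level $n$ is left implicit---$f_N^n$ need not be injective on subcomplexes---so as written the paper, too, really establishes injectivity of the sequence-valued invariant you use; your formulation makes this honest.) Both proofs rest on the same geometric fact, asserted in one line in the paper as in your write-up, that the $p_N$-image of a closed flow-invariant set is a subcomplex.
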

		To reduce notation, we identify without further comment the tilings $T \in \TS_{\sub_N}$ and $f_{N,0}(T)\in \TS_\sub$ where $f_{N,0}$ is the induced forgetful map which removes collaring information on a collared letter $a_v \in \Al_N$.\
		\begin{proof}
			Let $f_N \colon \Gamma_N \to \Gamma_N$ be the induced substitution map on the $N$-collared AP-complex for $\sub$ where $N$ is one greater than the longest bounded legal word for $\sub$, which is well defined by the tameness of $\sub$. Let $\Lambda \in \mathcal{C}$ be a CIS of $\TS$. As $\Lambda$ is invariant under translation $\rho$, the image of $\Lambda$ under the quotient map $p_N \colon \TS \to \Gamma_N$ must be a subcomplex of $\Gamma_N$.
			
			Now, suppose $\Lambda' \in \mathcal{C}$ and that $p_N(\Lambda) = p_N(\Lambda')$. We want to show that $\Lambda$ and $\Lambda'$ must be the same subspace. Suppose for contradiction and without loss of generality, that $\Lambda'\setminus \Lambda$ is non-empty. Let $T$ be a tiling found in $\Lambda'$ but not $\Lambda$. By construction then, $T$ contains a patch of tiles labelled by the word $u \in \Al^*$ which does not appear in any tiling in $\Lambda$. Suppose $u$ contains an expanding letter (if not, extend $u$ in $T$ until it does contain an expanding letter by tameness).
			
			 Let $d$ be the length of the shortest legal word $v \in \hat{\mathcal{L}}_{\sub}$ such that $u$ is a subword of $\sub^i(v)$ ($d$ may be greater than $1$ as $\sub$ is not necessarily minimal). As $u$ contains an expanding letter, so then must every such $v$. Using recognisability, it is not hard to see that any such $v$ of minimal length is of the form $a_1v', v'a_2$ or $a_1v'a_2$ where $v'$ is a bounded word (possibly empty) and $a_1,a_2$ are expanding letters. As $\sub$ is tame, we conclude that $d \leq N+1$.

			 Of those words $v$ of minimal length such that $u$ is a subword of $\sub^i(v)$ for some $i$, let $n$ be the minimal such power. Let $\tilde{V} = \{v \mid |v|=d, u \subset \sub^n(v)\}$. Finally, let $V$ be the set of legal words of length $2N+1$ which contain a word $v \in \tilde{V}$ as a subword. As $d \leq N+1$, it is certainly true that $d\leq 2N+1$, and so $V$ is non-empty because $\tilde{V}$ is non-empty. Note that $V \subset \Al_N'$. In particular, there is a legal $N$-collared letter $a_v$ for every $v\in V$.
			
			Recall that $\sub_N \colon \TS \to \TS$ is a homeomorphism by recognisability, and this function maps orbits to orbits, and so $\sub_N^{-n}(\Lambda)$ and $\sub_N^{-n}(\Lambda')$ are CISs of $\TS$. By our choice of $T$, $\sub_N^{-n}(T)$ is in $\sub_N^{-n}(\Lambda')$ but not $\sub_N^{-n}(\Lambda)$. The tiling $\sub_N^{-n}(T)$ contains a tile $a_v \in V$ and so there exists a $t \in \R$ so that $T_0 = \sub_N^{-n}(T)-t$ has a tile $a_v \in V$ which contains the origin in its interior.
			As $\Lambda$ and $\Lambda'$ are CISs, $T_0 \in \sub_N^{-n}(\Lambda')$ and $T_0 \notin \sub_N^{-n}(\Lambda)$.
			The image of $T_0$ under the quotient map $p_N$ lies on the interior of the edge of the $N$-collared AP-complex $\Gamma_N$ which is labelled by the $N$-collared letter $a_v$.
			If $p_N(\sub_N^{-n}(\Lambda))$ intersected the interior of this edge, then $\sub_N^{-n}(\Lambda)$ would contain an $N$-collared tiling which contained an $a_v$ tile, but then $\Lambda$ would contain a tiling which contained a patch labelled by the word $u$. This contradicts the choice of $u$ not being a patch in any tiling in $\Lambda$.
			
			It follows that if $p_N(\Lambda) = p_N(\Lambda')$ for CISs $\Lambda, \Lambda'$, then $\Lambda = \Lambda'$. Hence, a CIS is fully determined by the associated subcomplex of the $N$-collared AP-complex to which it is sent under the quotient map. There are only finitely many subcomplexes of any AP-complex and so there can only be finitely many CISs in $\mathcal{C}$ of $\TS$.
		\end{proof}
Lemma \ref{LEM:finite-CISs} has a similar flavour to a result of Durand that says that a linearly recurrent subshift has a finite number of non-periodic subshift factors \cite{D:lin-recurrent}.  

		\begin{remark}
		It is important to note that the choice of $N$ large enough is key in the proof of the above Lemma. If $N$ is not chosen large enough, then the quotient map $p_N \colon \TS \to \Gamma_N$ may send distinct CISs to the same subcomplex of $\Gamma_N$.
		
		As an example, consider the substitution $\sub \colon a \mapsto aba,\: b \mapsto bbab,\: c \mapsto aa$ whose tiling space has exactly one non-empty proper CIS corresponding to the tilings which do not contain the patch labelled by the word $aa$. The $0$-collared AP-complex is `too small' to distinguish this CIS from the entire space $\TS$ in the way described in the above proof.
		
		\end{remark}

The following theorem gives a homeomorphism between a CIS and the inverse limit of a subdiagram of $\Gamma_N$.  
		\begin{thm}\label{THM:subcomplex}
			Let $\sub$ be a tame recognisable substitution.  Let $f_N \colon \Gamma_N \to \Gamma_N$ be the induced substitution map on the $N$-collared AP-complex for $\sub$. There exists an integer $n$ so that for all $\Lambda \in \mathcal{C}$, there exists a subcomplex $\Gamma_{\Lambda} \subset \Gamma_N$ such that  $f_N^{n}(\Gamma_{\Lambda}) = \Gamma_{\Lambda}$ and $\varprojlim(\Gamma_{\Lambda},f_N^n) = \Lambda$.
		\end{thm}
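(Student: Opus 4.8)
The plan is to take $\Gamma_\Lambda$ to be the image $p_N(\Lambda)$ of the CIS under the quotient map $p_N \colon \TS \to \Gamma_N$, to produce a single power $n$ that simultaneously fixes every $\Gamma_\Lambda$, and then to realise $\Lambda$ as the corresponding sub-inverse-limit by restricting the homeomorphism of Theorem \ref{THM:homeo}.

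First I would produce the uniform integer $n$. Recognisability makes $\sub \colon \TS \to \TS$ a homeomorphism, and since orbits coincide with path components here, $\sub$ carries orbits to orbits; by Lemma \ref{LEM:orbits} both $\sub$ and $\sub^{-1}$ send CISs to CISs, so $\Lambda \mapsto \sub(\Lambda)$ is a bijection of $\mathcal{C}$. By Lemma \ref{LEM:finite-CISs} the set $\mathcal{C}$ is finite, hence this bijection is a permutation of finite order, and taking $n$ to be (a multiple of) that order gives a single $n$ with $\sub^n(\Lambda) = \Lambda$ for every $\Lambda \in \mathcal{C}$. This is exactly the source of the uniform $n$ in the statement. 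Setting $\Gamma_\Lambda := p_N(\Lambda)$, translation-invariance of $\Lambda$ forces this to be a subcomplex of $\Gamma_N$ (as observed in the proof of Lemma \ref{LEM:finite-CISs}), and iterating the defining relation $f_N \circ p_N = p_N \circ \sub$ gives $f_N^n(\Gamma_\Lambda) = p_N(\sub^n(\Lambda)) = p_N(\Lambda) = \Gamma_\Lambda$. Thus $f_N^n$ restricts to a self-map of $\Gamma_\Lambda$ and $\varprojlim(\Gamma_\Lambda, f_N^n)$ sits naturally inside $\varprojlim(\Gamma_N, f_N^n)$.

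Next I would pass from $f_N$ to its $n$-th power. By the standard cofinality (telescoping) argument the $n$-th powers form a cofinal subsystem, so $\varprojlim(\Gamma_N, f_N) \cong \varprojlim(\Gamma_N, f_N^n)$, and under this identification the homeomorphism $h$ of Theorem \ref{THM:homeo} becomes $x \mapsto (p_N(x), p_N(\sub^{-n}(x)), p_N(\sub^{-2n}(x)), \ldots)$. Restricting this map to $\Lambda$ gives a candidate homeomorphism onto $\varprojlim(\Gamma_\Lambda, f_N^n)$: it is well-defined and lands in the sub-inverse-limit because $\sub^{-kn}(\Lambda) = \Lambda$ forces every coordinate $p_N(\sub^{-kn}(y))$ to lie in $\Gamma_\Lambda$ for $y \in \Lambda$; it is injective because $h$ is; and $\Lambda$ is compact while the target is Hausdorff, so it only remains to prove surjectivity.

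Surjectivity is the main obstacle: I must show that a point of $\varprojlim(\Gamma_\Lambda, f_N^n)$ reconstructs to a tiling lying in $\Lambda$, not merely in $\TS$. Given $(\gamma_k)_k$ in this inverse limit, let $x = h^{-1}((\gamma_k)) \in \TS$, so $p_N(\sub^{-kn}(x)) = \gamma_k$, and for each $k$ choose $y_k \in \Lambda$ with $p_N(y_k) = \gamma_k$; then $\sub^{-kn}(x)$ and $y_k$ carry the same $N$-collared tile at the origin. Here the choice $N = N_\sub$ is essential: border forcing (Lemma \ref{LEM:border-forcing}) ensures that applying $\sub^{kn}$ produces level-$kn$ supertiles about the origin that agree for $x = \sub^{kn}(\sub^{-kn}(x))$ and for $\sub^{kn}(y_k)$, and tameness (through the growth of expanding letters) ensures these agreeing patches exhaust $\R$ as $k \to \infty$. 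Since $\sub^{kn}(y_k) \in \sub^{kn}(\Lambda) = \Lambda$ and these tilings converge to $x$ in the tiling metric while $\Lambda$ is closed, we get $x \in \Lambda$. This establishes the homeomorphism $\Lambda \cong \varprojlim(\Gamma_\Lambda, f_N^n)$ and completes the proof.
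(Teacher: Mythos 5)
Your proposal is correct, and its first two thirds coincide with the paper's proof: the uniform $n$ obtained from recognisability, Lemma \ref{LEM:orbits} and the finiteness of $\mathcal{C}$ (Lemma \ref{LEM:finite-CISs}); the definition $\Gamma_\Lambda = p_N(\Lambda)$ and the relation $f_N^n(\Gamma_\Lambda) = \Gamma_\Lambda$ via $p_N \circ \sub = f_N \circ p_N$; and injectivity by restricting a telescoped version of the homeomorphism $h$ of Theorem \ref{THM:homeo}. Where you genuinely diverge is surjectivity. The paper argues by contradiction: if $(x_0, x_1, \ldots) \in \varprojlim(\Gamma_\Lambda, f_N^n)$ were not in the image, some finite prefix $(x_0,\ldots,x_i)$ would describe a patch occurring in no tiling of $\Lambda$; shifting the sequence (legitimate since the shift on the inverse limit is a homeomorphism) then forces the single coordinate $x_i \in \Gamma_\Lambda$ to describe the origin tile of no tiling in $\Lambda$, contradicting $\Gamma_\Lambda = p(\Lambda)$. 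You instead argue by approximation: choose $y_k \in \Lambda$ with $p_N(y_k) = \gamma_k$, note that $\sub^{kn}(y_k) \in \sub^{kn}(\Lambda) = \Lambda$ agrees with $x = h^{-1}((\gamma_k))$ on balls whose radii tend to infinity---because the $N$-collar with $N = N_\sub$ guarantees expanding letters within distance $N$ on both sides of the origin, so the substituted patches grow in both directions---and conclude $x \in \Lambda$ from closedness of $\Lambda$. Your route is more explicit about where tameness and the choice of $N$ enter, and it replaces the paper's rather terse shifting argument with a concrete limit argument; the cost is extra bookkeeping, namely the cofinality identification $\varprojlim(\Gamma_N, f_N) \cong \varprojlim(\Gamma_N, f_N^n)$, which the paper compresses into the phrase that $h_\Lambda$ is a ``telescoped version'' of $h$. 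Two small remarks: when $\gamma_k$ is a vertex of $\Gamma_N$, the tilings $\sub^{-kn}(x)$ and $y_k$ need only share one of the two $N$-collared letters adjacent to the origin (because of the identifications defining the AP-complex at $t=0$), but since that single letter carries a $2N+1$-letter window and $N = N_\sub$, the shared window still contains expanding letters on both sides of the origin, so your growth estimate survives; and the agreement of the substituted patches around the origin is really just locality of the map $\sub \colon \TS_\sub \to \TS_\sub$ rather than border forcing---border forcing is needed only where you already invoke Theorem \ref{THM:homeo}, to know that $h$ is bijective so that $x$ exists and is unique.
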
		
		\begin{proof}
			As $\sub$ is recognisable, the substitution acts as a homeomorphism on $\TS$ and so the substitution permutes CISs of the tiling space. By Lemma \ref{LEM:finite-CISs}, $\mathcal{C}$ is finite. As such, an integer $n$ can be chosen so that $\sub^n(\Lambda) = \Lambda$ for all $\Lambda \in \mathcal{C}$.
		
			Let $p_N \colon\TS \to \Gamma_N$ be the quotient map from the tiling space to the $N$-collared AP-complex. Let $p = p_N|_\Lambda$, be the restriction of the quotient map to $\Lambda$. As $\Lambda$ is a CIS, the image $\Gamma_{\Lambda}$ of $p$ is a subcomplex of $\Gamma_N$. Recall that $p_N \circ \sub = f_N\circ p_N$, and so
			\begin{equation}\label{EQN:commute}
				p \circ \sub^n = f_N^n \circ p.
			\end{equation}
			
			Let $h_\Lambda \colon \Lambda \to \varprojlim(\Gamma_{\Lambda},f_N^n)$ be defined by $$h_\Lambda(x) = (p(x), p(\sub^{-n}(x)), p(\sub^{-2n}(x)), \ldots )$$ which is well defined by \ref{EQN:commute}. As $h_\Lambda$ is a telescoped version of $h$ with modified domain and codomain, it is clearly injective, so it only remains to show that $h_\Lambda$ is surjective onto the inverse limit.
			
			A point in the inverse limit corresponds to a unique tiling in the tiling space as $\sub_N$ forces the border. Suppose $(x_0, x_1, x_2,\ldots) \in \varprojlim(\Gamma_\Lambda,f_N^n)$ was not in the image of $h_\Lambda$, then there exists some $i$ for which the patch described by the finite subsequence of points $(x_0 ,x_1,\ldots, x_i)$ does not appear in a tiling in $\Lambda$. But this means that the shifted sequence $(x_i, x_{i+1},\ldots)$ is also not in the image of $h_\Lambda$, as the shift is a homeomorphism, and so the point $x_i\in \Gamma_\Lambda$ must not describe the label $w_0$ of the tile at the origin of any tiling in $\Lambda$. This is impossible by how the Anderson-Putnam complex and the quotient map $p$ are defined, as $\Gamma_\Lambda$ is the image of $\Lambda$ under $p$. It follows that no such point in the inverse limit exists and $h_\Lambda$ is surjective.
		\end{proof}

Just as each CIS is homeomorphic to the inverse limit of a subdiagram of $\Gamma_N$, each quotient of $\TS$ by a CIS is homeomorphic to the inverse limit of a quotient of $\Gamma_N$ by a subdiagram.  
The proof of this fact uses the following lemma.  

                \begin{lemma}\label{LEM:quotient-surjective}
                Let $(X_i,\sub_i)$ and $(Y_i,\psi_i)$ be two inverse systems of compact Hausdorff spaces $X_i, Y_i$, and let $X$ and $Y$ denote the respective inverse limits.  
                Let $p_i \colon X \to X_i$ and $q_i \colon Y \to Y_i$ be the projection maps onto approximants.
                Let $s_i \colon X_i \to Y_i$ be a sequence of continuous maps such that $s_i \circ \sub_i = \psi_i \circ s_{i+1}$, and let $s \colon X \to Y$ be the unique continuous map such that $s_i \circ p_i = q_i \circ s$ for every $i\geq 0$.
                If $\sub_i$ and $s_i$ are surjective for every $i\geq 0$ then $s$ is a surjection.
                \end{lemma}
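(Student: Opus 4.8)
The plan is to fix an arbitrary point $y = (y_0, y_1, \ldots) \in Y$ and to construct a preimage under $s$ by a compactness argument in $X$. For each $n \geq 0$ set $T_n = \{ x \in X : s_n(p_n(x)) = y_n\}$. Since each $Y_n$ is Hausdorff the singleton $\{y_n\}$ is closed, and $s_n \circ p_n$ is continuous, so $T_n$ is a closed subset of $X$; here $X$ is compact, being a closed subset of the Tychonoff product $\prod_i X_i$ of compact spaces. Because $q_n \circ s = s_n \circ p_n$ identifies the $n$th coordinate of $s(x)$ with $s_n(p_n(x))$, we have $s^{-1}(y) = \bigcap_{n \geq 0} T_n$, so it suffices to show this intersection is non-empty, and by compactness it is enough to verify that the family $\{T_n\}$ has the finite intersection property.

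First I would observe that the commutation relation $s_i \circ \sub_i = \psi_i \circ s_{i+1}$, together with the compatibility $\psi_i(y_{i+1}) = y_i$ of the coordinates of $y$, propagates solutions downwards: if $x \in X$ satisfies $s_n(p_n(x)) = y_n$, then writing $x_j = p_j(x)$ and using $x_i = \sub_i(x_{i+1})$ one computes $s_i(x_i) = s_i(\sub_i(x_{i+1})) = \psi_i(s_{i+1}(x_{i+1}))$, and iterating this down from level $n$ gives $s_i(x_i) = y_i$ for every $i < n$. Consequently $T_{n+1} \subseteq T_n$, the family is nested, and the finite intersection property reduces to the single statement that each $T_n$ is non-empty.

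The heart of the argument is the non-emptiness of $T_n$, and this is where both surjectivity hypotheses enter. Since $s_n$ is surjective, choose $x_n \in X_n$ with $s_n(x_n) = y_n$, and define the lower coordinates by $x_i := \sub_i(x_{i+1})$ for $0 \leq i < n$; by the downward propagation above these automatically satisfy $s_i(x_i) = y_i$. To obtain a genuine point of $X$ one must also extend upwards, and here I would invoke surjectivity of the bonding maps: for each $i \geq n$ inductively choose $x_{i+1} \in X_{i+1}$ with $\sub_i(x_{i+1}) = x_i$. The resulting sequence $x = (x_0, x_1, \ldots)$ satisfies $\sub_i(x_{i+1}) = x_i$ for all $i$, hence lies in $X$, and by construction $p_n(x) = x_n$, so $x \in T_n$.

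With each $T_n$ non-empty, closed, and nested, the finite intersection property holds, and compactness of $X$ yields a point $x \in \bigcap_n T_n$; then $q_n(s(x)) = s_n(p_n(x)) = y_n$ for all $n$, so $s(x) = y$ and $s$ is surjective. The one step I expect to be the real content, rather than routine bookkeeping, is the upward extension used to populate $T_n$: this is exactly the place that fails without surjectivity of the bonding maps $\sub_i$, whereas the commutation relation alone secures all the lower coordinates for free.
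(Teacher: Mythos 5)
Your proof is correct and is essentially the paper's proof in element-wise form: your sets $T_n$ coincide exactly with the paper's $p_n^{-1}(D_n)$ where $D_n = s_n^{-1}(q_n(\{y\}))$, your downward propagation is their nestedness argument, and your finite-intersection-property step is their appeal to Cantor's intersection theorem. The only difference is presentational: you spell out the upward extension by successively choosing $\sub_i$-preimages, which underlies the paper's terse claim that each $p_i^{-1}D_i$ is non-empty by the surjectivity of the $\sub_i$.
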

                \begin{proof}

\centerline{
\xymatrix{
X \ar@{>}_s[ddd] \ar@{>}_{p_0}[dr] \ar@{>}_{p_1}[drr] \ar@{>}^{p_2}[drrr] & & & & \\
 & X_0 \ar@{>>}_{s_0}[d] & X_1 \ar@{>>}^{\varphi_0}[l] \ar@{>>}_{s_1}[d] & X_2 \ar@{>>}^{\varphi_1}[l] \ar@{>>}_{s_2}[d] & \cdots  \ar@{>>}^{\varphi_2}[l] \\
 & Y_0  & Y_1 \ar@{>}_{\psi_0}[l] & Y_2 \ar@{>}_{\psi_1}[l] & \cdots  \ar@{>}_{\psi_2}[l] \\
Y \ar@{>}^{q_0}[ur] \ar@{>}^{q_1}[urr] \ar@{>}_{q_2}[urrr] %
& & & & 
}
}

                Let $y \in Y$.
                As $Y$ is Hausdorff and compact, $\{y\}$ is closed and hence compact.
                It follows that $q_i(\{y\})$ is compact and hence closed. As $s_i$ is continuous, the preimage $D_i = s_i^{-1}(q_i(\{y\}))$ is then closed, hence compact, and as $s_i$ is surjective, $D_i \neq \emptyset$.
                Note that $\psi_i(q_i(\{y\})) = q_{i-1}(\{y\})$ and so $\psi_i(s_i(D_i)) = q_{i-1}(\{y\})$.
                By commutativity then, $s_{i-1}(\sub_i (D_i)) = q_{i-1}(\{y\})$. This means that $\sub_i D_i$ is a compact subset of $D_{i-1}$ and so by continuity and commutativity, $p_i^{-1}(D_i)$ is a compact subset of $p_{i-1}^{-1}D_{i-1}$.
                Further, each $p_i^{-1} D_i$ is non-empty by the surjectivity of $\sub_i$ for each $i$.
                It follows that $p_0^{-1}D_0 \supset p_1^{-1}D_1 \supset p_2^{-1}D_2 \supset \cdots$ is a nested sequence of closed non-empty subsets of the compact space $X$.
                By Cantor's intersection theorem, $X_0 = \bigcap_{i\geq 0} p_i^{-1} D_i$ is non-empty, and by construction for every $x\in X_0$, $s(x)=y$.
                \end{proof}

                \begin{thm}\label{THM:quotient}
			Let $\sub$ be a tame recognisable substitution.  Let $f_N \colon \Gamma_N \to \Gamma_N$ be the induced substitution map on the $N$-collared AP-complex for $\sub$, let $\Lambda \in \mathcal{C}$, and let $\Gamma_\Lambda$ denote the subcomplex of $\Gamma$ that corresponds to $\Lambda$.  Then for sufficiently large $N$ and for some $n\in\N$, $\TS/\Lambda$ is homeomorphic to $\varprojlim(\Gamma_N/\Gamma_{\Lambda},f_N^n)$.  
                \end{thm}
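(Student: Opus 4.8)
The plan is to realise $\TS/\Lambda$ as the inverse limit of the quotient system by building a natural continuous bijection and then invoking compactness. Fix $N \ge N_\sub$ and let $n \in \N$ be the integer supplied by Theorem \ref{THM:subcomplex}, so that $f_N^n(\Gamma_\Lambda) = \Gamma_\Lambda$ and $\varprojlim(\Gamma_\Lambda, f_N^n) = \Lambda$. Telescoping the system of Theorem \ref{THM:homeo} leaves its inverse limit unchanged, so $\TS \cong \varprojlim(\Gamma_N, f_N^n)$ with projections $p_i = p_N \circ \sub^{-in}$; I henceforth identify $\TS$ with this inverse limit. Since $f_N^n$ carries $\Gamma_\Lambda$ onto itself, it descends to a continuous self-map of $\Gamma_N/\Gamma_\Lambda$, which I continue to denote $f_N^n$ as in the statement, and the quotient maps $\pi_i \colon \Gamma_N \to \Gamma_N/\Gamma_\Lambda$ satisfy $\pi_i \circ f_N^n = f_N^n \circ \pi_{i+1}$. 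They therefore assemble into a continuous map $\pi \colon \TS \to \varprojlim(\Gamma_N/\Gamma_\Lambda, f_N^n)$.

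First I would check that $\pi$ is surjective by applying Lemma \ref{LEM:quotient-surjective} with $X_i = \Gamma_N$, bonding maps $f_N^n$, $Y_i = \Gamma_N/\Gamma_\Lambda$, bonding maps the induced $f_N^n$, and $s_i = \pi_i$. The maps $\pi_i$ are surjective by construction, and $f_N^n$ is surjective because every legal collared tile of $\Gamma_N$ occurs inside an $n$-fold supertile, so the hypotheses of the lemma are met and $\pi$ is onto.

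The key point is to identify precisely which points $\pi$ collapses, i.e.\ to show it descends to a bijection on $\TS/\Lambda$. Observe that $\pi(x)$ equals the basepoint (the constant sequence at the class $[\Gamma_\Lambda]$) if and only if $p_i(x) \in \Gamma_\Lambda$ for every $i$, which by Theorem \ref{THM:subcomplex} is exactly the condition $x \in \Lambda$; thus $\pi^{-1}(\text{basepoint}) = \Lambda$. Off $\Lambda$, I would argue that $\pi$ is injective. Suppose $\pi(x) = \pi(y)$ with $x \ne y$. In $\Gamma_N/\Gamma_\Lambda$ two points are identified precisely when they coincide or both lie in $\Gamma_\Lambda$, so for each $i$ either $p_i(x) = p_i(y)$ or both $p_i(x), p_i(y) \in \Gamma_\Lambda$. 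As $x \ne y$ there is an index $i_0$ with $p_{i_0}(x) \ne p_{i_0}(y)$, forcing $p_{i_0}(x), p_{i_0}(y) \in \Gamma_\Lambda$; applying $f_N^n$ and using $f_N^n(\Gamma_\Lambda) = \Gamma_\Lambda$ then gives $p_i(x), p_i(y) \in \Gamma_\Lambda$ for all $i \le i_0$. For $i > i_0$, if some $p_{i_1}(x) \notin \Gamma_\Lambda$ the coordinatewise condition forces $p_{i_1}(x) = p_{i_1}(y)$, and applying the bonding map repeatedly propagates this equality downward to $p_{i_0}(x) = p_{i_0}(y)$, a contradiction; hence $p_i(x), p_i(y) \in \Gamma_\Lambda$ for all $i$, so $x, y \in \Lambda$. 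Therefore $\pi$ is injective away from $\Lambda$ and induces a continuous bijection $\bar\pi \colon \TS/\Lambda \to \varprojlim(\Gamma_N/\Gamma_\Lambda, f_N^n)$.

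Finally, $\TS/\Lambda$ is compact (a quotient of the compact space $\TS$) and the target is Hausdorff (an inverse limit of compact Hausdorff complexes), so the continuous bijection $\bar\pi$ is automatically a homeomorphism. I expect the injectivity step to be the main obstacle: it requires controlling how membership in $\Gamma_\Lambda$ propagates along the bonding maps, which is exactly where the $f_N^n$-invariance of $\Gamma_\Lambda$ from Theorem \ref{THM:subcomplex} is essential; surjectivity, by contrast, is handled cleanly by Lemma \ref{LEM:quotient-surjective}.
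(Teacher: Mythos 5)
Your proposal is correct and follows essentially the same route as the paper's proof: both telescope the Anderson--Putnam inverse limit from Theorems \ref{THM:homeo} and \ref{THM:subcomplex}, obtain surjectivity of the induced map to $\varprojlim(\Gamma_N/\Gamma_\Lambda, f_N^n)$ via Lemma \ref{LEM:quotient-surjective}, and prove injectivity off $\Lambda$ by the coordinatewise observation that membership in $\Gamma_\Lambda$ propagates down the bonding maps (so a sequence is eventually outside $\Gamma_\Lambda$ unless the point lies in $\Lambda$), before concluding with the compact-to-Hausdorff argument. Your propagation argument for injectivity is just a contrapositive reformulation of the paper's ``the sequences differ beyond $\max\{i,j\}$'' step, so there is no substantive difference.
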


                \begin{proof}
\centerline{
\xymatrix{
\Omega \ar@{>}^{q_\Lambda}[r] \ar@{>}_{p_N}[d] & \Omega/\Lambda \ar@{>}[d] \\
\Gamma_N \ar@{>}_{Q_\Lambda}[r] & \Gamma_N/\Gamma_\Lambda \\
}
}

Consider the diagram above, where the map in the bottom row is the canonical quotient map from $\Gamma_N$ to $\Gamma_N/\Lambda$, and the map in the right column is the unique continuous map making the diagram commute.  
Let $n$ be as given in Theorem \ref{THM:subcomplex} and $F_N$ be the self-map on $\Gamma_N/\Lambda$ that makes the diagram

\centerline{
\xymatrix{
\Gamma_N \ar@{>}[r] \ar@{>}_{f_N^n}[d] & \Gamma_N/\Gamma_\Lambda \ar@{>}^{F_N}[d] \\
\Gamma_N \ar@{>}[r]  & \Gamma_N/\Gamma_\Lambda  \\
}
}
commute.  

Then the universal property of the inverse limit yields a diagram 

\centerline{
\xymatrix{\label{DIAG:quotient-diagram}
\Omega \ar@{>}^{q_\Lambda}[r] \ar@{>}_{h}[d] & \Omega/\Lambda \ar@{>}^H[d] \\
\varinjlim (\Gamma_N,f_N^n) \ar@{>}_s[r] & \varinjlim (\Gamma_N/\Gamma_\Lambda, F_N). \\
}
}

The map $s$ in the bottom row of this diagram is surjective by Lemma \ref{LEM:quotient-surjective}, and, as $h$ is a homeomorphism, and hence surjective, $s\circ h$ is surjective, which implies that the map $H$ in the right column is surjective as well.  

$H$ is also injective: pick two points $y_1, y_2\in \Omega/\Lambda$ with the same image.  
If neither point is $\Lambda$, then their $q_\Lambda$-preimages $x_1, x_2 \in\Omega$ are distinct points not in $\Lambda$.  
Thus $h(x_1)$ and $h(x_2)$ are sequences that differ beyond some finite index $i$.  
Moreover, there is a finite index $j$ beyond which neither sequence has entries in $\Gamma_\Lambda$.  
Then the images in $\varinjlim (\Lambda_N/\Lambda_\Gamma,F_N)$ of $y_1$ and $y_2$ are $s(h(x_1)), s(h(x_2))$, the entries of which differ beyond index $\max \{ i, j\}$.  

If $y_1 = \Lambda$, then a similar argument shows that the image of $y_2$ is a sequence, the entries of which are not $\Gamma_\Lambda$ beyond a certain index, while the image of $y_1$ has entry $\Lambda$ at every index, and so these images are different.  

Thus $H$ is a continuous bijection.  
Its domain is a quotient of a compact space, and hence is compact.  
$\Gamma_N/\Gamma_\Lambda$ is Hausdorff, as it is a quotient obtained from a compact Hausdorff space by collapsing a compact subspace to a point.  
Thus the codomain of $H$ is an inverse limit of Hausdorff spaces, and hence is Hausdorff.  
Then $H$ is a continuous bijection from a compact space to a Hausdorff space, and hence is a homeomorphism.  
                \end{proof}

		\subsection{Identifying Closed Invariant Subspaces}
		
		Let $\sub$ be a tame recognisable substitution on $\Al$, let $K$ be a subcomplex of $\Gamma_N$ and let $EV(K) = \bigcup_{i \geq 0} (f_N^n)^i(K)$ be the \emph{eventual range} of $K$. The eventual range of a subcomplex is itself a subcomplex. The set of eventual ranges $EV = \{EV(K) \mid K \mbox{ is a subcomplex of } \Gamma_N\}$ therefore forms a finite set.  

Every CIS in $\mathcal{C}$ corresponds to a unique subcomplex in $EV$ given by the image of the CIS under the quotient map $p_N$ to the $N$-collared AP-complex, so $|\mathcal{C}| \leq |EV|$.  
This inequality will often be strict: consider as an example the Chacon substitution $\sub\colon a\mapsto aaba, b\mapsto b$, which is minimal, so there is only one non-trivial CIS, yet for any $n$, the $n$-collared AP-complex will have an element of $EV$ consisting of a single edge that corresponds to a collared $b$-tile.

One can also be in the situation where an eventual range contains multiple expanding edges, yet does not correspond to a CIS: consider the augmented Fibonacci with a handle substitution $\sub \colon a \mapsto aab, b \mapsto ab, c \mapsto c, d \mapsto bca$, where $B = \{c\}$ and so $N=2$ and we have the subcomplex comprising the edges
$$\Gamma = \{a_{abaab}, a_{ababa}, a_{ababc}, a_{baaba}, a_{bcaab}, a_{caaba}, b_{aabaa}, b_{aabab}, b_{babaa}, b_{babca}\}.$$
This subcomplex is an eventual range as $f_N^n(\Gamma) = \Gamma$ but corresponds to no CIS in $\mathcal{C}$.

One can reduce the search for eventual ranges which correspond to a CIS by noting that whenever $\Lambda$ is a CIS, by virtue of $\Lambda$ being translation-invariant, the image of $\Lambda$ under $p_N$ must be a subcomplex which has no leaves (a leaf is a vertex with degree exactly one). It is not immediately clear whether this gives a sufficient condition for identifying all subcomplexes of $\Gamma_N$ which correspond to a CIS in $\mathcal{C}$.

\begin{question}
For a tame recognisable substitution, is there a one-to-one correspondence between the set of leafless eventual ranges of $\Gamma_N$ and the set of CISs $\mathcal{C}$? If not, what condition on a subcomplex $\Gamma \subset \Gamma_N$ is sufficient for $\Gamma$ to correspond to a CIS?
\end{question}
		\section{Examples}\label{SEC:non-min-examples}

		\begin{example}\label{ex:fib+1}
		We define the \emph{Fibonacci substitution with one handle} to be given by $$\sub\colon 0\mapsto 001, \:1\mapsto 01, \:2\mapsto 021$$
		
		By substituting $1$-collared letters (and noting that $B=\emptyset$), we find that there are two non-empty invariant subcomplexes $\Gamma_{\Lambda_1}$ and $\Gamma_{\Lambda_2}$, both fixed under $\sub_1$, corresponding to the collections of $1$-collared letters
		$$\Gamma_{\Lambda_1} = \cup\{[0_{001}], [1_{010}], [0_{100}], [0_{101}]\}$$
		and
		$$\Gamma_{\Lambda_2} = \cup\{[0_{001}], [1_{010}], [2_{021}], [0_{100}], [0_{101}], [0_{102}], [1_{210}]\}.$$
		The $1$-collared AP-complex appears in Figure \ref{fig:fib+1}. An oriented edge from $ab$ to $bc$ denotes an edge labelled by the the letter $b_{abc}$ in the alphabet $\Al_1$ of $1$-collared letters.
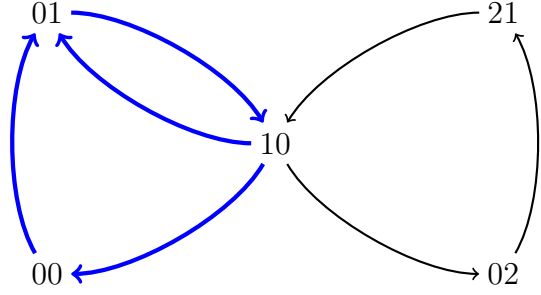
\begin{figure}[H]	
 $$
  \begin{tikzpicture}[->, node distance=3cm, auto, text=black] 

\node [circle,inner sep = 0pt, outer sep = 2pt, minimum size=2mm] (10j) at (0.000000,0.000000) {$10$};
\node [circle,inner sep = 0pt, outer sep = 2pt, minimum size=2mm] (01j) at (-2.999316,1.732446) {$01$};
\node [circle,inner sep = 0pt, outer sep = 2pt, minimum size=2mm] (00j) at (-2.999316,-1.732446) {$00$};

\draw [->,ultra thick, bend left, draw=blue, looseness=0.7] (01j) to (10j);
\draw [->,ultra thick, bend left, draw=blue, looseness=0.7] (10j) to (00j);
\draw [->,ultra thick, bend left, draw=blue, looseness=0.7] (10j) to (01j);
\draw [->,ultra thick, bend left, draw=blue, looseness=0.7] (00j) to (01j);

\node [circle,inner sep = 0pt, outer sep = 2pt, minimum size=2mm] (21k) at (2.999316,1.732446) {$21$};
\node [circle,inner sep = 0pt, outer sep = 2pt, minimum size=2mm] (02k) at (2.999316,-1.732446) {$02$};

\draw [->,thick, bend right, looseness=0.7] (10j) to (02k);
\draw [->,thick, bend right, looseness=0.7] (02k) to (21k);
\draw [->,thick, bend right, looseness=0.7] (21k) to (10j);


\end{tikzpicture}
	$$
	\caption{The $1$-collared AP-complex for the Fibonacci substitution with one handle, with the subcomplex $\Gamma_{\Lambda_1}$ coloured blue.}
	\label{fig:fib+1}
	\end{figure}

		The subcomplex $\Gamma_{\Lambda_1}$ in blue corresponds to a CIS given by considering the restriction of the substitution to the subalphabet $\{0,1\}$ which is (a re-encoding of) the Fibonacci substitution which is connected and has first cohomology $\check{H}^1(\TS_{Fib}) \cong \Z^2$. The subcomplex $\Gamma_{\Lambda_2}$ corresponds to the CIS which is the entire tiling space, which is connected and has first cohomology $\check{H}^1(\TS) \cong \varinjlim\left(\mathbb{Z}^3,\left(\begin{smallmatrix}1&1&0\\1&2&0\\1&1&1\end{smallmatrix}\right)\right) \cong \Z^3$, where the unimodular matrix $\left(\begin{smallmatrix}1&1&0\\1&2&0\\1&1&1\end{smallmatrix}\right)$ is found by choosing appropriate generators of $H^1(\Gamma_1)$. The only other CIS is the empty set.
		
		So $\check{H}^\ast(D_{\TS})$ is given by the diagrams
		$$\begin{array}{rl}
		\check{H}^0(D_{\TS}) \colon & \Z \to \Z \to 0 \\
		\check{H}^1(D_{\TS}) \colon & \Z^3 \to \Z^2 \to 0
		\end{array}$$
		We can use Theorem \ref{THM:quotient} to see, as $\Gamma_1/\Gamma_{\Lambda_1}$ is a circle and $\sub_1$ acts on this quotient complex by a map which is homotopic to the identity, that $\check{H}^i(\TS/\Lambda_1) \cong \varinjlim(H^i(S^1),\operatorname{Id}) \cong \mathbb{Z}$ for $i=0,1$.
		
		So $\check{H}^\ast(D^{\TS})$ is given by the diagrams
		$$\begin{array}{rl}
		\check{H}^0(D^{\TS}) \colon & \Z \to \Z \to \Z \\
		\check{H}^1(D^{\TS}) \colon & 0 \to \Z \to \Z^3
		\end{array}$$
		
		Alternatively, we could have used the fact that $\Lambda_1$ is a closed connected subspace of $\TS$ and so we get an exact sequence in reduced \Cech cohomology
		
		$$0 \to \check{H}^1(\TS/\Lambda) \to \check{H}^1(\TS) \to \check{H}^1(\Lambda) \to 0$$
		which splits (as $\check{H}^1(\Lambda) \cong \Z^2$) to give $\check{H}^1(\TS) \cong \check{H}^1(\TS/\Lambda) \oplus \Z^2$. As above, we can identify $\check{H}^1(\TS/\Lambda)$ with $H^1(S^1)$ and so $\check{H}^1(\TS) \cong \Z^3$.
		
		This distinguishes $\TS$ from the tiling space associated to the Tribonacci substitution which has $\check{H}^0(\TS_{\text{Trib}}) \cong \Z$ and $\check{H}^1(\TS_{\text{Trib}}) \cong \Z^3$ but no proper, non-empty CISs. So the diagrams $\check{H}^\ast(D_{\TS_{\text{Trib}}})$ and $\check{H}^\ast(D^{\TS_{\text{Trib}}})$ have a different shape and so cannot be isomorphic to the diagrams for $\sub$.

See Example \ref{ex:fib-proximal} for an example of a substitution with the same inclusion and quotient cohomology diagrams that nevertheless gives rise to a different tiling space.  
		\end{example}

		Consider the following two substitutions.

		`Two Tribonaccis with a bridge':
		$$\sub_1 \colon 0\mapsto 0201 , \:1\mapsto 001, \:2\mapsto 0, \:\overline{0}\mapsto \overline{0}\overline{2}\overline{0}\overline{1}, \: \overline{1}\mapsto \overline{0}\overline{0}\overline{1}, \:\overline{2}\mapsto \overline{0}, \: X\mapsto 1\overline{0}$$
		`Quadibonacci and Fibonacci with a bridge':
	$$\sub_2 \colon 0\mapsto 0201 , \:1\mapsto 0301, \:2\mapsto 001, \:3\mapsto 0, \: \overline{0}\mapsto \overline{0}\overline{0}\overline{1}, \:\overline{1}\mapsto \overline{0}\overline{1}, \: X\mapsto 1\overline{0}$$
	
	\begin{prop}\label{PROP:trib+trib}
	$\check{H}^\ast(\TS_{\sub_1})$ is isomorphic to $\check{H}^\ast(\TS_{\sub_2})$ but they have degree $1$ inclusion cohomology diagrams
	
		$$\begin{tikzpicture}[node distance=1cm, auto]
  		\node (1) {};
  		\node (Y) [below of=1] {};
  		\node (X) [right of=1] {$\Z^6$};
  		\node (2) [below of=X] {$\Z^6$};
  		\node (3) [right of=2] {};
  		\node (4) [below of=Y] {$\Z^3$};
  		\node (5) [right of=4] {};
  		\node (6) [right of=5] {$\Z^3$};
  		\node (7) [below of=4] {};
  		\node (8) [right of=7] {$0$};
  		\node (9) [right of=8]{};
  		\draw[->] (2) to node [swap] {} (4);
  		\draw[->] (2) to node [swap] {} (6);
  		\draw[->] (4) to node [swap] {} (8);
  		\draw[->] (6) to node [swap] {} (8);
  		\draw[->] (X) to node [swap] {} (2);
  		
  		\node (A) [left of=4, node distance=1.5cm] {$\check{H}^1(D_{\TS_{1}})\colon$};
  		
  		\node (1b) [right of=1, node distance=6cm]{};
  		\node (Yb) [below of=1b] {};
  		\node (Xb) [right of=1b] {$\Z^6$};
  		\node (2b) [below of=Xb] {$\Z^6$};
  		\node (3b) [right of=2b] {};
  		\node (4b) [below of=Yb] {$\Z^4$};
  		\node (5b) [right of=4b] {};
  		\node (6b) [right of=5b] {$\Z^2$};
  		\node (7b) [below of=4b] {};
  		\node (8b) [right of=7b] {$0$};
  		\node (9b) [right of=8b]{};
  		\draw[->] (2b) to node [swap] {} (4b);
  		\draw[->] (2b) to node [swap] {} (6b);
  		\draw[->] (4b) to node [swap] {} (8b);
  		\draw[->] (6b) to node [swap] {} (8b); 
  		\draw[->] (Xb) to node [swap] {} (2b);
  		
  		\node (Ab) [left of=4b, node distance=1.5cm] {$\check{H}^1(D_{\TS_{2}})\colon$};
		\end{tikzpicture}$$
		
		and degree $1$ quotient cohomology diagrams
	
	$$\begin{tikzpicture}[node distance=1cm, auto]
  		\node (1) {};
  		\node (Y) [below of=1] {};
  		\node (X) [right of=1] {$0$};
  		\node (2) [below of=X] {$\Z$};
  		\node (3) [right of=2] {};
  		\node (4) [below of=Y] {$\Z^3$};
  		\node (5) [right of=4] {};
  		\node (6) [right of=5] {$\Z^3$};
  		\node (7) [below of=4] {};
  		\node (8) [right of=7] {$\Z^6$};
  		\node (9) [right of=8]{};
  		\draw[->] (2) to node [swap] {} (4);
  		\draw[->] (2) to node [swap] {} (6);
  		\draw[->] (4) to node [swap] {} (8);
  		\draw[->] (6) to node [swap] {} (8);
  		\draw[->] (X) to node [swap] {} (2);
  		
  		\node (A) [left of=4, node distance=1.5cm] {$\check{H}^1(D^{\TS_{1}})\colon$};
  		
  		\node (1b) [right of=1, node distance=6cm]{};
  		\node (Yb) [below of=1b] {};
  		\node (Xb) [right of=1b] {$0$};
  		\node (2b) [below of=Xb] {$\Z$};
  		\node (3b) [right of=2b] {};
  		\node (4b) [below of=Yb] {$\Z^2$};
  		\node (5b) [right of=4b] {};
  		\node (6b) [right of=5b] {$\Z^4$};
  		\node (7b) [below of=4b] {};
  		\node (8b) [right of=7b] {$\Z^6$};
  		\node (9b) [right of=8b]{};
  		\draw[->] (2b) to node [swap] {} (4b);
  		\draw[->] (2b) to node [swap] {} (6b);
  		\draw[->] (4b) to node [swap] {} (8b);
  		\draw[->] (6b) to node [swap] {} (8b);
  		\draw[->] (Xb) to node [swap] {} (2b);
  		
  		\node (Ab) [left of=4b, node distance=1.5cm] {$\check{H}^1(D^{\TS_{2}})\colon$};
		\end{tikzpicture}$$
	\end{prop}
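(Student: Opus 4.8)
The plan is to exploit the common gross structure of $\sub_1$ and $\sub_2$. In each case the alphabet splits into an unbarred subalphabet carrying a primitive substitution, a barred subalphabet carrying a primitive substitution, and a single \emph{bridge} letter $X\mapsto 1\overline 0$ which is the only letter whose image mixes the two subalphabets, and which lies in the image of no letter. I would first record that every letter is expanding, so the only bounded legal word is $\epsilon$; hence $B=\{\epsilon\}$, both substitutions are tame, and $N_\sub=1$. For recognisability I would argue aperiodicity directly: a periodic point would have to lie entirely in one of the primitive subsystems (which are non-periodic, hence aperiodic) or else cross the bridge infinitely often, which is impossible because no barred letter maps to an unbarred one, so the bridge is one-way. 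Thus each $\sub_i$ is aperiodic and therefore recognisable, and the machinery of Lemma \ref{LEM:finite-CISs}, Theorem \ref{THM:subcomplex} and Theorem \ref{THM:quotient} applies.

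The crux is to pin down the lattice $\mathcal C$ of CISs and to show it is the same for both substitutions. Writing $\Lambda_u,\Lambda_b$ for the (minimal, primitive) unbarred and barred subshifts and $\Lambda_{ub}:=\Lambda_u\sqcup\Lambda_b$, I claim $\mathcal C=\{\emptyset,\Lambda_u,\Lambda_b,\Lambda_{ub},\TS\}$, the diamond-with-a-stem poset. The hard part, and the one genuine obstacle, is showing there is exactly one bridge orbit, so that no CIS lies strictly between $\Lambda_{ub}$ and $\TS$. For this I would observe that the only legal unbarred--barred junction is $1\overline 0$, since $\sub_i(1)$ ends in $1$ and $\sub_i(\overline 0)$ begins with $\overline 0$ in both examples, while $X$ is the only source of such a junction. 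Consequently $\sub_i^{n+1}(1)$ has $\sub_i^{n}(1)$ as a suffix and $\sub_i^{n+1}(\overline 0)$ has $\sub_i^{n}(\overline 0)$ as a prefix; these nested suffixes and prefixes converge to a unique left-infinite unbarred tail and a unique right-infinite barred tail, so the seam-crossing bi-infinite sequence is unique up to the flow. Hence $\TS=\Lambda_{ub}\sqcup(\text{one arc}\cong\R)$ with the arc dense, forcing the claimed five-element lattice identically for $\sub_1$ and $\sub_2$.

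With the lattice fixed the cohomology is bookkeeping. Each minimal piece is a primitive substitution subshift whose $\check H^1$ is the direct limit of the transposed incidence matrix; a short check shows all four incidence matrices are unimodular (each has determinant $-1$), so $\check H^1$ of each piece is free of rank equal to its number of letters: $(3,3)$ for the two Tribonacci pieces of $\sub_1$ and $(4,2)$ for the Quadibonacci and Fibonacci pieces of $\sub_2$. Since $\Lambda_{ub}$ is a disjoint union, $\check H^1(\Lambda_{ub})=\check H^1(\Lambda_u)\oplus\check H^1(\Lambda_b)=\Z^6$ in both cases. Collapsing $\Lambda_{ub}$ glues the two ends of the single bridge arc to one point, so $\TS/\Lambda_{ub}\simeq S^1$; feeding $\check H^*(S^1)$ and $\tilde H^0(\Lambda_{ub})=\Z$ into the long exact sequence of the pair $(\TS,\Lambda_{ub})$ (with $\check H^{\geq 2}=0$, as every space here is an inverse limit of $1$-complexes) yields $\check H^1(\TS)=\Z^6$ for both, establishing $\check H^*(\TS_{\sub_1})\cong\check H^*(\TS_{\sub_2})$. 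Applying the contravariant functor $\check H^1$ to the inclusion diagram then produces the displayed diagrams, whose middle rows are $(\Z^3,\Z^3)$ for $\sub_1$ and $(\Z^4,\Z^2)$ for $\sub_2$; as no isomorphism of diagrams can match the two middle nodes, the inclusion cohomology diagrams are non-isomorphic.

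For the quotient diagrams I would compute $\check H^1(\TS/\Lambda)$ for each CIS, either directly from Theorem \ref{THM:quotient} as $\varinjlim H^1(\Gamma_N/\Gamma_\Lambda)$ or, more economically, from the long exact sequence of the pair. Because each $\TS/\Lambda$ is an inverse limit of quotients of $1$-complexes, $\tilde H^2(\TS/\Lambda)=0$, so the restriction $\check H^1(\TS)\to\check H^1(\Lambda)$ is surjective and $\tilde H^1(\TS/\Lambda)\cong\Z^{\,6-\operatorname{rk}\check H^1(\Lambda)}$ for each connected $\Lambda$. This gives $\check H^1(\TS/\Lambda_u)$ and $\check H^1(\TS/\Lambda_b)$ equal to $(\Z^3,\Z^3)$ for $\sub_1$ and $(\Z^2,\Z^4)$ for $\sub_2$, together with $\check H^1(\TS/\Lambda_{ub})=\Z$ and $\check H^1(\TS/\TS)=0$, reproducing the displayed quotient diagrams, which again differ at the middle level. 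Everything after the uniqueness-of-bridge-orbit step is routine incidence-matrix and exact-sequence computation, so that step is where I expect the real work to lie.
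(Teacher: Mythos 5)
Your proposal is correct in substance, and it takes a genuinely different route from the paper's computation. The paper's proof (left in the source as an exercise, with the intended computation carried out via the machinery of the preceding sections) works entirely through the $1$-collared AP-complex: one lists the collared alphabet, builds $\Gamma_1$ explicitly, chooses six generating $1$-cycles so that the induced action on $H_1(\Gamma_1)$ is block-diagonal with unimodular blocks of ranks $(3,3)$ for $\sub_1$ and $(4,2)$ for $\sub_2$, identifies each CIS with its subcomplex via Theorem \ref{THM:subcomplex}, and handles the one delicate quotient $\TS/\Lambda_{ub}$---where the reduced exact sequence fails to split because $\tilde{H}^0(\Lambda_{ub})\cong\Z$---via Theorem \ref{THM:quotient}, by observing that $\Gamma_1/\Gamma_{\Lambda_{ub}}$ is a circle on which the bonding map is homotopic to the identity. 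You instead determine the global structure of the subshift by hand: the only mixed admitted words live inside $\sub^k(X)=\sub^{k-1}(1)\sub^{k-1}(\overline{0})$, so the only junction is $1\overline{0}$, no barred-to-unbarred junction is admitted, and the nested suffixes of $\sub^n(1)$ and prefixes of $\sub^n(\overline{0})$ pin down a unique seam orbit; density of that orbit plus minimality of the two pure pieces forces the five-element lattice, and you even get $\TS/\Lambda_{ub}\cong S^1$ on the nose rather than only its cohomology. Your identification of the seam-uniqueness step as the real work is exactly right (one micro-point: dispose of sequences containing the letter $X$ itself---no admitted word of length $\geq 2$ contains $X$, since $X$ is in no image). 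What your route buys is independence from enumerating the collared alphabet and complex; what the paper's route buys is a mechanical procedure that certifies the CIS lattice through Lemma \ref{LEM:finite-CISs} and eventual ranges rather than ad hoc combinatorics, and which generalises to examples where a bare-hands seam analysis is infeasible.

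Two steps need patching, though both are fillable with tools already in the paper. First, your claim that each minimal piece has $\check{H}^1$ equal to the direct limit of the transposed incidence matrix is false for general primitive substitutions (Thue--Morse is the standard counterexample); it holds here because each of the four restricted substitutions is left-proper---every image begins with $0$, respectively $\overline{0}$---which is precisely the argument used in the proof of Proposition \ref{PROP:cohomology-rank}, and you should invoke it. Second, in your long exact sequence computing $\check{H}^1(\TS)$ the relevant segment is $0\to\tilde{H}^0(\Lambda_{ub})\cong\Z\stackrel{\delta}{\to}\tilde{H}^1(\TS/\Lambda_{ub})\cong\Z\to\check{H}^1(\TS)\to\Z^6\to 0$; injectivity of $\delta$ alone only gives $\check{H}^1(\TS)\cong\Z^6\oplus\Z/d$ where $\delta$ is multiplication by $d$, so you must add that $\check{H}^1$ of a one-dimensional tiling space is torsion-free (a direct limit of finitely generated free abelian groups), forcing $|d|=1$. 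Your later surjectivity claims are fine, since $\tilde{H}^2(\TS/\Lambda)=0$ for inverse limits of quotients of $1$-complexes. Trivia: since all letters are expanding, $B=\emptyset$ rather than $\{\epsilon\}$ (still giving $N=1$, matching the paper), and the Fibonacci block has determinant $+1$, not $-1$---harmless, as unimodularity is all you use.
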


\begin{proof}
The proof is left as an exercise.  
\end{proof}

	Hence, $\check{H}^1(D_{\TS})$ and $\check{H}^1(D^{\TS})$ can distinguish tiling spaces which have the same cohomology and lattice structure of CISs.

%
%
%

		\subsection{Discussion}
			\subsubsection{Barge-Diamond Complexes for Non-primitive Substitutions}
	One may ask why we have been using collared Anderson-Putnam complexes and not Barge-Diamond complexes \cite{BD} in the sections focussing on non-minimal substitutions. Indeed, (a slightly modified version of) the BD-complex is a suitable replacement for the $N$-collared AP-complex, and most results from the previous section would hold with very little changed. However, the advantages afforded to the Barge-Diamond method are less apparent when there exist bounded letters in the alphabet. When all letters are expanding and the substitution is aperiodic, a very similar argument to the original proof presented by Barge and Diamond \cite{BD} will carry through, and one can then apply the usual method of replacing the induced substitution on the BD-complex with a homotopic map which is simplicial on the vertex-edges.
	
	When there exist bounded words in the subshift, the usual BD-complex with an $\epsilon$-ball collaring at each point\footnote{See \cite{BDHS:other} for an explanation of what it means to \emph{collar points} in the tiling, instead of collaring tiles.} does not suffice to get the necessary homeomorphism to the inverse limit (for broadly the same reasons that the $1$-collaring does not suffice to induce border-forcing when $B$ is non-empty).

	Instead, the approach that one should take is to collar points with a ball of radius $N-1+\epsilon$ at each point---this is equivalent to replacing the substitution with its $(N-1)$-collared substitution and then using the $\epsilon$-ball collaring on this collared substitution (and so we are using the usual BD-complex $K_{\sub_{N-1}}$ for the collared substitution $\sub_{N-1}$). This has the advantage of needing to collar out one fewer times than in the AP-complex approach. Moreover, we can still replace the induced substitution map with a homotopic map which acts simplicially on the distinguished subcomplex of transition edges. Unlike in the minimal case, it is not necessarily true that $\tilde{H}^0(K_{\sub_{N-1}})$ is trivial, as $\TS_\sub$ may have multiple connected components and so extension problems coming from the Barge-Diamond exact sequence will in general be more difficult.
	
	To illustrate this alternative method, we present a brief example calculation of cohomology for the Chacon substitution.
	\begin{example}
	Let $\sub$ be given by $\sub \colon a \mapsto aaba,\: b \mapsto b$, the Chacon substitution on the alphabet $\{a, b\}$. Let $$1 = a_{aaa},\: 2 = a_{aab},\: 3 = b_{aba},\: 4 = a_{bab},\: 5 = a_{baa}.$$ The $1$-collared substitution is given by $$\sub_1 \colon 1 \mapsto 1235,\: 2 \mapsto 1234,\: 3 \mapsto 3,\: 4 \mapsto 5234,:\ 5 \mapsto 5325$$ and the BD-complex is given in Figure \ref{fig:chacon-bd}.
	\begin{figure}[h]
	 $$\begin{tikzpicture}[->, node distance=2cm, auto] 
\node [fill,circle,draw,inner sep = 0pt, outer sep = 0pt, minimum size=1mm] (ai) at (2.000000,0.000000) {};
\node [fill,circle,draw,inner sep = 0pt, outer sep = 0pt, minimum size=1mm] (ao) at (1.618173,1.175379) {};
\node [fill,circle,draw,inner sep = 0pt, outer sep = 0pt, minimum size=1mm] (bi) at (0.618485,1.901966) {};
\node [fill,circle,draw,inner sep = 0pt, outer sep = 0pt, minimum size=1mm] (bo) at (-0.617358,1.902333) {};
\node [fill,circle,draw,inner sep = 0pt, outer sep = 0pt, minimum size=1mm] (ci) at (-1.617476,1.176338) {};
\node [fill,circle,draw,inner sep = 0pt, outer sep = 0pt, minimum size=1mm] (co) at (-2.000000,0.001185) {};
\node [fill,circle,draw,inner sep = 0pt, outer sep = 0pt, minimum size=1mm] (di) at (-1.618870,-1.174419) {};
\node [fill,circle,draw,inner sep = 0pt, outer sep = 0pt, minimum size=1mm] (do) at (-0.619612,-1.901600) {};
\node [fill,circle,draw,inner sep = 0pt, outer sep = 0pt, minimum size=1mm] (ei) at (0.616230,-1.902698) {};
\node [fill,circle,draw,inner sep = 0pt, outer sep = 0pt, minimum size=1mm] (eo) at (1.616779,-1.177296) {};
\draw (ai) edge[bend right=110, looseness=3, ->, thick, swap] node {$1$}(ao);
\draw (bi) edge[bend right=110, looseness=3, ->, thick, swap] node {$2$}(bo);
\draw (ci) edge[bend right=110, looseness=3, ->, thick, swap] node {$3$}(co);
\draw (di) edge[bend right=110, looseness=3, ->, thick, swap] node {$4$}(do);
\draw (ei) edge[bend right=110, looseness=3, ->, thick, swap] node {$5$}(eo);
\draw [-,thick, bend right, draw=white, line width=4pt, looseness=0.7] (ao) to (bi);
\draw [->, thick, bend right, looseness=0.7] (ao) to (bi);
\draw [-,thick, bend right, draw=white, line width=4pt, looseness=0.7] (bo) to (ci);
\draw [->, thick, bend right, looseness=0.7] (bo) to (ci);
\draw [-,thick, bend right, draw=white, line width=4pt, looseness=0.7] (co) to (di);
\draw [->, thick, bend right, looseness=0.7] (co) to (di);
\draw [->, thick, bend right, looseness=0.7, draw=red] (co) to (ei);
\draw [-,thick, bend right, draw=white, line width=7pt, looseness=0.7] (do) to (ci);
\draw [->, thick, bend right, looseness=0.7, draw=red] (do) to (ci);
\draw [-,thick, bend right, draw=white, line width=4pt, looseness=0.7] (eo) to (ai);
\draw [->, thick, bend right, looseness=0.7, draw=red] (eo) to (ai);
\draw [->, thick, bend right, looseness=0.7] (eo) to (bi);
 \end{tikzpicture} $$
 
 \caption{The Barge-Diamond complex $K_{\sub_1}$ for the $1$-collared Chacon substitution with the subcomplex of transition edges in the eventual range coloured red}
 \label{fig:chacon-bd}
 \end{figure}
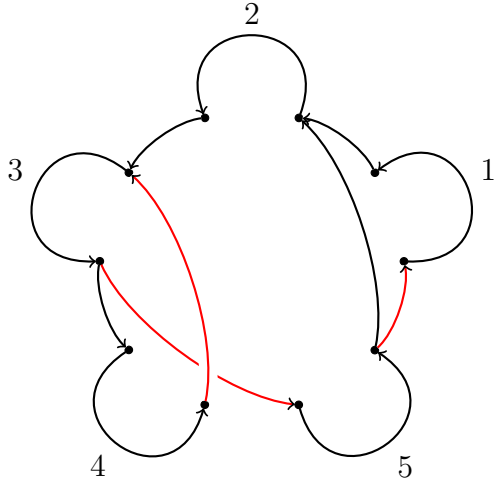
	
	The eventual range of the map $g$ acting on the subcomplex $S$ of transition edges is the collection $\{e_{35}, e_{43}, e_{51}\}$ coloured in red. The substitution acts on this eventual range like the identity. Note that $S$ has exactly three connected components, all of which are contractible. It follows that the Barge-Diamond exact sequence for this substitution is given by $$0 \to \Z^2 \to \varinjlim \left(\Z^5, \left(
	\begin{smallmatrix}
	1&1&1&0&1\\
	1&1&1&1&0\\
	0&0&1&0&0\\
	0&1&1&1&1\\
	0&1&1&0&2
	\end{smallmatrix}
	\right)\right) \to \check{H}^1(\TS) \to 0 \to 0$$
	\end{example}
	Experience with examples seems to suggest that it is often the case that the eventual range of $S$ under the induced substitution will often have multiple connected components whenever $N > 1$ and especially when $\sub$ is not minimal, and so we seem to lose the advantage normally afforded to us with Barge-Diamond calculation where it is often the case that the exact sequence splits. In fact, it is probably more efficient in the above example to directly find generators of the cohomology of the entire complex $K_{\sub_1}$ (where in this case there are only three generators) and to calculate the induced substitution on $H^1(K_{\sub_1})$ in order to calculate $\check{H}^1(\TS)$. If we do that, we find that $\check{H}^1(\TS) \cong \varinjlim \left(\Z^3, \left(\begin{smallmatrix}0&1&0\\-1&3&1\\-1&1&1\end{smallmatrix}\right)\right)$.

	\subsubsection{Extensions of Substitutions by Other Substitutions}
		So far, our only examples of non-minimal substitutions that have been presented have been relatively tame---the tiling spaces have all been a finite collection of minimal tiling spaces which are possibly connected by a finite number of path components which asymptotically approach some sub-collection of the minimal sets. In particular, by quotienting out by the disjoint union of the minimal sets, we are left with a space homeomorphic to a cell complex. While these spaces are interesting, and serve as good test cases for our machinery, the range of possible behaviours for non-minimal substitutions is much more varied.
		
		For instance, we could break the asymptotic behaviour in the above described examples, and instead have new path components which approach minimal sets proximally, instead of asymptotically. 

\begin{example}\label{ex:fib-proximal}
Consider the substitution $$\sub \colon 0 \mapsto 001,\: 1 \mapsto 01,\: 2\mapsto X021X,\: X \mapsto X$$ whose proximal path component is the orbit of the word $$\ldots 0010010100101X00100101X001X021X01X00101X00100101001001 \ldots$$ where the sparse appearances of the symbols $X$, which become more rare the further one travels from the single $2$, serve to break the asymptotic nature of the handle.
There is also a single asymptotic handle associated to the bi-infinite word $$\ldots 0010010100101X00100101001001 \ldots.$$
The lattice of CISs for this substitution is $\emptyset \to \Lambda_{Fib} \to \Lambda_{Fib+1} \to \TS$ where $\Lambda_{Fib}$ is a Fibonacci tiling space, $\Lambda_{Fib+1}$ includes the asymptotic handle and $\TS$ is the full tiling space which includes the proximal handle.

The inclusion cohomology diagram in degree $1$ is given by
$H^1(D_{\TS})\colon \: \Z^4 \to \Z^3 \to \Z^2 \to 0$.
\end{example}

		To support this direction of exploring more varied behaviour, we introduce a curious family of examples where the quotients of the tiling space by the CISs are of particular interest, and where there is a natural factor map onto the minimal set of the tiling space. In particular the complement $\TS \setminus \Lambda_{\operatorname{min}}$ will often have uncountably many path components, where $\Lambda_{\operatorname{min}}$ is the disjoint union of the minimal CISs. One might think of this construction as `extending' one substitution by another in a proximal fashion. 

Suppose that $\sub$ and $\psi$ are substitutions on $\Al$ and $\Be$ respectively, with $\sub$ primitive and suppose that $|\Be| \leq |\Al|$. Let $i \colon \Be \to \Al$ be an injection. Assume that for each $b \in \Be$, if $\psi(b) = b_1 \ldots b_n$, then there exists an interior subsequence $(a_{k_1},\ldots, a_{k_n})$ of $\sub(i(b)) = a_1 \ldots a_m$ of the form $(i(b_1), \ldots, i(b_n))$ (if not, take a high enough power of $\sub$ so that there is). Here by interior, we mean that $a_{k_1} \neq a_1$ and $a_{k_n} \neq a_m$.

\begin{defn}
Let $\sub$ and $\psi$ be as above and choose an injection $i\colon \Be \to \Al$ and a set of subsequences $S = \{s_{b} = (a_{k_1},\ldots, a_{k_n}) \mid b \in \Be\}$ of $\sub(i(b))$ as above.

Define a new substitution $[\sub,\psi]_S$ on the alphabet $\Al \sqcup \Be$ by $[\sub,\psi]_S(a) = \sub(a)$ for all $a \in \Al$ and for $b \in \Be$ by $[\sub,\psi]_S(b) = \sub(i(b))$ except replace the occurrence of $a_{k_j}$ with $b_j$.
\end{defn}
There is a natural factor map $\TS_{[\sub,\psi]_S} \to \TS_\sub$ given by mapping the letters $b \in \Be$ to $i(b)$.

\begin{example}\label{EX:solenoid}
If $\sub \colon 0 \mapsto 00100101,\: 1 \mapsto 00101$ and $\psi \colon a \mapsto aa$, then we could choose the injection $a \mapsto 0$ and then choose as the subsequence of $\sub(i(a)) = 0_{(1)} 0_{(2)} 1_{(3)} 0_{(4)} 0_{(5)} 1_{(6)} 0_{(7)} 1_{(8)}$ the sequence $(0_{(4)}, 0_{(5)})$ so $S = \{(0_{(4)},0_{(5)})\}$. Then our extended substitution $[\sub,\psi]_S$ is given by $$[\sub,\psi]_S \colon 0 \mapsto 00100101,\: 1\mapsto 00101,\: a \mapsto 001aa101.$$
\end{example}

\begin{example}
Let $\psi = \operatorname{Id}$ be the substitution on the alphabet $\{x\}$ given by $\operatorname{Id}(x) = x$, and let $i \colon \{x\} \to \Al$ be given by $i(x)=a$ for some $a \in \Al$. As $\sub$ is primitive by assumption, let $a_{k_1}$ be an occurrence of the letter $a$ in the interior of the word $\sub^n(a)$ for some positive natural $n$. Let $S=\{(a_{k_1})\}$.

The substitution $[\sub,\operatorname{Id}]_S$ is just the substitution $\sub$ with a single handle. That is, the tiling space for $[\sub,\operatorname{Id}]_S$ is just the tiling space for $\sub$ with a single extra one-dimensional path component which asymptotically approaches the minimal component in both directions. The image of the handle under the factor map onto $\TS_\sub$ is precisely the orbit of the limit word $\lim_{j \to \infty}\sub^{jn}(a)$ expanded about the interior letter $a_{k_1}$ appearing in $\sub^n(a)$. By iterating this method, we can add as many handles as we like.
\end{example}

In general, the substitution tiling space $\TS_{[\sub,\psi]_S}$ has exactly one non-empty proper CIS which is exactly the tiling space $\TS_\sub$ given by restriction of the substitution to the subalphabet $\Al$.

There is a close relationship between the quotient complex $\Gamma_N/\Gamma_{\TS_\sub}$ and the AP-complex $\Gamma_\psi$ of the substitution $\psi$. Let $f \colon \Gamma_N/\Gamma_{\TS_\sub} \to \Gamma_N/\Gamma_{\TS_\sub}$ and $g \colon \Gamma_{\psi} \to \Gamma_{\psi}$ be the respective bonding maps. It would appear that more often than not there is a map $h \colon \Gamma_N/\Gamma_{\TS_\sub} \to \Gamma_\psi$ which conjugates these bonding maps up to homotopy, that is $g \circ h \simeq h \circ f$. This would seem to suggest a close relationship between the spaces $\TS_{[\sub,\psi]_S}/\TS_\sub$ and $\TS_\psi$, perhaps up to shape equivalence\footnote{For an introduction and overview of the r\^{o}le of shape theory in the study of tiling spaces, we refer the reader to \cite{CH:codim-one-attractors}}.

\begin{question}
What is the relationship between $\TS_{[\sub,\psi]_S}/\TS_\sub$ and $\TS_\psi$?  
\end{question}

The importance of the choice of the set of subsequences $S$ in the construction of $[\sub,\psi]_S$ is not immediately apparent. It seems unlikely that the resulting tiling space is independent of the choice of $S$. By taking powers of $\sub$, one can generate infinitely many distinct such choices. By construction, the inclusion and quotient cohomology diagrams of these spaces will all be very similar (if not identical), and so a stronger invariant is likely needed to distinguish such substitutions topologically.

\begin{question}
Does there exist a pair of substitution $\sub,\psi$ and sets of subsequences $S, S'$ such that $\TS_{[\sub,\psi]_S}$ and $\TS_{[\sub,\psi]_{S'}}$ are not homeomorphic? If such behaviour is typical, what tools are needed to topologically or dynamically distinguish such pairs of spaces in general?
\end{question}

\begin{example}\label{EX:proximal}
	In \cite{BD:proximal}, Barge and Diamond outline a method for associating, to a primitive aperiodic substitution $\sub$, a new substitution $\tilde{\sub}$ which is non-minimal.  They show that the homeomorphism type of the tiling space $\TS_{\tilde{\sub}}$ is a homeomorphism invariant of the tiling space $\TS_\sub$, and so the cohomology $\check{H}^i(\TS_{\tilde{\sub}})$ is also a topological invariant for $\TS_\sub$. The method for forming the substitution $\tilde{\sub}$ from the so-called \emph{balanced pairs of words associated to pairs of asymptotic composants} is involved, and it would be cumbersome to reproduce the construction here, so the reader is referred to the paper \cite{BD:proximal}.
	
	Using this construction, it can be shown that given the Fibonacci substitution $\sub_{Fib} \colon 0\mapsto 001,\: 1\mapsto 01$, the associated substitution $\tilde{\sub}_{Fib}$ is given by $\tilde{\sub}_{Fib} \colon a \mapsto aab,\: b \mapsto ab,\: c \mapsto acab$. The tiling space of this substitution is orbit equivalent to a Fibonacci with one handle substitution $[\sub_{Fib},\operatorname{Id}]_S$ (the equivalence is given by the single $c$ tile absorbing the $a$ tile to its right).
	\end{example}

	\begin{example}\label{EX:asymptotic}
	Considering the substitutions
	$$
	\begin{array}{rlll}
	\sub_1 \colon & a \mapsto cab & b \mapsto ac & c \mapsto a\\
	\sub_2 \colon & a \mapsto bbac & b \mapsto a & c \mapsto b.
	\end{array}
	$$
	It is an exercise for the reader to check that we have cohomology groups $\check{H}^1(\TS_{\sub_1}) \cong \check{H}^1(\TS_{\sub_2}) \cong \Z^5$. So, cohomology does not distinguish the tiling spaces of these two substitutions. It is also the case that several other invariants of primitive substitution tiling spaces fail to distinguish these substitutions. We can instead form the two new substitutions $\tilde{\sub}_1, \tilde{\sub}_2$. We omit the specific presentations of these substitutions owing to their extremely large size---$\tilde{\sub_1}$ has an alphabet on 19 letters, $\tilde{\sub_2}$ has an alphabet on 87 letters.
	
	Using the results of this work, we can calculate that $\operatorname{rk}\check{H}^1(\TS_{\tilde{\sub}_1}) = 17$ and $68 \leq \operatorname{rk}\check{H}^1(\TS_{\tilde{\sub}_2}) \leq 74$ and so by the result of Barge and Diamond, these invariants distinguish the substitutions $\sub_1$ and $\sub_2$. Hence we have $\TS_{\sub_1} \not\cong \TS_{\sub_2}$.
	
	\emph{Acknowledgement.} The authors thank Scott Balchin for writing a computer program to determine the substitutions $\tilde{\sub}_1$ and $\tilde{\sub}_2$ after it became apparent that hand calculations would not be feasible in a reasonable amount of time.
\end{example}


\bibliographystyle{abbrv}
\bibliography{bib-primitivity2}

\end{document}